\documentclass{article}
\usepackage{amssymb, mathtools,amsthm,microtype}
\usepackage{marginnote}
\usepackage[marginpar=5cm,top=2cm,bottom=3cm, a4paper]{geometry}
\usepackage{xcolor}
\colorlet{darkblue}{blue!50!black}
\usepackage[colorlinks,citecolor=darkblue,linkcolor=darkblue,urlcolor=darkblue]{hyperref}
\usepackage[utf8]{inputenc}
\usepackage[]{natbib}
\usepackage[T1]{fontenc}
\usepackage{subcaption} 
\usepackage{comment}
\usepackage{authblk}
\usepackage{caption} 

\renewcommand{\leq}{\leqslant}
\renewcommand{\geq}{\geqslant}
\renewcommand{\epsilon}{\varepsilon}

\newcommand{\Ac}{\mathcal A}
\newcommand{\Bc}{\mathcal B}
\newcommand{\C}{ C}
\newcommand{\cou}{\Psi}
\newcommand{\cdecou}{C_{n}}

\newcommand{\ccou}{C_{n,T}}
\renewcommand{\d}{\mathrm d}
\newcommand{\dlp}{d_{\mathrm{LP}}}
\newcommand{\dtv}{d_{\mathrm{TV}}}
\newcommand{\decou}{\Phi}

\newcommand{\IDV}{I_{\mathrm{DV}}}
\newcommand{\IRL}{I_{\mathrm{RL}}}

\newcommand{\Jc}{\mathcal J}
\newcommand{\leb}{\mathrm {Leb} }
\newcommand{\LP}{\mathrm{LP}}

\newcommand{\N}{\mathbb N}
\renewcommand{\P}{\mathbb P}
\newcommand{\Pc}{\mathcal P}
\newcommand{\R}{\mathbb R}
\newcommand{\Rxinit}{\R^d \cup\{\xinit\}}
\newcommand{\Sc}{\mathcal S}
\newcommand{\sli}{F}
\newcommand{\sti}{G}
\DeclareMathOperator{\supp}{\mathrm{supp}}
\newcommand{\TV}{{\mathrm {TV}}}
\newcommand{\W}{\widetilde{\mathcal{W}}}
\newcommand{\Wc}{\mathcal W}
\newcommand{\xinit}{x_{\mathrm{init}}}

\newtheorem{hyp}{Assumption}

\newtheorem{hypU}{Assumption}

\newtheorem{theo}{Theorem}[section]
\newtheorem{prop}[theo]{Proposition}
\newtheorem{defi}[theo]{Definition}
\newtheorem{lem}[theo]{Lemma}
\newtheorem{coro}[theo]{Corollary}

\newtheoremstyle{exstyle}{5pt}{10pt}{\rmfamily}{}{\bf}{.}{ }{}
\theoremstyle{exstyle}
\newtheorem{example}[theo]{Example}

\numberwithin{equation}{section}
\mathtoolsset{showonlyrefs}

\title{Large deviations for non-irreducible Markov chains on Euclidean spaces}
\author[1]{L\'eo Daures\thanks{Corresponding author: daures@lpsm.paris}}
\date{}
\affil[1]{\small Université Paris Cité and Sorbonne Université, CNRS, Laboratoire de Probabilités, Statistique et Modélisation, Paris, France}
\begin{document}
	\maketitle
	\begin{abstract}
		We establish the weak large deviation principle for empirical measures of Markov chains on $\mathbb{R}^d$ under mild assumptions.
		In particular, no irreducibility is assumed and the initial measure may be arbitrary. 
		The proof is entirely self-contained and relies on subadditivity.
		In the absence of irreducibility, examples show that the rate function is not convex in general.
	\end{abstract}
	\vspace{1cm}	
	\noindent\textbf{Keywords:} Large deviations; Markov chains
	
	\noindent\textbf{MSC Classification:} 60F10; 60J10
\tableofcontents
\section{Introduction}
Let $(X_n)_{n\in \N}$ be a Markov chain on $\R^d$, $d\geq 1$, associated with a stochastic kernel $p$ and an initial probability measure $\beta$. The \emph{empirical measure} of $(X_n)$ up to time $n$ is the random probability measure 
\begin{equation*}
	L_n:=\frac 1n\sum_{i=1}^n\delta_{X_i},
	\qquad n\geq 1,
\end{equation*}
where $\delta_x$ denotes the Dirac measure at $x$.
Our goal is to study the large deviations of the sequence $(L_n)$.
A large deviation principle (LDP) is the statement of the exponential decay of some probabilities involving $L_n$.

\begin{defi}
	Let $\Pc(\R^d)$ denote the set of probability measures equipped with its weak topology and the corresponding Borel $\sigma$-algebra.
	We say that $(L_n)$ satisfies the full LDP (resp. weak LDP) in the weak topology if there exists a lower semicontinuous function $I$, called the rate function, such that all open subsets $O$ and all closed subsets $F$ (resp. compact subsets $F$) of $\Pc(\R^d)$ satisfy
	\begin{align}
		\liminf_{n\to \infty}\frac 1n\log \mathbb P(L_n\in O)&\geq -\inf_{\mu\in O}I(\mu),
		\\\limsup_{n\to \infty}\frac 1n\log \mathbb P(L_n\in F)&\leq -\inf_{\mu\in F}I(\mu).
	\end{align}
\end{defi}
The question of the large deviations of $(L_n)$ was first explored in a series of seminal articles by Donsker and Varadhan in the 1970s \cite{DV1,DV2,DV3}.
Since then, a vast literature has been dedicated to establishing this LDP under various assumptions, and computing the rate function;
see the notes of \cite[Chap. 6]{DZ} or  \cite{deacosta2022} for a historical review.
In the literature, most works study the large deviations under two assumptions: one of exponential tightness and one of irreducibility of the Markov chain.

Exponential tightness is the assumption that for all $M>0$, there exists a compact subset $K$ of $\Pc(\R^d)$ such that 
\begin{equation*}
	\limsup_{n\to \infty}\frac 1n\log \P(L_n\notin K)\leq -M.
\end{equation*}
Exponential tightness is often not assumed directly, but rather as the consequence of a broader assumption: for instance, compactness of the state space in \cite{DV1}, Hypothesis (H*) in \cite{DV3} or Assumption~(U), whose definition is recalled below, in \cite{stroock1984,ellis1988,DZ}.
Usually, the primary purpose of exponential tightness is to deduce the full LDP from the weak one: one would prove the weak LDP (without relying on exponential tightness) and then enhance it to the full LDP by the use of an exponential tightness assumption.\footnote
{In fact, the weak LDP has long been mostly used as a step in deriving the full LDP. This is for instance the historical method of \cite{DV3}. {While the term ``weak LDP'' had not been coined yet, the upper bound is indeed proved only for compact sets before exponential tightness is used.}} However, in some cases (which are not exponentially tight \emph{e.g.} the simple random walk on $\mathbb Z$), the full LDP fails while the weak one holds \cite{famex}; in these cases, the weak LDP is the best achievable result and provides valuable information on the decay of probabilities. In the present article, we focus on the weak LDP. As a consequence, we never address the questions of exponential tightness.

The second assumption is that of irreducibility. This assumption can take many different forms.
One classical irreducibility assumption is that of $\psi$-irreducibility \cite{neynummelin1987,deacosta1988,deacosta2022}. Assumption~(U) from \cite{stroock1984,ellis1988,DZ} also works as an irreducibility assumption.
However, not all Markov chains are irreducible, whatever definition of irreducibility is adopted. The simplest examples are of course discrete \cite{dinwoodie1993,deacosta2022,daures2025}, but non-irreducibility is also common for continuous Markov chains.
In particular, non-irreducible Markov chains arise from applied competitive models, where several species (or players) may eventually go extinct (or bankrupt); we present a stochastic version of the Lotka-Volterra competitive system in Example~\ref{ex: lotka volterra}. 
Non-irreducibility can also arise in a perturbed dynamical system; see Example \ref{ex: one dimensional}.
Very few theoretical results exist in the literature to handle large deviations for such non-irreducible Markov chains. 
Recently, the assumption of irreducibility has been removed from statements of the weak LDP in the discrete case \cite{wu2005,rassoul,daures2025}, but the question remains open in the continuous case. 

In the present article, we prove that $(L_n)$ satisfies the weak LDP without any irreducibility condition or exponential tightness assumption. 
In essence, we prove that, if the initial measure $\beta$ and all measures $p(x,\cdot)$ have densities that are both lower semicontinuous and bounded on $\R^d$, uniformly in $x\in \R^d$, and if a few additional mild assumptions are satisfied, then $(L_n)$ satisfies the weak LDP. We identify a set $\Ac\subseteq \Pc(\R^d)$ outside of which the rate function $I$ is infinite, and provide convexity properties of $I$ over $\Ac$. The rigorous statements are Theorems~\ref{theo: main result} and~\ref{theo: main result (simplified)}.

The proof of Theorems~\ref{theo: main result} and~\ref{theo: main result (simplified)} is based on the \emph{subadditive method}. 
This technique consists in applying a version of the subadditive lemma (see \cite[Lemma 6.1.11]{DZ}) in order to prove the existence of the \emph{Ruelle-Lanford function}, which, in turn, implies the desired weak LDP.
The subadditive method originates from early work \cite{azencott,bahadurzabell1979} on i.i.d.~sequences and was later adapted for Markov chains \cite{stroock1984,ellis1988,DZ}, under an irreducibility assumption: this is again the uniformity Assumption~\ref{assumption U}, which we recall for later discussion but will never use in the present article.
\begin{hypU}
	\label{assumption U}
	There exist a constant $c$ and integers $k\leq \tau$ such that
	\begin{equation}
		\label{eq: assumption U}
		\frac c\tau\sum_{i=1}^\tau p^i(x_1,\cdot)\geq p^k(x_2,\cdot), \qquad x_1,x_2\in \R^d.
	\end{equation}
\end{hypU}
\noindent 
Assumption~\ref{assumption U} imposes conditions on the irreducibility of the Markov chain that are so strong that they guarantee not only the LDP\footnote{The LDP obtained under~\ref{assumption U} is the full LDP, since~\ref{assumption U} also implies exponential tightness.} of $(L_n)$ but also its uniformity with respect to the initial measure.
Such strong conditions are not satisfied in general; see Appendix~\ref{section: examples}.
However, subadditive arguments can still be used without~\ref{assumption U}, at the cost of a weaker result.
Recently, \cite{daures2025} proposed adaptations to the subadditive method to prove that, if the state space is discrete, then $(L_n)$ satisfies the weak LDP, without any assumptions other than discreteness.
In the present article, we follow a similar route to prove the weak LDP in the case of continuous state space.
The arguments presented here would continue to work if the state space were discrete and the Lebesgue measure were replaced by the counting measure, effectively providing an alternative formulation of the subadditive proof of the weak LDP of \cite{daures2025}. We chose not to include this generalization and to fix the reference space to $(\R^d,\leb)$ instead.

The subadditive method is often observed to produce a convex rate function. Nevertheless, in the present article, we find the rate function not to be generally convex (see Theorem~\ref{theo: main result} and Example~\ref{ex: lotka volterra}). This possible lack of convexity is related to the lack of irreducibility of the Markov chain. 
To our knowledge, the present article and \cite{daures2025} are the only two successful attempts at using the subadditive method to derive an LDP that has a non-convex rate function.

The article is structured as follows. In the rest of the present section, we introduce our notations and assumptions. We also define the admissible measures and state the main theorem of the paper in Section~\ref{section: mai results}. 
In Section~\ref{section: Ruelle Lanford}, we present the Ruelle-Lanford function. We show that the main theorem follows immediately from the existence of the Ruelle-Lanford function and its properties. The existence of the Ruelle-Lanford function at non-admissible measures is proved in Section~\ref{section: non admissible}. The case of admissible measures requires more involved work. In Section~\ref{section: slicing and stitching}, we present the tools for this work: the slicing, stitching and coupling maps. They are manipulations of trajectories that separate and reassemble portions of trajectories. In Section~\ref{section: coupling and decoupling}, we use the tools from Section~\ref{section: slicing and stitching} to prove the existence of the Ruelle-Lanford function for admissible measures. 
As in other uses of the subadditive method, a property of convexity of the rate function\footnote{We do not prove that the rate function is convex~---~it may not be, in view of the discussion above and Example~\ref{ex: lotka volterra} for instance. However, it still satisfies a convexity inequality between some properly chosen points.}
 should come with the proof of the existence of the Ruelle-Lanford function.
The proof of the weak LDP is complete at the end of Section~\ref{section: coupling and decoupling}.
Finally, in Section~\ref{section: decoupling}, we use the slicing and stitching maps from Section~\ref{section: slicing and stitching} again to prove an additional property of the rate function, which completes the main theorem.

\subsection{Notation}
\label{section: notations}
\subsubsection{Markov chain and words}
We will need some notation in the following.
To begin with, we will consider an additional state $\xinit\notin \R^d$ and set $p(\xinit, \cdot)=\beta$, so that the Markov chain of kernel $p$ that starts at $\xinit$ at time $n=0$ behaves exactly like the Markov chain of kernel $p$ that starts following $\beta$ at time $n=1$. We will not distinguish between the two; this alternative way to see the initial measure will be convenient for later notation and definitions.

We will also denote bits of trajectories of $(X_n)$ by \emph{words}.
In the following, $\Wc_n$ denotes the set of words of length $n$, that is $(\R^d)^n$. The length of a word is denoted by $|u|$. The empty word is denoted by $e$. The set of all (finite) words is $\Wc:= \bigcup_{n\geq 0}\Wc_n$, and the set of all words of length at most $n$ is $\Wc_{\leq n}:= \bigcup_{0\leq k\leq n}\Wc_k$. The $i$-th letter of a word $u$ is denoted by $u_i$. We denote the last letter of $u$ by $u_{-1}=u_{|u|}$.

We equip $\Wc$ (resp. $\Wc_n$) with its Borel $\sigma$-algebra $\Bc(\Wc)$ (resp. $\Bc(\Wc_n)$).
For all $n\in \N$ and given any $x\in \Rxinit$, we define the probability measure $p(x,\cdot)$ over $(\Wc_n,\Bc(\Wc_n))$ by
\begin{align}
	\label{eq: p on words}
	p(x,\d u)&:=p(x,\d u_1)\prod_{i=1}^{n-1}p(u_i,\d u _{i+1}).
\end{align}
For notational convenience, we also set $p(x,\cdot )=\delta_e$ on $\Wc_0=\{e\}$.\footnote{This convention will be very useful in the proof of Proposition~\ref{prop: proba ineq coupling map}.} 
In the special case $x=\xinit$, we write 
\begin{equation*}
	\P(\d u):=
	p(\xinit,\d u).
\end{equation*}
By extension, $\P$ is the law of the Markov chain $(X_n)$.

It will be useful to consider the set of words of $\Wc$ that contribute to the support of $\mathbb P$: the definition of such a set $\W$ is given in Section~\ref{section: kernel and densities}.
\subsubsection{Kernels and densities}
\label{section: kernel and densities}
In the following, being able to manipulate densities instead of the measures $p(x,\cdot)$, $p^k(x,\cdot)$ and $\tilde p(x,\cdot)$ will be a great help. Throughout the rest of this article, we assume that for all $x\in \Rxinit$, the measure $p(x,\cdot)$ is absolutely continuous with respect to the Lebesgue measure on $\R^d$. 
This is the only ambient assumption of the present paper.
By Lemma~\ref{lem: measurable density} in the appendix, there exists a measurable\footnote{While the measurability of the function $\rho(x,\cdot)$ for all $x$ is automatic, the joint measurability of $\rho$ is a genuinely stronger property whose validity requires a technical argument, that is, Lemma~\ref{lem: measurable density}.} function $\rho:(\Rxinit)\times \R^d\to [0,\infty)$ such that
\begin{equation*}
	p(x,A)=\int_A\rho(x,y)\leb (\d y),\qquad x\in \Rxinit,\ A\subseteq \R^d\ \hbox{a Borel set}.
\end{equation*} 
Here, $\leb(\cdot)$ denotes the $d$-dimensional Lebesgue measure.
It is possible to extend the definition of $\rho$ to $\Wc$. For all $x\in \Rxinit$ and $n\geq 1$, the function
\begin{equation*}
	u\mapsto \rho(x,u):=\rho(x,u_1)\prod_{i=1}^{|u|-1}\rho(u_i,u_{i+1})
\end{equation*}
is the density of $p(x,\cdot )$ with respect to the Lebesgue measure on $\Wc_n$ (which, by definition of $\Wc_n$, is simply the Lebesgue measure of $\R^{dn}$, still denoted by $\leb(\cdot)$). 
We let $\W$ denote the set of $u\in \Wc$ such that $\rho(\xinit,u)>0$. The definitions of $\W_n$ and $\W_{\leq n}$ follow similarly.

Let $k\geq 1$. The definition of $p(x,\cdot)$ over $\Wc_k$ allows us to define the iterated kernels $p^k$ and $\tilde p$ by $p^1=p$ and, for $k\geq 2$,
\begin{equation*}
	\begin{split}
	p^k(x,\d y)&=\int_{\Wc_{k-1}}p(\xi_{k-1},\d y)p(x,\d \xi),
	\\
	\tilde p(x,\d y)&=\sum_{k=1}^\infty2^{-k}p^k(x,\d y),
	\qquad \qquad \qquad x\in \Rxinit.
	\end{split}
\end{equation*}
We can easily extend the definition of $p^k(x,\cdot)$ as a probability measure on $\Wc_n$ by replacing $p(x,\d u_1)$ by $p^k(x,\d u_1)$ in \eqref{eq: p on words}. We also set $p^k(x,\cdot)=\delta_e$ on $\Wc_0$ for all $k$.
The kernels $p^k$ and $\tilde p$ have densities; for a fixed $x\in \Rxinit$,
the measurable functions
\begin{equation*}
	\begin{split}
		y\mapsto\rho^k(x,y):=\int_{\Wc_{k-1}}p(x,\d \xi)\rho(\xi_{k-1},y)
		, \qquad 
		y\mapsto \tilde \rho(x,y):=\sum_{k=1}^\infty 2^{-k}\rho ^k(x,y),
	\end{split}
\end{equation*} 
are the respective densities of the measures $p^k(x,\cdot)$ and $\tilde p (x,\cdot)$ with respect to the Lebesgue measure on $\R^d$. Replacing $y$ by some $u\in \Wc_n$ in the definition of $\rho^k$ defines $\rho^k(x,\cdot)$ as the density of $p ^k(x,\cdot)\in \Pc(\W_n)$.

\subsubsection{Communicating classes}
\label{section: classes}
We define communicating classes. This construction is possible under the mere existence of $\tilde \rho$, but its relevance will become clear with Assumption~\ref{hyp: pseudo lsc density} stated in Section~\ref{section: assumptions} below.

We call communicating class every maximal set $A$ such that $\forall x,y\in A,\ \tilde \rho(x,y)>0$.\footnote{Such maximal sets are well defined thanks to Zorn's Lemma.}
By definition, an element $x\in \R^d$ belongs to a communicating class if and only if $\tilde \rho(x,x)>0$.
We denote by $(C_j)_{j\in \Jc}$ the family of all communicating classes and we set
\begin{equation}
	\label{eq: C}
	C=\bigcup_{j\in \Jc}C_j.
\end{equation}
Under Assumption~\ref{hyp: pseudo lsc density}, the sets $C_j$ are open and do not overlap, hence the index set $\mathcal J$ is finite or countable.
We define the relation $(\leadsto)$ on $\Jc$ by
\begin{equation*}
	j_1\leadsto j_2\ \Leftrightarrow \ \forall x\in C_{j_1},\  \tilde p(x,C_{j_2})>0.
\end{equation*}
Under Assumption~\ref{hyp: pseudo lsc density}, this relation is a partial order on $\Jc$. 
By Lemma~\ref{lem: positive density}, the definition of $(\leadsto)$ is equivalent to
\begin{equation*}
	j_1\leadsto j_2\ \Leftrightarrow \ \exists x\in C_{j_1},\  \tilde p(x,C_{j_2})>0.
\end{equation*}
In addition, we write $\beta\leadsto j$ if $\tilde p(\xinit, C_j)>0$ for notational convenience.

\subsection{Assumptions}
\label{section: assumptions}
\subsubsection{Semicontinuity}
\begin{hyp}
	\label{hyp: pseudo lsc density}
	{The set} $\rho^{-1}((0,\infty))$
	is an open set of $(\Rxinit)\times \R^d$.
\end{hyp}
Of course, Assumption~\ref{hyp: pseudo lsc density} is a consequence of the simpler assumption that $\rho$ is lower semicontinuous over $(\Rxinit) \times \R^d$.
In the present article, we only use the semicontinuity assumption when establishing other assumptions. Instead, the results of the article are stated under~\ref{hyp: pseudo lsc density}. We now state some convenient properties of transitivity under~\ref{hyp: pseudo lsc density}. In particular, Lemma~\ref{lem: positive density} implies that $(\leadsto)$ is a partial order on $\Jc$.

\begin{lem}
	\label{lem: positive density}
	Assume~\ref{hyp: pseudo lsc density}. 
	\begin{enumerate}
		\item 
		The sets $(\rho^k)^{-1}((0,\infty))$ are open subsets of $(\Rxinit)\times \R^d$ for all $k\in \N$.
		\item The set $\tilde \rho^{-1}((0,\infty))$ is an open subset of $(\Rxinit)\times \R^d$.
		\item Let $x\in \Rxinit$, $y\in \R^d$, and $u\in \Wc$. If $\rho(x,uy)>0$, then $\rho^{k}(x,y)>0$, where $k=|u|+1$.
		\item 
		Let $x\in \Rxinit$, and $y,z\in \R^d$.
		If $\rho^k(x,y)>0$ for some $k\in \N$ and $\rho^l(y,z)>0$ for some $l\in\N$, then $\rho^{k+l}(x,z)>0$.
	In particular,  
	\begin{equation*}
		\big(\tilde \rho(x,y)>0,\ \tilde \rho (y,z)>0\big)\Rightarrow\tilde \rho(x,z)>0.
	\end{equation*}
\end{enumerate} 
\end{lem}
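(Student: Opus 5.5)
The plan is to argue each item from the integral representation of $\rho^k$ together with Assumption~\ref{hyp: pseudo lsc density}, with Fatou's lemma doing the work for openness. Items 3 and 4 are pointwise positivity statements and come first, since item 1 can reuse their mechanism.

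\emph{Item 3.} Write $u\in\Wc_{k-1}$, so that $\rho(x,uy)=\rho(x,u_1)\prod_{i}\rho(u_i,u_{i+1})\,\rho(u_{k-1},y)>0$. By Assumption~\ref{hyp: pseudo lsc density}, each factor stays positive on a neighbourhood of the relevant pair. Since the conditions are chained, I would argue backwards. Choose a small ball $B_{k-1}$ around $u_{k-1}$ such that $\rho(\xi_{k-1},y)>0$ for $\xi_{k-1}\in B_{k-1}$. Then choose a ball $B_{k-2}$ around $u_{k-2}$ such that $\rho(\xi_{k-2},\xi_{k-1})>0$ on $B_{k-2}\times B'_{k-1}$, where $B'_{k-1}\subseteq B_{k-1}$ is a smaller ball around $u_{k-1}$. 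This is possible because the positivity set is open in the product, hence contains a product of balls. Continue down to $u_1$, where we need $\rho(x,\xi_1)>0$ for $\xi_1$ in a ball.

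This yields a product of balls $B'_1\times\cdots\times B'_{k-1}$ of positive Lebesgue measure on which the integrand $\rho(x,\xi)\rho(\xi_{k-1},y)$ of $\rho^k(x,y)=\int \rho(x,\xi)\rho(\xi_{k-1},y)\,\leb(\d\xi)$ is positive. Hence $\rho^k(x,y)>0$. The case $u=e$ is trivial.

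\emph{Item 1.} Fix $(x,y)$ with $\rho^k(x,y)>0$. The integrand $\xi\mapsto\rho(x,\xi)\rho(\xi_{k-1},y)$ is then positive on a set $S$ of positive measure. For each $\xi\in S$, openness gives a neighbourhood of $(x,y)$ on which $\rho(x',\xi_1)\prod\rho(\xi_i,\xi_{i+1})\rho(\xi_{k-1},y')>0$. Now take a sequence $(x_n,y_n)\to(x,y)$. The indicator of positivity of the integrand at $(x_n,y_n,\xi)$ converges to $1$ for every $\xi\in S$. However, positivity of indicators does not bound the integral from below, because the integrand could be small.

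Instead I would use the first construction: positivity of $\rho^k(x,y)$ forces some $u$ with $\rho(x,uy)>0$, by taking any $\xi\in S$. The product-of-balls construction from item 3 can then be made \emph{uniform} in $(x',y')$ near $(x,y)$. When choosing the balls, use the openness of the positivity sets in all variables, including $x$ and $y$, so that the same boxes work for all $(x',y')$ in a neighbourhood. Then $\rho^k(x',y')>0$ for all such $(x',y')$, so the positivity set is open. The handling of $\xinit$ is immediate, since $\xinit$ is isolated in $\Rxinit$.

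\emph{Item 2.} Since $\tilde\rho=\sum_k 2^{-k}\rho^k$ is a sum of nonnegative terms, $\tilde\rho^{-1}((0,\infty))=\bigcup_k(\rho^k)^{-1}((0,\infty))$. This is a union of open sets by item 1, hence open.

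\emph{Item 4.} Using the Chapman–Kolmogorov identity $\rho^{k+l}(x,z)=\int\rho^k(x,w)\rho^l(w,z)\,\leb(\d w)$, it suffices to show that the integrand is positive on a set of positive measure. By item 1, the sets $\{w:\rho^k(x,w)>0\}$ and $\{w:\rho^l(w,z)>0\}$ are open and both contain $y$. Their intersection is therefore a nonempty open set, which has positive Lebesgue measure. The $\tilde\rho$ statement follows by picking $k,l$ with positive terms.

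The main obstacle is the uniform box construction in item 1. It relies on the positivity set being open jointly in both arguments, and the chaining of the balls must be done carefully, backwards from the last factor.
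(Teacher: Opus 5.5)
Your proof is correct. Items 2, 3 and 4 follow the paper's argument. For item 3, the paper simply intersects, letter by letter, product neighbourhoods of the consecutive pairs $(u_i,u_{i+1})$, so your backward chaining with shrinking balls is more careful than needed, but harmless.

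The genuine difference is item 1. The paper argues by induction on $k$ through the one-step recursion $\rho^{k+1}(x,y)=\int_{\R^d}\rho^k(x,z)\rho(z,y)\leb(\d z)$:
\begin{itemize}
\item it picks one $z$ with $\rho^k(x,z)\rho(z,y)>0$;
\item it takes a neighbourhood $B_1\times B_2$ of $(x,z)$ from the induction hypothesis and a neighbourhood $A_1\times A_2$ of $(z,y)$ from Assumption~\ref{hyp: pseudo lsc density};
\item it concludes positivity on $B_1\times A_2$, because the integrand is positive on the nonempty open set $B_2\cap A_1$.
\end{itemize}
You instead pick a whole positive path $u\in\Wc_{k-1}$ and build a single box of intermediate letters that works simultaneously for every $(x',y')$ in a product neighbourhood of $(x,y)$. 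In effect, item 1 becomes a uniform version of item 3. Both routes rest on the same two facts: positivity sets are open, and a nonnegative integrand that is positive on a nonempty open set has positive integral. Your route unifies items 1 and 3 and needs no induction. The paper's keeps the bookkeeping to two variables per step.

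Two side remarks. First, the Fatou's lemma announced in your opening sentence never actually appears. Second, the sequential argument you discard would in fact have worked. To get $\rho^k(x_n,y_n)>0$ you need no lower bound on the size of the integrand, only that it be positive on a set of positive measure. Fatou's lemma applied to the indicators $\mathbf 1_S\,\mathbf 1\{\text{integrand at }(x_n,y_n)>0\}$ gives exactly this, since these indicators converge pointwise to $\mathbf 1_S$.
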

\begin{proof}
	\begin{enumerate} 
	\item 
	Observe by performing the change of variables $\xi=\xi' z$ in the integral defining $\rho ^{k+1}$ that the functions $\rho^k$ satisfy the recursive relation 
	\begin{equation}
		\label{eqloc: recursive relation for rhok}
		\rho^{k+1}(x,y)=\int_{\R^d}\rho^{k}(x,z)\rho(z,y)\leb(\d z).
	\end{equation}
	Let $(x,y)\in (\Rxinit)\times \R^d$ be such that $\rho^{k+1}(x,y)>0$. Thus, the integrand in~\eqref{eqloc: recursive relation for rhok} cannot be almost everywhere zero; there exists $z\in \R^d$ such that $\rho^{k}(x,z)\rho(z,y)>0$. Therefore, by Assumption~\ref{hyp: pseudo lsc density}, $\rho$ is positive on an open neighborhood $A_1\times A_2$ of $(z,y)$.
	In addition, if $\rho^k$ is such that $(\rho ^k)^{-1}((0,\infty))$ is open, then $\rho^{k}$ is positive on an open neighborhood $B_1\times B_2$ of $(x,z)$. For all $(x',y')\in B_1\times A_2$, since $B_2\cap A_1$ is an open set on which $\rho^{k}(x',z)\rho(z,y')>0$, the relation~\eqref{eqloc: recursive relation for rhok} yields $\rho^{k+1}(x',y')>0$. This proves that $(\rho ^{k+1})^{-1}((0,\infty))$ is an open set, hence by induction $(\rho ^k)^{-1}((0,\infty))$ is an open set for all $k\in \N$.
	\item Since $\tilde \rho\geq 2^{-k}\rho ^k$ for all $k$, the first point implies that $\tilde \rho^{-1}((0,\infty))$ is an open set of $(\Rxinit)\times \R^d$.
	\item 
	When $k=1$, we have $\rho^1(x,y)=\rho(x,y)>0$. Let $k\geq 2$.
	By definition,
	\begin{equation*}
		\rho^k(x,y)=\int_{\Wc_{k-1}}\rho(x,\xi_1)\rho(\xi_1,\xi_2)\ldots \rho(\xi_{k-2},\xi_{k-1})\rho(\xi_{k-1},y)\leb(\d \xi).
	\end{equation*}
	The integrand is positive at $\xi=u$. By Assumption~\ref{hyp: pseudo lsc density}, there exist open neighborhoods $A_i$ of each letter $u_i$ such that the integrand is positive on $A_1\times\ldots\times A_{k-1}$. Hence, the integral is positive.
	\item
	By the change of variables $\xi= \xi' y'\xi''$ in the integral defining $\rho^{k+l}(x,z)$, we have
	\begin{align*}
		\rho^{k+l}(x,z)
		&=\int_{\Wc_{k-1}}\int_{\R^d}\int_{\Wc_{l-1}}p(x,\d \xi')p(\xi_{k-1}',\d y ')p(y' ,\d \xi'')\rho(\xi''_{l-1},z)
		\\&=\int_{\R^d}\rho^{k}(x,y')\rho^l(y',z)\leb(\d y ').
	\end{align*}
	By the first point, the integrand is positive on an open neighborhood of $y$, hence the integral is positive.
	\end{enumerate}
\end{proof}
\subsubsection{Pseudo-uniformity}
If $K$ is a compact subset of $C$, we denote by $K_j$ the compact set $C_j\cap K$, and by $K_\beta$ the singleton $\{\xinit\}$. We also set, for $j\in \Jc_K:=\{\beta\}\cup\{j\in \Jc\ |\ K_j\neq \emptyset\}$,\footnote{Recall that we write $\beta\leadsto j$ when $\tilde p(\xinit, C_j)>0$.}
\begin{equation*}
	K_{ j}^+=\bigcup_{\substack{j'\in \Jc\\j\leadsto j'}}K_{j'},\qquad K_j^-=\bigcup_{\substack{j'\in \Jc\\j'\leadsto j}}K_{j'}.
\end{equation*}
We say that a compact set $K\subseteq C$ is \emph{admissible} if $\Jc_K$ is finite and totally ordered.
\begin{hyp}
	\label{hyp: pseudo uniformity with tau}
	\label{hyp: pseudo uniformity}
	For all admissible compact sets $K\subseteq C$,
	there exist a constant $c_K$, an integer $\tau_K$ and a measurable function 
	\begin{equation*}
		\tau:\bigcup_{j\in \Jc_K}K_j^-\times K_{j}\to \{1,\ldots, \tau_{K}\}
	\end{equation*}
	such that, for all $j\in \Jc_K$,  
	\begin{equation}
		\label{eq: existence of tau}
		\sup_{x_1\in K_{j}^-}\sup_{k\in \N}\rho ^k(x_1,y)\leq c_K\rho^{\tau(x_2,y)}(x_2,y),
		\qquad (x_2,y)\in K_{j}^-\times K_j.
	\end{equation}
\end{hyp}
We notice that, if~\ref{hyp: pseudo uniformity} is satisfied, then the bound~\eqref{eq: existence of tau} also holds when the letter $y\in K_j$ is replaced by a word $u\in \Wc$ whose first letter belongs to $K_j$.
 
Assumption~\ref{hyp: pseudo uniformity} is a far weaker assumption than the usual uniformity assumption~\ref{assumption U}. In order to compare them, Proposition~\ref{lem: existence of tau} below provides an alternative formulation of~\ref{hyp: pseudo uniformity}.
Assumption~{\rm (B')} of Proposition~\ref{lem: existence of tau} and Assumption~\ref{assumption U} differ in two critical ways.
First, the bound~\eqref{eq: pseudo uniformity} is not required to hold uniformly on $C\times C$, but only on compact sets.
Second, the bound~\eqref{eq: pseudo uniformity} is only required to hold for suitable $x_1,x_2$: those from which $y$ is reachable. In particular, this assumption may be satisfied in non-irreducible contexts, whereas~\ref{assumption U} obviously cannot. See the examples in Appendix~\ref{section: examples}. 
\begin{prop}
	\label{lem: existence of tau}
	\label{hyp: pseudo uniformity with density}
	Assumption~\ref{hyp: pseudo uniformity with tau} is equivalent to:	
	\\Assumption {\rm (B')}. For all admissible compact sets $K\subseteq C$, there exist a constant $c'_K$ and an integer $\tau_K$ such that, for all $j\in \Jc_K$,
	\begin{equation}
		\label{eq: pseudo uniformity}
		\rho^k(x_1,y)
		\leq 
		\frac {c'_K}{\tau_K}\sum_{i=1}^{\tau_K}\rho^{i}(x_2,y)
		, \qquad x_1,x_2\in K_j^-,\ y\in K_j,\ k\in \N.
	\end{equation}
\end{prop}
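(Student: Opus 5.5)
Both implications are elementary comparisons of~\eqref{eq: existence of tau} with~\eqref{eq: pseudo uniformity}. The only real content is constructing a \emph{measurable} function $\tau$ in the direction (B')$\Rightarrow$\ref{hyp: pseudo uniformity with tau}.

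\emph{From~\ref{hyp: pseudo uniformity with tau} to (B').} Fix an admissible compact $K$, some $j\in\Jc_K$, points $x_1,x_2\in K_j^-$, $y\in K_j$ and $k\in\N$. By~\eqref{eq: existence of tau},
\[
\rho^k(x_1,y)\leq \sup_{x_1'\in K_j^-}\sup_{k'\in\N}\rho^{k'}(x_1',y)\leq c_K\,\rho^{\tau(x_2,y)}(x_2,y)\leq c_K\sum_{i=1}^{\tau_K}\rho^i(x_2,y),
\]
because $\tau(x_2,y)\in\{1,\ldots,\tau_K\}$ and every $\rho^i$ is nonnegative. Hence (B') holds with the same $\tau_K$ and $c'_K:=c_K\tau_K$.

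\emph{From (B') to~\ref{hyp: pseudo uniformity with tau}.} Fix $K$ and let $c'_K$, $\tau_K$ be given by (B'). Take the supremum over $x_1\in K_j^-$ and $k\in\N$ in~\eqref{eq: pseudo uniformity}; its right-hand side does not depend on $x_1$ or $k$, so
\[
\sup_{x_1}\sup_k\rho^k(x_1,y)\leq \frac{c'_K}{\tau_K}\sum_{i=1}^{\tau_K}\rho^i(x_2,y)\leq c'_K\max_{1\leq i\leq\tau_K}\rho^i(x_2,y).
\]
Define $\tau(x_2,y)$ as the smallest $i\in\{1,\ldots,\tau_K\}$ with $\rho^i(x_2,y)=\max_{1\leq i'\leq\tau_K}\rho^{i'}(x_2,y)$. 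Then~\eqref{eq: existence of tau} holds with $c_K:=c'_K$.

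For measurability, each $\rho^i$ is jointly measurable, since $\rho$ is jointly measurable by Lemma~\ref{lem: measurable density} and the integrals defining $\rho^i$ preserve joint measurability by Fubini–Tonelli. For each $i$, the set $\{\tau=i\}$ is
\[
\bigcap_{i'<i}\{\rho^i>\rho^{i'}\}\cap\bigcap_{i'>i}\{\rho^i\geq\rho^{i'}\},
\]
a finite intersection of measurable sets, so $\tau$ is measurable on the stated domain.

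There remains the issue of defining $\tau$ on the union $\bigcup_{j\in\Jc_K}K_j^-\times K_j$. If these sets overlap for different $j$, the rule above is still unambiguous, because it depends only on $(x_2,y)$ and not on $j$. The bound then holds for every $j$ containing the point, since (B') applies for each such $j$. The only real subtlety is this measurability and consistency of the argmax selection. The mild point to check is that the definition of $\rho^i$ as an integral over $\Wc_{i-1}$ indeed yields joint measurability in $(x,y)$, and Fubini–Tonelli gives exactly that.
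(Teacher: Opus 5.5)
Your proof is correct and follows the paper's argument: you use the same comparison $\rho^{\tau(x_2,y)}(x_2,y)\le\sum_{i\le\tau_K}\rho^i(x_2,y)$ to get $c'_K=c_K\tau_K$, and the same smallest-index argmax choice of $\tau$ to get $c_K=c'_K$ in the converse. Your explicit description of $\{\tau=i\}$ and the Tonelli remark on joint measurability of $\rho^i$ spell out what the paper only asserts. The overlap concern is moot: the classes $C_j$ are disjoint under Assumption~\ref{hyp: pseudo lsc density}, so the sets $K_j^-\times K_j$ are already disjoint.
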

\begin{proof}
	Assume~\ref{hyp: pseudo uniformity with tau}.  For all $j\in \Jc_K$, for every $x_2\in K_j^-$ and almost every $y\in K_j$, we have
	\begin{equation*}
		\sup_{x_1\in K_j^-}\sup_{k\in \N}\rho^k(x_1,y)
		\leq c_K\rho^{\tau(x_2,y)}(x_2,y)
		\leq c_K\sum_{i=1}^{\tau_K}\rho^{i}(x_2, y ),
	\end{equation*}
	which implies \rm{(B')} with $c_K'=c_K\tau_K$.
	We now assume \rm{(B')} and prove~\ref{hyp: pseudo uniformity with tau}.
	For all $j\in \Jc_K$, for every $x\in K_j^-$ and almost every $y\in K_j$, we have
	\begin{equation*}
		\sup_{x_1\in K_{j}^-}\sup_{k\in \N}\rho^k(x_1,y)
		\leq 
		\frac {c'_K}{\tau_K}\sum_{i=1}^{\tau_K}\rho^{i}(x, y ).
	\end{equation*}
	Let $\tau$ be the function defined for every $x\in K_j^-$ and almost every $y\in K_j$ as the exponent $i$ maximizing $\rho^i(x,y)$, or the smallest of such if there are several:
	\begin{equation*}
		\tau(x,y)=\min\big\{i\in \{1,\ldots, \tau_K\}\ \big|\ \forall i'\in \{1,\ldots \tau_K\},\ \rho^{i}(x,y)\geq \rho^{i'}(x,y)\big\}.
	\end{equation*}
	This defines a measurable function $\tau$ almost everywhere on $\bigcup_{j\in \Jc_K}K_j^-\times K_{j}$.
	This function $\tau$ is such that for all $j\in \Jc_K$, for every $x_2\in K_j^-$ and almost every $y\in K_j$,
	\begin{equation*}
		\sup_{x_1\in K_{j}^-}\sup_{k\in \N}\rho^k(x_1,y)
		\leq \frac {c'_K}{\tau_K}\sum_{i=1}^{\tau_K}\rho^{i}(x_2, y )
		\leq 
		c'_K	\rho^{\tau(x_2,y)}(x_2,y).
	\end{equation*}
	Therefore,~\ref{hyp: pseudo uniformity with tau} is satisfied with $c_K=c'_K$.
\end{proof}
Showing that~\ref{hyp: pseudo uniformity} holds may seem convoluted. However, it is often quite simple in practice, as~\ref{hyp: pseudo uniformity} is actually a consequence of the boundedness and semicontinuity of $\rho$.
\begin{lem}
	\label{lem: boundedness and semicontinuity}
	If $\rho$ is lower semicontinuous and bounded on $(\Rxinit )\times \R^d$, then~\ref{hyp: pseudo uniformity with tau} is satisfied.
\end{lem}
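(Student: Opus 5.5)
The plan is to verify the equivalent formulation (B') of Proposition~\ref{lem: existence of tau}: boundedness of $\rho$ gives a uniform upper bound on the left-hand side of~\eqref{eq: pseudo uniformity}, and lower semicontinuity with compactness gives a uniform positive lower bound on the right-hand side. Let $M:=\sup\rho<\infty$ and fix an admissible compact $K\subseteq C$.

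\emph{Upper bound.} For $k\geq 2$, the recursive relation~\eqref{eqloc: recursive relation for rhok} gives
\begin{equation*}
\rho^k(x_1,y)=\int_{\R^d}\rho^{k-1}(x_1,z)\rho(z,y)\leb(\d z)\leq M\int_{\R^d}\rho^{k-1}(x_1,z)\leb(\d z)=M,
\end{equation*}
because $\rho^{k-1}(x_1,\cdot)$ is a probability density. Together with $\rho^1=\rho\leq M$, this yields $\sup_{x_1}\sup_k\rho^k(x_1,y)\leq M$ for all $y$.

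\emph{Lower semicontinuity of $\rho^k$.} I will show by induction that every $\rho^k$ is lower semicontinuous on $(\Rxinit)\times\R^d$, with $\xinit$ treated as an isolated point. The case $k=1$ is the hypothesis. For the inductive step, take $(x_n,y_n)\to(x,y)$. Since the integrand in~\eqref{eqloc: recursive relation for rhok} is nonnegative and lower semicontinuous, Fatou's lemma gives
\begin{equation*}
\liminf_n \rho^{k+1}(x_n,y_n)\geq\int\liminf_n\rho^k(x_n,z)\rho(z,y_n)\,\leb(\d z)\geq\rho^{k+1}(x,y).
\end{equation*}

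\emph{Positivity on the relevant pairs.} Fix $j\in\Jc_K$, $x_2\in K_j^-$ and $y\in K_j$, so that $x_2\in K_{j'}$ with $j'\leadsto j$, or $x_2=\xinit$ when $j=\beta$. I claim $\tilde\rho(x_2,y)>0$.
\begin{itemize}
\item If $x_2\in C_j$, this holds by the definition of a communicating class.
\item Otherwise $\tilde p(x_2,C_j)>0$, so there is $z\in C_j$ with $\tilde\rho(x_2,z)>0$. Since also $\tilde\rho(z,y)>0$, Lemma~\ref{lem: positive density}(4) gives $\tilde\rho(x_2,y)>0$.
\end{itemize}
The case $x_2=\xinit$ uses $\beta\leadsto j$ in the same way. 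Hence $\rho^k(x_2,y)>0$ for some $k$.

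\emph{Compactness (the main step).} Consider
\begin{equation*}
S:=\bigcup_{j\in\Jc_K}K_j^-\times K_j.
\end{equation*}
This is compact, being a finite union (recall $\Jc_K$ is finite) of products of finite unions of compact sets $K_{j'}$ and possibly the point $\xinit$. By the previous step, $S$ is covered by the sets
\begin{equation*}
U_{k,m}:=\{(x,y)\ :\ \rho^k(x,y)>1/m\},
\end{equation*}
which are open by lower semicontinuity. Extracting a finite subcover gives $\tau_K$ and $\delta>0$ such that $\sum_{i=1}^{\tau_K}\rho^i(x_2,y)\geq\delta$ on $S$. Combining with the upper bound,
\begin{equation*}
\rho^k(x_1,y)\leq M\leq \frac{M\tau_K/\delta}{\tau_K}\sum_{i=1}^{\tau_K}\rho^i(x_2,y),
\end{equation*}
which is (B') with $c'_K=M\tau_K/\delta$. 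Proposition~\ref{lem: existence of tau} then yields Assumption~\ref{hyp: pseudo uniformity with tau}.

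The delicate points are in this last step. One must check that each $K_j=C_j\cap K$ is indeed compact, as the paper asserts. One must also handle the topology at $\xinit$ carefully, so that $\{\xinit\}\times K_j$ is compact and the covering argument applies there as well.
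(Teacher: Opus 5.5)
Your proof is correct and is essentially the paper's argument: the uniform bound $\rho^k\leq M$, lower semicontinuity of $\rho^k$ via Fatou, positivity of $\tilde\rho(x_2,y)$ obtained by passing through a point $z\in C_j$, and a finite subcover of the compact set giving a uniform positive lower bound. The only cosmetic difference is that you conclude through (B') and Proposition~\ref{lem: existence of tau}, which also takes care of the measurability of $\tau$, whereas the paper defines $\tau(x,y)=l_i$ directly from the cover and takes $c_K=M/m$; the compactness of $K_j$ that you flag does hold, because the classes are disjoint open sets covering $K$, so $K_j=K\setminus\bigcup_{j'\neq j}C_{j'}$ is closed in $K$.
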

\begin{proof} 
	Let $j,j'\in \Jc_K$ be such that $j'\leadsto j$. For all $(x,y)\in K_{j'}\times K_{j}$, there exists $l=l(x,y)\in \N$ such that $ \rho^l(x,y)>0$. 
	Indeed, by definition of the order, there exists some $z\in C_{j}$ such that $\tilde \rho(x,z)>0$; since we also have $\tilde \rho(z,y)>0$, we get $\tilde \rho(x,y)>0$, which in turn implies the existence of $l$. Therefore,
	\begin{equation*}
		K_{j'}\times K_{j}\subseteq \bigcup_{l\in \N}\bigcup_{n\in \N}\big\{(x,y)\in K_{j'}\times K_{j} \ |\ \rho^l(x,y)>1/n\big\}.
	\end{equation*}
	The set on the left-hand side of this inclusion is compact. By Fatou's Lemma, the lower semicontinuity of $\rho^l$ follows from that of $\rho$.
	Hence, the sets appearing on the right-hand side of the inclusion are open.  We extract a finite cover from this open cover; by taking $m$ as the smallest $1/n$ appearing in the finite cover, we deduce
	\begin{equation*}
		K_{j'}\times K_{j}\subseteq\bigcup_{i=1}^NB_i,\qquad B_i=\big\{(x,y)\in K_{j'}\times K_{j} \ |\ \rho^{l_i}(x,y)\geq m\big\}.
	\end{equation*}
	We set $\tau(x,y)=l_i$, for some $i$ satisfying $(x,y)\in B_i$. For all  $(x,y)\in K_{j'}\times K_j$, we have
	\begin{equation}
		\label{eqloc: rho tau}
		\rho^{\tau(x,y)}(x,y)\geq m.
	\end{equation} 
	This construction holds for any $j,j' \in \Jc_K$ such that $j'\leadsto j$, thus we have defined a function $\tau$ satisfying~\eqref{eqloc: rho tau} on $\bigcup_{j\in \Jc_K}K_j^-\times K_{j}$.
	By the boundedness assumption on $\rho$, we consider a constant $M$ such that $\rho\leq M$ on $ (\Rxinit)\times \R^d$. Then, $\rho^k\leq M$ on $ (\Rxinit)\times \R^d$ for all $k\in \N$, thus
	\begin{equation*}
		\sup_{x_1\in K_{j}^-}\sup_{k\in \N}\rho ^k(x_1,y)\leq M, \qquad y\in K.
	\end{equation*}
	Taking $c_K=M/m$ yields~\eqref{eq: existence of tau}.
\end{proof}
\subsubsection{Measure of the boundaries}
\begin{hyp}
	\label{hyp: null border}
	The boundary $\partial C$ of $C$ has Lebesgue measure $0$.
\end{hyp}
We view this assumption as mild. One simple counter-example is when $\rho(x,\cdot)$ does not depend on $x$ (thus the Markov chain is actually an i.i.d.~sequence) and is supported on the complement of the fat Cantor set. In that case, there is exactly one communicating class $C$, which is the complement of the fat Cantor set itself, hence $\partial C$ has positive Lebesgue measure.
\subsubsection{Behavior outside of the communicating classes}
\begin{hyp}
\label{hyp: getting fast to classes}
For all $\mu\in \Pc(\R^d)$ such that $\mu(C)<1$, there exists 
a neighborhood $V$ of $\mu$ (in the weak topology of $\Pc(\R^d)$) such that 
\begin{equation}
	\label{eq: getting fast to classes}
	\lim_{n\to \infty}\frac 1n\log\P(L_n\in V)=-\infty.
\end{equation}
\end{hyp}
Assumption~\ref{hyp: getting fast to classes} may seem tailor-made for the conclusions of Theorem~\ref{theo: main result}. 
This assumption allows us to avoid addressing the problematic case of measures that are not supported on $\overline C$.
However, we believe that making such an assumption is not restrictive in practice, as it is very natural for Markov chains to satisfy it. Of course, when $\overline C=\R^d$, there is nothing to prove. But even in more complicated practical cases,~\ref{hyp: getting fast to classes} is often easily proved. 
Examples~\ref{ex: rorw} and~\ref{ex: one dimensional} in Appendix~\ref{section: examples} present some arguments that can be used to prove~\ref{hyp: getting fast to classes} in dimension $d=1$.
In fact, we were unable to find any simple instance of a Markov chain whose density is both bounded and lower semicontinuous on $(\Rxinit)\times \R^d$, that satisfies~\ref{hyp: null border} while violating~\ref{hyp: getting fast to classes}.
\subsection{Main results}
\label{section: mai results}
Before stating the result, let us introduce the \emph{admissible measures}. Intuitively, admissible measures are the elements of $\Pc(\R^d)$ that can be approximated by $L_n$ for arbitrarily large $n$, with positive probability. Their definition is similar to that of admissible measures in the discrete case \cite{wu2005,daures2025}.
Let us recall that $p$ is such that $p(x,\cdot)$ is assumed to be absolutely continuous with respect to the Lebesgue measure for all $x\in \Rxinit$. The communicating classes $(C_j)$ are defined in Section~\ref{section: classes} and Assumptions ~\ref{hyp: pseudo lsc density},~\ref{hyp: pseudo uniformity},~\ref{hyp: null border} and~\ref{hyp: getting fast to classes} are defined in Section~\ref{section: assumptions} above.
\begin{defi}
	\label{defi: admissible measure}
	A measure $\mu \in \Pc(\R^d)$ is said to be \emph{admissible} if 
	\begin{enumerate}
		\item $\mu$ is absolutely continuous with respect to $\leb$;
		\item $\supp\mu\subseteq \overline C$;
		\item for all $j\in \Jc$ such that $\mu(C_j)>0$, we have $\beta\leadsto j$;
		\item the relation $\leadsto$ is a total order on $\{j\in \Jc\ |\ \mu(C_j)>0\}$;
	\end{enumerate}
	The set of admissible measures is denoted by $\Ac\subseteq \Pc(\R^d)$.
\end{defi}
Notice that
the set of probability measures satisfying the first three conditions is convex.
The set of probability measures satisfying the fourth condition may not be convex.
Nevertheless, if the fourth condition is satisfied by two measures $\mu_1$ and $\mu_2$ and by a third measure in $[\mu_1,\mu_2]\setminus \{\mu_1,\mu_2\}$, then it is satisfied by every measure of the line segment $[\mu_1,\mu_2]$. This is because the set $\{j\in \Jc\ |\ \lambda\mu_1(C_j)+(1-\lambda)\mu_2(C_j)>0\}$ is the same for all $\lambda\in(0,1)$. 
Therefore, the set $\Ac$ may not be convex, but it satisfies the same property.

We can now state the main theorem of the present paper.
\begin{theo}
	\label{theo: main result}
	Assume~\ref{hyp: pseudo lsc density},~\ref{hyp: pseudo uniformity},~\ref{hyp: null border} and~\ref{hyp: getting fast to classes}. Then, 
	$(L_n)$ satisfies the weak LDP with rate function $I$ satisfying 
	\begin{enumerate}
		\item if $\mu\notin \Ac$, then \begin{equation}
			I(\mu)=\infty;
		\end{equation}
		\item if $\mu_1,\mu_2\in \Ac$ and $\lambda\in [0,1]$ are such that $\lambda\mu_1+(1-\lambda)\mu_2\in \Ac$, then
		\begin{equation}
			\label{eq: convexity}
			I(\lambda\mu_1+(1-\lambda)\mu_2)\leq \lambda I(\mu_1)+(1-\lambda)I(\mu_2);
		\end{equation} 
		\item if $\mu_1,\mu_2\in \Ac$ are such that $\lambda\mu_1+(1-\lambda)\mu_2\in \Ac$ for some $\lambda\in [0,1]$ and if no class $C_j$ satisfies $\mu_1(C_j)\mu_2(C_j)>0$, then
		\begin{equation}
			\label{eq: linearity}
			I(\lambda\mu_1+(1-\lambda)\mu_2)= \lambda I(\mu_1)+(1-\lambda)I(\mu_2).
		\end{equation} 
	\end{enumerate}
\end{theo}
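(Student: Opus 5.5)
The plan follows the subadditive route already announced in the introduction. We work with the Ruelle--Lanford functions: for $\mu\in\Pc(\R^d)$ set
\begin{equation*}
\underline{I}(\mu)=-\inf_{V\ni\mu}\liminf_{n\to\infty}\frac1n\log\P(L_n\in V),\qquad
\overline{I}(\mu)=-\inf_{V\ni\mu}\limsup_{n\to\infty}\frac1n\log\P(L_n\in V),
\end{equation*}
with $V$ ranging over open neighborhoods. The standard fact is that if $\underline I=\overline I=:I$ everywhere, then $(L_n)$ satisfies the weak LDP with rate $I$. Here $I$ is automatically lower semicontinuous. The lower bound on open sets is immediate. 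The upper bound on compact sets follows by covering a compact set with finitely many neighborhoods on which the $\limsup$ is close to $-I$. The whole proof is therefore reduced to showing that the $\liminf$ and $\limsup$ coincide at every $\mu$, and to reading off properties 1--3 of $I$.

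\textbf{Non-admissible measures.} For $\mu\notin\Ac$ I would show directly that some neighborhood $V$ satisfies $\frac1n\log\P(L_n\in V)\to-\infty$. Then $\underline I=\overline I=\infty$, which gives item 1.

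The cases are as follows.
\begin{itemize}
\item If $\mu(C)<1$, this is exactly \ref{hyp: getting fast to classes}.
\item If $\mu(C)=1$ but $\mu$ is not absolutely continuous, the mass of $\mu$ on a Lebesgue-null compact set must be approximated by $L_n$. For $L_n$ to put mass $\epsilon$ in a small open neighborhood $U$ of that set, about $\epsilon n$ of the $X_i$ must lie in $U$. Using the density bounds of \ref{hyp: pseudo uniformity} on a compact part of $C$ (and \ref{hyp: null border} to discard $\partial C$), each such visit costs a factor at most $c_K\,\sup\rho^k\cdot\leb(U)$, which can be made arbitrarily small. The number of choices of visit times is at most $2^n$, so the rate is $-\infty$.
\item If $\mu$ charges two incomparable classes, or a class $j$ with $\beta\not\leadsto j$, then a trajectory cannot visit both classes (resp. cannot visit $C_j$ at all) without leaving the classes. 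By the previous bounds, this forces a neighborhood of probability $0$ or superexponentially small.
\end{itemize}

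\textbf{Admissible measures.} This is the core of the proof and its main obstacle. Fix $\mu\in\Ac$ and an admissible compact $K\subseteq C$ carrying most of the mass of $\mu$. The goal is an approximate subadditivity
\begin{equation*}
\P(L_{n+m}\in V)\ \geq\ c\,\P(L_n\in V')\,\P(L_m\in V'')
\end{equation*}
up to a bounded time gap $\tau_K$ and a multiplicative constant $c_K$. Here $V',V''$ are slightly smaller convex neighborhoods of $\mu$. The construction is to slice a good trajectory of length $m$ at its first entrance into $K$ and stitch it after a good trajectory of length $n$.

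\ref{hyp: pseudo uniformity} is used to compare the two densities: since the classes in $K$ are totally ordered and the first trajectory ends in a class from which the second's starting class is reachable, the density of the stitched word is bounded below by $c_K^{-1}$ times the original. The delicate points are the following.
\begin{itemize}
\item The junction must respect the order $\leadsto$. Because of this, one cannot simply concatenate arbitrary blocks. Instead, the pieces of the trajectory lying in each class are regrouped in increasing order of classes; this is the purpose of the slicing and stitching maps.
\item The random gap $\tau(x,y)$ must be absorbed.
\item The slicing/stitching map should be injective up to bounded multiplicity, so that the change of measure is controlled.
\end{itemize}
Once this inequality holds, the subadditive lemma \cite[Lemma 6.1.11]{DZ} gives existence of the limit, hence $\underline I=\overline I$ on $\Ac$.

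\textbf{Convexity and linearity.} The same coupling, applied with blocks of lengths $\lfloor\lambda n\rfloor$ near $\mu_1$ and $n-\lfloor\lambda n\rfloor$ near $\mu_2$, produces trajectories near $\lambda\mu_1+(1-\lambda)\mu_2$. This is legitimate precisely when the mixture is admissible, which guarantees the total order needed for stitching, and it yields inequality~\eqref{eq: convexity}.

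For item 3 the reverse inequality is needed, and here I would use a decoupling argument. If $\mu_1$ and $\mu_2$ charge disjoint sets of classes, a trajectory near the mixture spends its time in the classes of $\mu_1$ and those of $\mu_2$ separately; transitions between them occur essentially once each by the order structure. Slicing such a trajectory into its sub-trajectories in each group gives two words whose empirical measures are near $\mu_1$ and $\mu_2$. Bounding the density of the original word above by the product of densities (again via \ref{hyp: pseudo uniformity}, with only subexponentially many slicing positions) gives
\begin{equation*}
I(\lambda\mu_1+(1-\lambda)\mu_2)\geq\lambda I(\mu_1)+(1-\lambda)I(\mu_2).
\end{equation*}
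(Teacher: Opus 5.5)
Your architecture matches the paper's: existence of the Ruelle--Lanford function everywhere, rate $\infty$ off $\Ac$, slicing/stitching controlled by Assumption~\ref{hyp: pseudo uniformity} on $\Ac$, and decoupling for item~3. However, the central step for admissible measures does not work as you formulate it, for three reasons.

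\begin{enumerate}
\item A good word of length $m$ cannot be stitched \emph{after} a good word of length $n$ once $r\geq 2$. The first word typically ends in $K_r$, and $r\not\leadsto 1$, so no gap leads back to $K_1$. Regrouping the pieces of \emph{one} trajectory by class achieves nothing, since they are already in increasing order. The class-$j$ pieces of \emph{all} blocks must be interleaved; this is the column-by-column reordering of Lemma~\ref{lem: reordering map}.
\item The map is not injective up to bounded multiplicity, and the loss is not a constant $c_K$. The output set forgets the lengths of the discarded segments $\zeta^{i,j}$ and of the kept pieces. Assumption~\ref{hyp: pseudo uniformity} only controls $\sup_k\rho^k(x_1,\cdot)$, so integrating out each discarded segment of unknown length costs a factor of order $n+1$. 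Proposition~\ref{prop: proba ineq coupling map} gives the factor $\bigl(c_K^r(\tau_K+1)^r(n+1)^{2r+1}\bigr)^N$ for $N$ blocks.
\item Your inequality changes the neighborhood at each step ($V',V''$ versus $V$). It is therefore not a subadditive inequality for one fixed sequence, and iterating it lets the radius drift with the number of iterations. Invoking the classical subadditive lemma is not justified.
\end{enumerate}

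The missing idea is a one-shot coupling with the limits taken in a specific order.
\begin{itemize}
\item Fix $n$ and use $N=\lfloor T/(n+r\tau_K)\rfloor$ blocks of length $n$, each $\delta$-close to $\mu$.
\item Take $K$ to be the closed $\delta$-neighborhood of a compact $K^0$ with $\mu(K^0)\geq 1-\epsilon$. Then every such block has at least $(1-\epsilon-\delta)n$ letters in $K$.
\item Every word of $\cou_{N,n,T}(\underline u)$ then lies within $f(\epsilon,\delta)$ of $\mu$, a radius independent of $N$ (Corollary~\ref{coro: advanced geographic ineq coupling map}). This gives $\P(A_n(\delta))^N\leq \ccou\,\P(B_T(\epsilon,\delta))$.
\item As $T\to\infty$, $\frac1T\log\ccou\to\frac{1}{n+r\tau_K}\log\bigl(c_K^r(\tau_K+1)^r(n+1)^{2r+1}\bigr)$, which vanishes only when $n\to\infty$ afterwards. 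Hence $\overline s(\Bc_{\LP}(\mu,\delta))\leq\underline s(\Bc_{\LP}(\mu,f(\epsilon,\delta)))$, and one concludes by letting $\delta\to0$, then $\epsilon\to0$.
\end{itemize}
With blocks alternately of lengths $\lfloor\lambda n\rfloor$ and $n-\lfloor\lambda n\rfloor$, the same scheme gives item~2 directly for every $\lambda$, because $\IRL$ already exists at $\mu_1$ and $\mu_2$. This is a legitimate alternative to the paper's midpoint inequality plus dyadic approximation and lower semicontinuity. Your decoupling sketch for item~3 matches the paper's; there a single word is sliced, so the polynomial factor is harmless. One correction: the trajectory may switch between the two groups of classes up to $r$ times, not once.

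In item~1, your non-absolutely-continuous case cannot use a factor $c_K\sup\rho^k\cdot\leb(U)$, since boundedness of $\rho$ is not assumed. For $U$ inside a compact $K\subseteq C_j$ with $\beta\leadsto j$ (otherwise the probability is $0$), the usable bound is $\sup_{x_1\in K_j^-}\sup_k p^k(x_1,U)\leq c_K\sum_{i=1}^{\tau_K}p^i(\xinit,U)$. This tends to $0$ as $U$ shrinks to a Lebesgue-null compact set. Conditioning on the previous visit to $U$ then yields $\P(L_n(U)\geq\kappa)\leq 2^n\bigl(c_K\sum_{i\leq\tau_K}p^i(\xinit,U)\bigr)^{\lceil\kappa n\rceil}$. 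This is a self-contained substitute for the paper's appeal to the Donsker--Varadhan weak upper bound with test function $a\mathbf 1_A+1$. For incomparable classes no estimate is needed: since $\leadsto$ is defined through $\tilde p$, almost no trajectory visits both classes. A suitable L\'evy--Prokhorov ball therefore has probability exactly $0$.
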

Considering Lemma~\ref{lem: boundedness and semicontinuity} and the discussions of Section~\ref{section: assumptions}, a direct corollary of this theorem is the following.
\begin{theo}
	\label{theo: main result (simplified)}
	Assume that $\rho$ is lower semicontinuous and bounded on $(\Rxinit)\times \R^d$. In addition, assume~\ref{hyp: null border} and~\ref{hyp: getting fast to classes}.
	Then, the conclusions of Theorem~\ref{theo: main result} hold.
\end{theo}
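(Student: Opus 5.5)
Theorem~\ref{theo: main result (simplified)} is a direct reduction to Theorem~\ref{theo: main result}, so the plan is to check that its hypotheses imply Assumptions~\ref{hyp: pseudo lsc density},~\ref{hyp: pseudo uniformity},~\ref{hyp: null border} and~\ref{hyp: getting fast to classes}. Assumptions~\ref{hyp: null border} and~\ref{hyp: getting fast to classes} are assumed outright, so only the first two need verification. Once they are in hand, Theorem~\ref{theo: main result} gives the weak LDP together with the three listed properties of $I$.

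For Assumption~\ref{hyp: pseudo lsc density}, I use that $\rho$ is lower semicontinuous on $(\Rxinit)\times \R^d$. For every $t$ the superlevel set $\{\rho>t\}$ is then open, and the case $t=0$ says exactly that $\rho^{-1}((0,\infty))$ is open. One point needs care: the topology on $\Rxinit$. I take $\xinit$ to be an isolated point, so that lower semicontinuity at points $(\xinit,y)$ only concerns the variable $y$. This is consistent with the remark after Assumption~\ref{hyp: pseudo lsc density}, where the paper states that the assumption follows from lower semicontinuity of $\rho$.

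For Assumption~\ref{hyp: pseudo uniformity}, I invoke Lemma~\ref{lem: boundedness and semicontinuity}, whose hypotheses are exactly lower semicontinuity and boundedness of $\rho$ on $(\Rxinit)\times\R^d$. That lemma relies on Lemma~\ref{lem: positive density}, which needs Assumption~\ref{hyp: pseudo lsc density}, and this was established in the previous step. So there is no circularity.

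I expect no step to be a real obstacle. The only point worth stating explicitly is the topology on $\Rxinit$, so that semicontinuity at $\xinit$ is meaningful. With all four assumptions verified, Theorem~\ref{theo: main result} applies and yields the conclusions.
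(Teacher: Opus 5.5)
Your proposal is correct and follows the paper's own route. Lower semicontinuity of $\rho$ gives Assumption~\ref{hyp: pseudo lsc density} directly, as the paper remarks right after stating it. Lemma~\ref{lem: boundedness and semicontinuity} then supplies Assumption~\ref{hyp: pseudo uniformity}, which lets Theorem~\ref{theo: main result} apply; your notes on treating $\xinit$ as an isolated point and on the lemma's reliance on Lemma~\ref{lem: positive density} are accurate.
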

This is to be compared with existing results for discrete non-irreducible Markov chains from \cite{wu2005,rassoul,daures2025}. In particular, the set of admissible measures $\Ac$ plays the same role and we recover the same properties of $I$ as in the discrete case.
We believe that, as in the discrete case, $I$ coincides on $\Ac$ with the Donsker-Varadhan entropy. However, the proof of this \emph{a posteriori} identification by \cite{daures2025} cannot accommodate the continuous case, as it involves a \emph{minimal cycle decomposition}, which is a purely discrete object.

The rest of this article is dedicated to proving Theorem~\ref{theo: main result}. The theorem follows from Propositions~\ref{prop: IRL for non admissible measures},~\ref{prop: IRL for admissible measures} and~\ref{prop: IRL linearity}, that are stated in Section~\ref{section: Ruelle Lanford} below and proved in Sections~\ref{section: non admissible},~\ref{section: coupling and decoupling} and~\ref{section: decoupling}; this is where the subadditive method is actually performed.
Once Theorem~\ref{theo: main result} is proved, Theorem~\ref{theo: main result (simplified)} follows by Lemma~\ref{lem: boundedness and semicontinuity}.

\section{The Ruelle-Lanford function}
\label{section: Ruelle Lanford}
The weak topology of $\Pc(\R^d)$ is metrized by the L\'evy-Prokhorov metric $\dlp$ (see Appendix~\ref{section: levy prokhorov}). Recall that all $\mu,\nu\in \Pc(\R^d)$ satisfy $\dlp(\mu, \nu)\leq\dtv(\mu, \nu):= |\mu-\nu|_\TV$, where $|\cdot|_\TV$ denotes the total variation norm on the set of finite signed measures of $\R^d$. The open L\'evy-Prokhorov ball of radius $\delta$ and center $\mu$ is denoted by $\Bc_{\LP}(\mu, \delta)$. 
\begin{defi}
	For every measurable set $A$ of $\Pc(\R^d)$, we set 
	\begin{align*}
		\underline s(A)&=\liminf_{n\to \infty}\frac 1n \log \P(L_n\in A),\\
		\overline s(A)&=\limsup_{n\to \infty}\frac 1n \log \P(L_n\in A).
	\end{align*}
	Let $\mu\in \Pc(\R^d)$.
	We set \begin{align*}
		\underline{s}(\mu)&=\lim_{\delta\to 0}\underline s(\Bc_{\LP}(\mu,\delta)),\\
		\overline{s}(\mu)&=\lim_{\delta\to 0}\overline s(\Bc_{\LP}(\mu,\delta)).
	\end{align*}
	If $\underline s(\mu)=\overline s(\mu)$, we set $\IRL(\mu)=-\underline{s}(\mu)=-\overline s(\mu)$ and we say that the \emph{Ruelle-Lanford function} of $(L_n)$ exists at $\mu$.
\end{defi}

Ruelle-Lanford functions are named after Ruelle and Lanford, who used a subadditive approach to define and compute limits of logarithmic moment generating functions \cite{ruelle1965,ruelle1967,lanford1973} before they were used in large deviations. See \cite{pfisterlewis1995} for a detailed historical account. Our use of the Ruelle-Lanford function lies in the following standard lemma.
\begin{lem}
	\label{lem: RL function implies LDP}
	If the Ruelle-Lanford function of $(L_n)$ exists at all $\mu\in \Pc(\R^d)$, then $\IRL$ is lower semicontinuous on $\Pc(\R^d)$ and $(L_n)$ satisfies the weak LDP with rate function $I=\IRL$.
\end{lem}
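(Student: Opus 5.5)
The plan is to first establish lower semicontinuity of $\IRL$, and then to derive the two bounds of the weak LDP from the definition of $\underline s(\mu)$ and $\overline s(\mu)$, using the existence hypothesis only in the form $\IRL(\mu)=-\underline s(\mu)=-\overline s(\mu)$ for every $\mu$.

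\emph{Lower semicontinuity.} Let $\mu_k\to\mu$ in $\Pc(\R^d)$ and fix $\delta>0$. For $k$ large, $\dlp(\mu_k,\mu)<\delta/2$. By the triangle inequality, $\Bc_{\LP}(\mu_k,\delta/2)\subseteq \Bc_{\LP}(\mu,\delta)$. Monotonicity of $\overline s$ in the set, together with the fact that $\delta\mapsto\overline s(\Bc_{\LP}(\nu,\delta))$ is nondecreasing (so the limit as $\delta\to0$ is an infimum), gives
\begin{equation*}
	-\IRL(\mu_k)=\overline s(\mu_k)\leq \overline s(\Bc_{\LP}(\mu_k,\delta/2))\leq \overline s(\Bc_{\LP}(\mu,\delta)).
\end{equation*}
Taking $\limsup_k$ and then letting $\delta\to0$ yields $\liminf_k\IRL(\mu_k)\geq\IRL(\mu)$.

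\emph{Lower bound.} Let $O$ be open and $\mu\in O$. Choose $\delta$ small enough that $\Bc_{\LP}(\mu,\delta)\subseteq O$. Then
\begin{equation*}
	\underline s(O)\geq \underline s(\Bc_{\LP}(\mu,\delta))\geq \underline s(\mu)=-\IRL(\mu),
\end{equation*}
where the second inequality holds because $\delta\mapsto\underline s(\Bc_{\LP}(\mu,\delta))$ is nondecreasing. Taking the supremum over $\mu\in O$ gives the lower bound.

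\emph{Upper bound.} This is the only step requiring care, since it is where compactness enters. Let $F$ be compact and fix $M<\inf_F\IRL$; if $\inf_F\IRL=\infty$, take $M$ arbitrarily large. For each $\mu\in F$, we have $\overline s(\mu)=-\IRL(\mu)<-M$, so there is $\delta_\mu>0$ with $\overline s(\Bc_{\LP}(\mu,\delta_\mu))<-M$. These balls cover $F$, so by compactness finitely many of them, $B_1,\dots,B_N$, suffice. The standard inequality
\begin{equation*}
	\limsup_{n\to\infty}\frac1n\log\sum_{i=1}^N a_{n,i}=\max_i\limsup_{n\to\infty}\frac1n\log a_{n,i}
\end{equation*}
then gives
\begin{equation*}
	\overline s(F)\leq \overline s\Big(\bigcup_{i=1}^N B_i\Big)\leq\max_i\overline s(B_i)<-M.
\end{equation*}
Letting $M\uparrow\inf_F\IRL$ concludes the upper bound. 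Together with the lower semicontinuity proved above, this shows that $(L_n)$ satisfies the weak LDP with rate function $I=\IRL$.
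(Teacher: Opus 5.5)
Your proof is correct and is essentially the standard argument of \cite[Section 4.1.2]{DZ}, which the paper cites instead of giving its own proof: lower semicontinuity from the nesting $\Bc_{\LP}(\mu_k,\delta/2)\subseteq\Bc_{\LP}(\mu,\delta)$, the lower bound by fitting a small ball inside the open set, and the upper bound by a finite subcover of the compact set together with the largest-term principle for finite sums. The only cosmetic difference is that you prove lower semicontinuity with sequences, which is legitimate since $\dlp$ metrizes the weak topology, whereas Dembo--Zeitouni show directly that the sublevel sets are closed.
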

For a (simple) proof of this lemma and related properties, see \cite[Section 4.1.2]{DZ}.
This is the cornerstone of the subadditive method. 
To prove the weak LDP of Theorem~\ref{theo: main result}, it suffices to prove the existence of the Ruelle-Lanford function.
This is achieved by deriving supermultiplicative inequalities for some sequences $(\P(L_n\in \Bc_\LP(\mu,\delta)))$\footnote{Thus, \emph{subadditive} inequalities for the sequence $(-\log\P(L_n\in \Bc_\LP(\mu,\delta)))$.} in the next sections.
The additional properties of $I$ stated in Theorem~\ref{theo: main result} are properties of $\IRL$ that come with the proof of its existence. We state these properties here, and prove them in the next sections.

\begin{prop}
	\label{prop: IRL for non admissible measures}
	Assume~\ref{hyp: pseudo lsc density} and~\ref{hyp: getting fast to classes}. Let $\mu\in \Pc(\R^d)\setminus \Ac$. Then, the Ruelle-Lanford function of $(L_n)$ exists at $\mu$ and $\IRL(\mu)=\infty$.
\end{prop}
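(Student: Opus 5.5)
The plan is to show that every $\mu\in\Pc(\R^d)\setminus\Ac$ has a neighbourhood $V=\Bc_\LP(\mu,\delta)$ with $\lim_n\frac1n\log\P(L_n\in V)=-\infty$. Since $\overline s(\Bc_\LP(\mu,\delta'))\leq \overline s(V)$ for $\delta'\leq\delta$, this gives $\overline s(\mu)=-\infty$. Since $\underline s\leq\overline s$, we also get $\underline s(\mu)=-\infty$, so $\IRL(\mu)$ exists and equals $\infty$. I split into cases according to which condition of Definition~\ref{defi: admissible measure} fails. For conditions 3 and 4 the natural neighbourhood even has probability exactly $0$. Two tools recur: the portmanteau inequality $\liminf \nu_k(O)\geq\mu(O)$ for open $O$, and the fact that the classes $C_j$ are open under~\ref{hyp: pseudo lsc density}.

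\emph{Condition 2 fails.} If $\supp\mu\not\subseteq\overline C$, then $\mu(\R^d\setminus\overline C)>0$, so $\mu(C)<1$. Assumption~\ref{hyp: getting fast to classes} then directly provides a neighbourhood $V$ with $\frac1n\log\P(L_n\in V)\to-\infty$. Shrinking $V$ to a Lévy--Prokhorov ball inside it finishes this case.

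\emph{Condition 3 fails.} Suppose $\mu(C_j)>0$ for some $j$ with $\tilde p(\xinit,C_j)=0$. Then $p^i(\xinit,C_j)=0$ for every $i$, so almost surely no $X_i$ lies in $C_j$, and $L_n(C_j)=0$ almost surely. Because $C_j$ is open, portmanteau yields $\delta>0$ such that every $\nu\in\Bc_\LP(\mu,\delta)$ has $\nu(C_j)>\mu(C_j)/2>0$. Hence $\P(L_n\in\Bc_\LP(\mu,\delta))=0$ for all $n$.

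\emph{Condition 4 fails.} Suppose there are $j_1\neq j_2$ with $\mu(C_{j_1}),\mu(C_{j_2})>0$ and neither $j_1\leadsto j_2$ nor $j_2\leadsto j_1$. By the equivalent form of $\leadsto$ given after Lemma~\ref{lem: positive density}, $\tilde p(x,C_{j_2})=0$ for every $x\in C_{j_1}$, and symmetrically. For $i<k$ the Markov property gives
\begin{equation*}
	\P(X_i\in C_{j_1},X_k\in C_{j_2})=\int_{C_{j_1}}p^{k-i}(x,C_{j_2})\,\P(X_i\in\d x)=0,
\end{equation*}
and likewise with the roles of $j_1,j_2$ exchanged. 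A union bound over the finitely many pairs $(i,k)$ with $i,k\leq n$ shows that almost surely $L_n(C_{j_1})L_n(C_{j_2})=0$. Portmanteau on the open sets $C_{j_1},C_{j_2}$ gives $\delta>0$ such that every $\nu\in\Bc_\LP(\mu,\delta)$ charges both classes. So again $\P(L_n\in\Bc_\LP(\mu,\delta))=0$.

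\emph{Condition 1 fails.} This is the case I expect to be the main obstacle. After reducing to $\mu(C)=1$ (otherwise use~\ref{hyp: getting fast to classes}), inner regularity gives a compact Lebesgue-null set $N\subseteq C$ with $\mu(N)=a>0$. The plan is to take open sets $U_r\supseteq N$ with $\leb(U_r)\to0$ and note that $L_n\in\Bc_\LP(\mu,\delta)$ forces $L_n(U_r)\geq a/2$ for $\delta$ small. I would then bound $\P(L_n(U_r)\geq a/2)$ by choosing the set of at least $an/2$ visit times, at most $2^n$ ways. Each prescribed visit costs a factor $\sup_{x}p(x,U_r)\leq \sup_x\rho(x,\cdot)\,\leb(U_r)$, giving $\frac1n\log\P\leq\log2+\frac a2\log(M\leb(U_r))\to-\infty$ as $r\to0$.

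This step needs a uniform bound $M$ on $\rho(x,\cdot)$ near $N$. That bound is not among the hypotheses~\ref{hyp: pseudo lsc density} and~\ref{hyp: getting fast to classes}, and a concentrating kernel such as $X_{n+1}=X_n/2+\text{small noise}$ suggests it cannot simply be dropped. I would first try to extract such a local bound from the structure of the classes and the finiteness of the number of visit patterns. Failing that, I would import the boundedness from Assumption~\ref{hyp: pseudo uniformity} or from the setting of Theorem~\ref{theo: main result (simplified)} to close this case.
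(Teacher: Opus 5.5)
Your handling of the cases where condition 2, 3 or 4 of Definition~\ref{defi: admissible measure} fails is correct. It is in substance the paper's argument (Propositions~\ref{prop: nonadmissible: support}, \ref{prop: nonadmissible: reachability from 0} and~\ref{prop: nonadmissible: comparison of classes}). Writing it via $\P(X_i\in C_j)=p^i(\xinit,C_j)=0$ and $\P(X_i\in C_{j_1},X_k\in C_{j_2})=0$ is slightly more direct than the paper's detour through words of $\W$.

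The gap is case~1. Your count rests on $\sup_x p(x,U_r)\leq M\leb(U_r)$, which neither Assumption~\ref{hyp: pseudo lsc density} nor Assumption~\ref{hyp: getting fast to classes} provides. So as written, the proposal does not prove the statement under its hypotheses. The paper treats this case with no density bound (Proposition~\ref{prop: nonadmissible: abscont}). For $\leb(A)=0<\mu(A)$ and $f=1+a\mathbf 1_A$, one has $pf\equiv1$ because every $p(x,\cdot)\ll\leb$. Hence $\IDV(\mu)\geq\mu(A)\log(1+a)$ for all $a>0$, and the Donsker--Varadhan weak upper bound is invoked to conclude $\overline s(\mu)=-\infty$.

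Your doubt is nevertheless well founded, and that last step does not hold in this generality. Going from an upper bound to $\overline s(\mu)\leq-\IDV(\mu)$ needs weak lower semicontinuity of $\IDV$, which fails when the supremum runs over merely measurable $f$. Your additive-noise example does not concentrate on a null set, but multiplicative noise does.

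Take $d=1$, $\beta$ standard Gaussian with density $\varphi$, $\rho(0,\cdot)=\varphi$, and $\rho(x,y)=\frac12\varphi(y)+\frac1{2|x|}\mathbf 1_{\{|y|<|x|/2\}}$ for $x\neq0$. Then $\rho>0$ everywhere, so Assumption~\ref{hyp: pseudo lsc density} holds. Moreover $C=\R$ is a single class, so Assumption~\ref{hyp: getting fast to classes} is vacuous. Yet with probability at least $\beta([-1,1])2^{-(n-1)}$ one has $|X_1|\leq1$ and $|X_{k+1}|<|X_k|/2$ for all $k<n$. Hence $L_n\in\Bc_{\LP}(\delta_0,\delta)$ for large $n$, and $\underline s(\delta_0)\geq-\log2$, although $\delta_0\notin\Ac$. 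So the statement as given is false, and case~1 genuinely needs an extra hypothesis.

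If $\rho$ is bounded, as in Theorem~\ref{theo: main result (simplified)}, your counting argument is correct as it stands. Assumption~\ref{hyp: pseudo uniformity} does not bound $\rho$ (and it fails in the example above), but it also suffices after a small change. Once $\mu(C)=1$ and conditions 3 and 4 hold:
\begin{enumerate}
\item Take $N$ inside a single class $C_j$.
\item Take an admissible compact $K$ with $N\subseteq\mathrm{int}\,K$ and $\mu(\mathrm{int}\,K)\geq1-a/8$.
\item Take $U_r\downarrow N$ inside $\mathrm{int}\,K\cap C_j$.
\end{enumerate}
Integrating~\eqref{eq: existence of tau} over $y\in U_r$ gives $\sup_{x\in K}p(x,U_r)\leq c_K\sum_{i\leq\tau_K}p^i(x_2,U_r)\to0$ for a fixed $x_2\in K_j$. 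Points of $K$ whose class does not lead to $j$ have $p(x,U_r)=0$. You then count only the at least $an/4-1$ visits to $U_r$ whose preceding letter lies in $K$.
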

\begin{prop}
	\label{prop: IRL for admissible measures}
	Assume~\ref{hyp: pseudo lsc density},~\ref{hyp: pseudo uniformity} and~\ref{hyp: null border}. Then, the Ruelle-Lanford function $\IRL$ of $(L_n)$ exists at all $\mu\in \Ac$. Moreover, for all $\mu_1,\mu_2\in\Ac$ and $\lambda\in [0,1]$ such that $\lambda\mu_1+(1-\lambda)\mu_2\in \Ac$,
	\begin{equation}
		\label{eq: convexity of IRL}
		\IRL(\lambda\mu_1+(1-\lambda)\mu_2)\leq \lambda\IRL(\mu_1)+(1-\lambda)\IRL(\mu_2).
	\end{equation} 
\end{prop}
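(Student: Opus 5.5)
The plan is the subadditive method: derive a supermultiplicative inequality for suitable probabilities, then close with a Fekete-type argument. Fix $\mu\in\Ac$. By Assumption~\ref{hyp: null border} and absolute continuity, $\mu(\partial C)=0$, so $\mu(C)=1$. Then $\mu$ charges only finitely many classes up to an arbitrarily small total-variation error, and these classes are totally ordered, say $j_1\leadsto j_2\leadsto\dots\leadsto j_r$, with $\beta\leadsto j_1$.

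Given $\delta>0$, I will choose an admissible compact set $K\subseteq\bigcup_{i\le r}C_{j_i}$ with $\mu(K)\ge 1-\delta$. Assumption~\ref{hyp: pseudo uniformity} then provides $c_K$, $\tau_K$ and the function $\tau$. The basic quantity to control is the probability that a trajectory of length $n$ has $L_n\in\Bc_\LP(\mu,\delta)$, is ordered (it visits $K_{j_1}$ first, then $K_{j_2}$, and so on), and ends in $K$. Call this event's probability $a_n(\delta)$, up to modifications that let us restrict to trajectories spending almost all their time in $K$.

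The core step is the supermultiplicative inequality
\[
a_{n+m+\tau}(\delta')\ge c^{-1}\,a_n(\delta)\,a_m(\delta),\qquad \tau\le\tau_K,\ \delta'\text{ slightly larger than }\delta.
\]
To get it, I will take a word $u$ of length $n$ and a word $v$ of length $m$ that both approximate $\mu$. I cut $v$ into its pieces lying in the successive classes, the slicing step, and interleave them with the corresponding pieces of $u$, class by class, the stitching step. Each junction goes from the end of a piece in a class $j'$, with $j'\leadsto j$, to the start of a piece in class $j$. Bound~\eqref{eq: existence of tau} then lets me replace the transition density $\rho^k(x_1,y)$, which for $v$ starts from its original predecessor or from $\xinit$, by $c_K\rho^{\tau(x_2,y)}(x_2,y)$ starting from the last point $x_2$ of $u$'s piece. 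This costs at most $r$ bridges of length at most $\tau_K$ and a factor $c_K^{r}$.

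The map $(u,v)\mapsto$ (stitched word) is injective up to the bookkeeping of cut points, and there are only polynomially many choices of cut points. Its Jacobian is $1$, since it merely permutes coordinates. Integrating over the bridge letters then gives the inequality up to a polynomial factor, which is negligible at exponential scale. The empirical measure of the stitched word is a convex combination of $L_n(u)$ and $L_m(v)$ plus an $O(r\tau_K/(n+m))$ perturbation, so it lies in a slightly larger ball. Running the same construction with the two blocks approximating $\mu_1$ and $\mu_2$, in proportions $\lambda:(1-\lambda)$, gives~\eqref{eq: convexity of IRL}. The hypothesis that $\lambda\mu_1+(1-\lambda)\mu_2\in\Ac$ is exactly what makes the union of the charged classes totally ordered, so that the interleaving respects $\leadsto$.

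From the inequality, a standard subadditive lemma in the style of \cite[Lemma 6.1.11]{DZ}, handling the bounded gaps $\tau$ by monotonicity in $\delta$, yields $\liminf$ equal to $\limsup$ for $\frac1n\log a_n$. Letting $\delta\to0$ and comparing $a_n$ with $\P(L_n\in\Bc_\LP(\mu,\delta))$ gives $\underline s(\mu)=\overline s(\mu)$. The comparison has two directions. The upper direction is trivial. For the lower direction, an arbitrary trajectory with $L_n$ near $\mu$ must spend most of its time in $K$, and its non-ordered or out-of-$K$ excursions must be removed at subexponential cost; this is where I will use~\eqref{eq: existence of tau} again, restarting from $\xinit$ via $\beta\leadsto j_1$.

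The main obstacle I expect is the slicing/stitching bookkeeping. A trajectory approximating $\mu$ may wander back and forth between $K$ and $C\setminus K$ within one class, and leave and re-enter classes. So the cut points must be chosen measurably, say at first entrances to $K_{j}$. I must also check that the change of variables gives a measure inequality and not merely a pointwise density inequality. A further technical point is that $\tau$ is only measurable and only valid almost everywhere, which requires integrating the bound over the bridge letters rather than applying it pointwise.
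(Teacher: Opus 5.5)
Your local ingredients match the paper's: slicing at the first and last visits to each $K_j$, class-by-class interleaving, trading the integrated-out transition $\rho^k(x_1,y)$ for a bridge $c_K\rho^{\tau(x_2,y)}(x_2,y)$, and polynomial counting of cut lengths and bridge lengths. The global scheme you build on them does not close, however.

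\textbf{Where it fails.} The step that fails is your claim that the stitched word has length $n+m+\tau$ with $\tau$ bounded, and an empirical measure within $O(r\tau_K/(n+m))$ of the convex combination of $L[u]$ and $L[v]$.
\begin{itemize}
\item[(i)] Assumption~\ref{hyp: pseudo uniformity} only controls densities at points of $K_j$. So you must integrate out everything between the last visit to $K_{j-1}$ and the first visit to $K_j$, as well as the initial segment and the tail.
\item[(ii)] These segments have unbounded lengths. For a word with $\dlp(L[u],\mu)\le\delta$ they can carry up to a fraction $\epsilon+\delta$ of the letters.
\item[(iii)] Hence the output length ranges over an interval of size of order $(\epsilon+\delta)(n+m)$. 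The empirical measure moves by $O(\epsilon+\delta)$, which is the $f(\epsilon,\delta)$ of Corollary~\ref{coro: advanced geographic ineq coupling map}, not by $O(1/n)$.
\item[(iv)] In a pairwise inequality iterated through a Fekete-type lemma, this error is paid at every merge, so the radius grows with the number of merges.
\end{itemize}

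\textbf{Why restricting $a_n$ does not help.} A condition like ``at least $(1-\eta)n$ letters in $K$'' still allows $\eta n$ discarded letters. A condition like ``all but $O(1)$ letters in $K$'' makes your comparison step pay a factor $c_K$ per removed excursion. Within-class excursions out of $K$ are typically linearly many, so that cost is exponential. The only restriction that works keeps within-class excursions and bounds only the inter-class gaps. But then the comparison step is the full coupling estimate anyway. You would also still need a bounded-gap Fekete lemma with a drifting radius, and your ``ends in $K$'' condition forbids padding, so lengths not reached by merging stay uncontrolled.

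\textbf{The missing idea.} The paper couples $N=\lfloor T/(n+r\tau_K)\rfloor$ arbitrary words of $A_n(\delta)$ simultaneously, reading the $N\times r$ matrix of slices column by column. The result is a set of words of exactly length $T$, with a free final segment that absorbs all length slack, including the discarded excursions. Lemma~\ref{theo: decoupling ineq} then gives $\P(A_n(\delta))^N\le C_{n,T}\,\P(B_T(\epsilon,\delta))$. Here the $O(\epsilon+\delta)$ error is paid only once, and $\frac1T\log C_{n,T}\to0$ as $T\to\infty$ and then $n\to\infty$. Taking $\liminf_T$ and then $\limsup_n$ gives $\overline s(\Bc_{\LP}(\mu,\delta))\le\underline s(\Bc_{\LP}(\mu,f(\epsilon,\delta)))$ directly. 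No restricted quantity, comparison step or Fekete lemma is needed.

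It is the $\sup_{x_1}\sup_k$ in Assumption~\ref{hyp: pseudo uniformity} that allows arbitrary words to be fed in. Ordering is automatic for $u\in\W$, because $\leadsto$ is antisymmetric. So your ``ordered'' and ``ends in $K$'' conditions are superfluous, and the latter actually blocks the padding.

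Your treatment of general $\lambda$, with blocks in proportion $\lambda:(1-\lambda)$, is fine once existence is known and you pad to a fixed length, because there the error is incurred only once. It replaces the paper's midpoint inequality followed by dyadic approximation and lower semicontinuity. Finally, you need about $2r$ bridges, not $r$, since every piece of either word after the first gets a new predecessor.
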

\begin{prop}
	\label{prop: IRL linearity}
	Assume~\ref{hyp: pseudo lsc density},~\ref{hyp: pseudo uniformity} and~\ref{hyp: null border}. 
	For all $\mu_1,\mu_2\in\Ac$ and $\lambda\in [0,1]$ such that $\lambda\mu_1+(1-\lambda)\mu_2\in \Ac$, if no communicating class $C_j$ satisfies $\mu_1(C_j)\mu_2(C_j)>0$, then
	\begin{equation}
		\label{eq: linearity 2}
		\IRL(\lambda\mu_1+(1-\lambda)\mu_2)\geq\lambda\IRL(\mu_1)+(1-\lambda)\IRL(\mu_2).
	\end{equation}
\end{prop}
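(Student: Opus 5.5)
The plan is to prove the reverse inequality to~\eqref{eq: convexity of IRL} by a \emph{slicing} argument. Up to subexponential factors, I would bound the probability that $L_n$ is close to $\mu:=\lambda\mu_1+(1-\lambda)\mu_2$ by the product of the probabilities that two shorter chains, of lengths about $\lambda n$ and $(1-\lambda)n$, have empirical measures close to $\mu_1$ and $\mu_2$. By Proposition~\ref{prop: IRL for admissible measures}, $\IRL$ exists at $\mu_1$, $\mu_2$ and $\mu$. It therefore suffices to show
\begin{equation*}
\overline s(\mu)\leq \lambda\,\overline s(\mu_1)+(1-\lambda)\,\overline s(\mu_2).
\end{equation*}
The degenerate cases $\lambda\in\{0,1\}$ are trivial, so I take $\lambda\in(0,1)$.

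\textbf{Step 1: reduction to finitely many classes.} Since $\mu\in\Ac$, the set $\{j\mid\mu(C_j)>0\}$ is totally ordered by $\leadsto$. Because no class is charged by both $\mu_1$ and $\mu_2$, this set splits into $\Jc^1=\{j\mid\mu_1(C_j)>0\}$ and $\Jc^2=\{j\mid\mu_2(C_j)>0\}$. These two sets may interleave in the order. By Assumption~\ref{hyp: null border}, $\mu$ gives full mass to $C$. Given $\epsilon>0$, I choose finitely many classes and a compact $K\subseteq C$, meeting exactly those classes, such that $\mu(K)\geq1-\epsilon$. Then $K$ is admissible: $\Jc_K$ is finite, and it is totally ordered because every class of $K$ satisfies $\beta\leadsto j$. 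So Assumption~\ref{hyp: pseudo uniformity} provides $c_K$ and $\tau_K$.

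\textbf{Step 2: slicing a typical trajectory.} Consider a word $u\in\W_n$ with $L_n(u)\in\Bc_\LP(\mu,\delta)$.
\begin{itemize}
\item Since $\leadsto$ is a partial order, a trajectory cannot return to a class it has left. So the successive visits of $u$ to the classes of $K$ occur in the total order, and $u$ decomposes into at most $m=|\Jc_K|$ consecutive blocks, one for each class.
\item I cut $u$ at the first entrance into each block. There are at most $n^{m}$ choices of cut times.
\item I concatenate the blocks belonging to $\Jc^1$ into a word $v^1$ and those belonging to $\Jc^2$ into a word $v^2$.
\item Letters not in $K$ (outside $C$, on $\partial C$, or in uncharged classes) carry only $O(\epsilon+\delta)$ of the empirical mass. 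They are kept inside the blocks or discarded. This moves the empirical measures only by $O(\epsilon+\delta)$ in $\dlp\leq\dtv$.
\end{itemize}
Hence, for $n$ large, $L(v^1)$ is close to $\mu_1$ and $L(v^2)$ is close to $\mu_2$. The lengths satisfy $|v^1|/n\approx\lambda$ and $|v^2|/n\approx 1-\lambda$, up to $O(\epsilon+\delta)$.

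\textbf{Step 3: density bound and conclusion.} The density $\rho(\xinit,u)$ factorizes along the blocks. At each junction, the transition from the end of one block to the first letter $y\in K_j$ of the next is an arbitrary-length gap $\rho^k(x_1,y)$. Assumption~\ref{hyp: pseudo uniformity}, in its word form noted after~\eqref{eq: existence of tau}, bounds this by $c_K\rho^{\tau}(x_2,y)$ with $\tau\leq\tau_K$. I apply it with $x_2$ equal to the last letter of the preceding block of the same family. For the first block of each family I take $x_2=\xinit$, which is allowed since $\beta\leadsto j$ and $K_\beta=\{\xinit\}$ lies in $K_j^-$. Thus each of $v^1$ and $v^2$, after inserting at most $m$ bridges of length at most $\tau_K$, becomes a genuine trajectory started at $\xinit$. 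Its density is at least $c_K^{-m}$ times the product of the corresponding block densities. Integrating over $u$ and summing over the cut times and bridge lengths gives
\begin{equation*}
\P(L_n\in\Bc_\LP(\mu,\delta))\leq n^{m}\tau_K^{2m}c_K^{2m}\sum_{n_1+n_2\approx n}\P(L_{n_1}\in\Bc_\LP(\mu_1,\delta'))\,\P(L_{n_2}\in\Bc_\LP(\mu_2,\delta')),
\end{equation*}
where $n_1/n$ lies within $O(\epsilon+\delta)$ of $\lambda$. Taking $\frac1n\log$, then $\limsup_n$, and then letting $\delta,\epsilon\to0$ gives the desired inequality, using the existence of $\IRL$ at $\mu_1$ and $\mu_2$.

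\textbf{Main obstacle.} I expect the hardest part to be the measurability and change-of-variables bookkeeping in Step~3. The slicing-and-bridging map must be shown to push Lebesgue measure forward with controlled multiplicity. The removal of letters outside $K$ must also be shown not to create unboundedly many junctions. For the latter, I would use the no-return property of the order. If a letter outside $K$ lies between two visits to the same class, it is kept inside that block and is not cut. This keeps the number of cuts at most $m$.
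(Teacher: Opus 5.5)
This is essentially the paper's own proof. Your slicing of $u$ into class blocks, regrouping by family, bridging each family from $\xinit$ via Assumption~\ref{hyp: pseudo uniformity}, and factoring the integral is exactly the decoupling map $\decou_n$ and Proposition~\ref{prop: proba ineq decoupling map}; summing over variable lengths $n_1,n_2$ instead of padding to fixed $T_1,T_2$ changes nothing. The step you assert too quickly is that $L[v^1]$ is close to $\mu_1$: it needs $K$ to be the closed $\delta$-neighbourhood of a compact $K^0$ at distance more than $2\delta$ from the complement of the chosen classes (for an arbitrary compact $K$, weak closeness does not control $L[u](K)$), plus the separation estimate that $\mu_2$ gives mass at most $\epsilon$ to the $\delta$-neighbourhood of the family-$1$ classes, which is Lemma~\ref{lem: prop dlp on classes}; note also that each block needs both an entrance and an exit cut, so there are about $n^{2m+1}$ configurations rather than $n^m$, which is still harmless.
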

Combining Propositions~\ref{prop: IRL for non admissible measures},~\ref{prop: IRL for admissible measures} and~\ref{prop: IRL linearity} yields Theorem~\ref{theo: main result}. 
Note that Propositions~\ref{prop: IRL for non admissible measures} and~\ref{prop: IRL for admissible measures} alone are already sufficient to establish the weak LDP, while Proposition~\ref{prop: IRL linearity} is only needed to derive the last property of the rate function.
We prove Proposition~\ref{prop: IRL for non admissible measures} in Section~\ref{section: non admissible}.
We prove Proposition~\ref{prop: IRL for admissible measures} in Section~\ref{section: coupling and decoupling}, using the tools provided in Section~\ref{section: slicing and stitching}.
We prove Proposition~\ref{prop: IRL linearity} in Section~\ref{section: decoupling}, using the tools provided in Section~\ref{section: slicing and stitching}.

\subsection{The Ruelle-Lanford function for non-admissible measures}
\label{section: non admissible}
\begin{prop}
	\label{prop: nonadmissible: abscont}
	Let $\mu\in \Pc(\R^d)$. Suppose that $\mu$ is not absolutely continuous with respect to the Lebesgue measure. Then, $\IRL(\mu)$ exists and $\IRL(\mu) =\infty$.
\end{prop}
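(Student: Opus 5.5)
Since $\mu$ is not absolutely continuous, there is a Borel set $N\subseteq\R^d$ with $\leb(N)=0$ and $\mu(N)=a>0$. It suffices to show $\overline s(\mu)=-\infty$, because $\underline s(\mu)\leq\overline s(\mu)$ and then both equal $-\infty$. The plan is to find a small neighbourhood of $\mu$ where $L_n$ must put mass at least $a/2$ on a set of tiny Lebesgue measure, and to bound the probability of this with a union bound over the times at which the chain visits that set.

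\textbf{Neighbourhood.} By inner regularity, pick a compact $F\subseteq N$ with $\mu(F)>a/2$. By outer regularity of $\leb$, for every $\eta>0$ there is an open set $U_\eta\supseteq F$ with $\leb(U_\eta)\leq\eta$, and we may take $U_\eta\supseteq F^{r}$ (the $r$-enlargement) for some $r=r(\eta)>0$, using compactness of $F$. If $\dlp(L_n,\mu)<\delta\leq\min(r,a/4)$, then $L_n(F^\delta)\geq\mu(F)-\delta\geq a/4$. So, on $\Bc_\LP(\mu,\delta)$, at least $\lceil an/4\rceil$ of the points $X_1,\dots,X_n$ lie in $U_\eta$.

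\textbf{Probability bound.} This is the main obstacle: the chain could concentrate mass near $U_\eta$ without a density bound. Without boundedness of $\rho$, a plain union bound fails, since $p(x,U_\eta)$ need not be small uniformly in $x$. What holds for each fixed $x\in\Rxinit$ is $p(x,U_\eta)\to0$ as $\eta\to0$ by absolute continuity, but not uniformly. To get around this, I would work with the law of the whole trajectory instead. The event $\{L_n\in\Bc_\LP(\mu,\delta)\}$ is contained in $\bigcup_{S}\{X_i\in U_\eta,\ i\in S\}$, where $S$ ranges over subsets of $\{1,\dots,n\}$ of size $m=\lceil an/4\rceil$, and there are at most $2^n$ such $S$. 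For each $S$, I would apply the Markov property at the successive times in $S$, giving
$$\P(X_i\in U_\eta,\ i\in S)\leq\Big(\sup_{x}p^{k}(x,U_\eta)\Big)^{m}$$
for suitable gaps $k$. This again needs uniformity in $x$. To obtain it, I would restrict to a compact set of starting points. Using tightness of the single laws, or rather truncating with $L_n(B_R^c)$ small on the neighbourhood (where $\mu(B_R^c)$ is small), the relevant points $X_{i-1}$ lie in a compact set. Uniformity should then follow from a Dini/Egorov-type argument: the functions $x\mapsto p(x,U_\eta)$ decrease to $0$ as $\eta\downarrow0$ along a nested sequence. This step is where I expect delicacy; if uniform smallness cannot be extracted from the stated assumptions, the fallback is that the sets $U_\eta$ can be chosen adapted to the kernel, via an Egorov argument on compacts.

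\textbf{Conclusion.} Granting $\sup_{x\in K}p(x,U_\eta)\leq\epsilon(\eta)\to0$, the bound becomes
$$\P(L_n\in\Bc_\LP(\mu,\delta))\leq 2^{n}\,\epsilon(\eta)^{an/8}.$$
Hence $\overline s(\Bc_\LP(\mu,\delta))\leq\log2+\tfrac a8\log\epsilon(\eta)$. Letting $\delta\to0$ and then $\eta\to0$ gives $\overline s(\mu)=-\infty$, so $\IRL(\mu)$ exists and equals $\infty$.
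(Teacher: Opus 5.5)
\textbf{There is a genuine gap, at exactly the step you flag, and it cannot be closed.} Your union bound needs $\sup_{x\in K}p(x,U_\eta)\to0$. The only hypothesis available for this statement is that each $p(x,\cdot)$ is absolutely continuous, and that does not give it.

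\emph{Why your two tools fail.} Dini's theorem would need $x\mapsto p(x,U_\eta)$ to be upper semicontinuous, and nothing provides this. Even weak continuity of $x\mapsto p(x,\cdot)$ gives only lower semicontinuity for open $U_\eta$. Egorov's theorem gives uniformity only off a set that is small for some fixed reference measure, and nothing keeps the states $X_{i-1}$ preceding the visits to $U_\eta$ out of that set. Since $U_\eta$ must contain $F^{r}$, adapting it to the kernel does not help either.

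\emph{The statement is false in this generality.} Take $d=1$, $\beta$ uniform on $(-1,1)$, $p(x,\cdot)$ uniform on $(-|x|/2,|x|/2)$ for $x\neq0$, and $p(0,\cdot)=\beta$. All $p(x,\cdot)$ are absolutely continuous, yet almost surely $0<|X_n|<2^{1-n}$ for all $n$. So at most $1+\log_2(1/\delta)$ of $X_1,\dots,X_n$ lie outside $(-\delta,\delta)$, and $\P(L_n\in\Bc_{\LP}(\delta_0,2\delta))=1$ for large $n$. Hence $\IRL(\delta_0)$ exists and equals $0$. Here $\sup_{|x|\leq1}p(x,U)=1$ for every neighbourhood $U$ of $0$, which is precisely the uniformity you are missing.

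The paper's standing assumptions do not rescue the statement. Let $g$ be a positive continuous probability density, $\beta$ uniform on $(-1/2,1/2)$, $\rho(x,y)=\frac{1-|x|}{|x|}\mathbf 1_{\{|y|<|x|/2\}}+|x|g(y)$ for $0<|x|<1$, and $\rho(x,y)=g(y)$ otherwise. Then $\rho>0$ on $\R\times\R$, so Assumptions~\ref{hyp: pseudo lsc density}, \ref{hyp: null border} and~\ref{hyp: getting fast to classes} hold with $C=\R$. Still, with probability at least $\prod_{n\geq1}(1-2^{-n})>0$ the $g$-component is never selected, so $|X_n|<2^{-n}$ for all $n$ and $L_n\to\delta_0$.

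\emph{The paper's route has the same problem.} It takes $f=a\mathbf 1_A+1$, notes $pf\equiv1$, concludes $\IDV(\mu)\geq\mu(A)\log(1+a)=\infty$, and then invokes a general weak upper bound with rate $\IDV$. The uniformity in $x$ is thereby delegated to the cited result. In the examples above, that same computation gives $\IDV(\delta_0)=\infty$ while $\overline s(\delta_0)=0$. So the implication `$\IDV(\mu)=\infty\Rightarrow\overline s(\mu)=-\infty$' fails there too.

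\emph{What is needed, and how your scheme then works.} Some uniform absolute continuity of the kernels is indispensable; with it, your argument goes through.
\begin{itemize}
\item If $\rho$ is bounded, as in Theorem~\ref{theo: main result (simplified)}, then $p(x,U_\eta)\leq\|\rho\|_\infty\eta$ for all $x\in\Rxinit$, and no truncation is needed. Condition one step at a time, $\P(X_i\in U_\eta\mid X_1,\dots,X_{i-1})=p(X_{i-1},U_\eta)$; no gaps $k$ are required. This gives $\overline s(\mu)\leq\log2+\frac a4\log(\|\rho\|_\infty\eta)\to-\infty$.
\item Under Assumption~\ref{hyp: pseudo uniformity}, take an admissible compact $K$ and $\beta\leadsto j$. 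Integrating~\eqref{eq: existence of tau} over $y\in A$ with $x_2=\xinit$ gives $\sup_{x\in K_j^-}p(x,A)\leq c_K\sum_{i\leq\tau_K}p^i(\xinit,A)$ for Borel $A\subseteq K_j$. This is uniformly small once $\leb(A)$ is small. Combine it with three further ingredients:
\begin{itemize}
\item Assumption~\ref{hyp: getting fast to classes} when $\mu(C)<1$;
\item the fact that $p(x,C_j)=0$ for $x$ in a class $j'\not\leadsto j$;
\item your truncation, counting indices $i$ with $X_{i-1}$ in a compact subset of $C$ and $X_i\in U_\eta$.
\end{itemize}
Your union bound then goes through under the hypotheses of Theorem~\ref{theo: main result}.
\end{itemize}
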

\begin{proof}
	Let $\IDV$ denote the Donsker-Varadhan entropy:
	\begin{equation}
		\label{eq: DV entropy}
		\IDV(\mu)=\sup_{f}\int_{\R^d}\log \frac{f(x)}{pf(x)}\mu(\d x),
	\end{equation}
	where the supremum is taken over the set of measurable functions $f:\R^d\to (0,\infty)$ satisfying $\epsilon \leq f\leq 1/\epsilon$ for some $\epsilon >0$. It is well known 
	\cite[Theorems 3.2, 4.1]{deacosta2022} that the large deviations weak upper bound is always satisfied with rate function $\IDV$. 
	Hence \cite[Lemma 4.1.24]{DZ} yields the following implication: if $\IDV(\mu)=\infty$, then $\overline s(\mu)=-\infty$, further implying that $\IRL(\mu)$ exists and is infinite.
	Let us show that $\IDV(\mu)=\infty$.
	Let $A$ be a Borel set such that $\mu(A)>0$ and $\leb(A)=0$. Let $a>0$ and $f(x)=a\mathbf 1_A(x)+1$. This defines a function that is measurable, bounded, and bounded away from $0$. We have
	\begin{equation*}
		pf(x)=a\int_A\rho(x,y)\leb(\d y)+1=1.
	\end{equation*}
	Thus,
	\begin{equation*}
		\int_{\R^d}\log \frac {f(x)}{pf(x)}\mu(\d x)=\int_{\R^d}\log f(x)\mu (\d x)=\mu(A)\log (1+a),
	\end{equation*}
	which can be arbitrarily large. This shows that the quantity in the supremum of~\eqref{eq: DV entropy} can be arbitrarily large. Hence $\IDV(\mu)=\infty$, which concludes the proof.
\end{proof}
\begin{prop}
\label{prop: nonadmissible: support}
Assume~\ref{hyp: getting fast to classes}. Let $\mu\in \Pc(\R^d)$ be such that $\supp\mu\not \subseteq \overline \C$. Then, $\IRL(\mu)$ exists and $\IRL(\mu)=\infty$. 
\end{prop}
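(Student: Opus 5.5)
The plan is to reduce the statement to Assumption~\ref{hyp: getting fast to classes}. Since $\underline s(\mu)\leq \overline s(\mu)$ always holds, it suffices to show $\overline s(\mu)=-\infty$. Then $\underline s(\mu)=-\infty$ as well, so $\IRL(\mu)$ exists and equals $\infty$.

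First, turn the support condition into a mass condition. If $\supp\mu\not\subseteq\overline C$, pick $x\in\supp\mu\setminus\overline C$. Since $\overline C$ is closed, some open ball $B$ around $x$ satisfies $B\cap\overline C=\emptyset$. By definition of the support, $\mu(B)>0$. As $C\subseteq\overline C$ is disjoint from $B$, we get $\mu(C)\leq 1-\mu(B)<1$. (Here $C$ is a Borel set; under Assumption~\ref{hyp: pseudo lsc density} it is a union of open sets. In any case only $\mu(C)<1$ is needed, and $\mu(\overline C)<1$ already forces it.)

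Next, apply Assumption~\ref{hyp: getting fast to classes} to $\mu$. This gives a weak neighborhood $V$ of $\mu$ with $\lim_n \frac1n\log\P(L_n\in V)=-\infty$. Because $\dlp$ metrizes the weak topology, there is $\delta_0>0$ with $\Bc_{\LP}(\mu,\delta)\subseteq V$ for all $\delta\leq\delta_0$. Monotonicity of probability then yields $\overline s(\Bc_{\LP}(\mu,\delta))\leq \overline s(V)=-\infty$ for such $\delta$. Letting $\delta\to0$ gives $\overline s(\mu)=-\infty$, and therefore $\underline s(\mu)=-\infty$ too.

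There is no real obstacle here. The only point needing care is the first step, making sure the mass condition $\mu(C)<1$ required by the assumption follows from the support condition. That is immediate from the definition of support as the smallest closed set of full measure, since the complement of $\overline C$ is an open set meeting $\supp\mu$.
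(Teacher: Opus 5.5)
Your proof is correct and follows the same route as the paper. The support condition forces $\mu(C)<1$, and Assumption~\ref{hyp: getting fast to classes} then gives a neighborhood of $\mu$ with super-exponentially small probability, so $\overline s(\mu)=\underline s(\mu)=-\infty$. You also spell out the two steps the paper leaves implicit: passing from the support condition to the mass of $C$, and shrinking the weak neighborhood $V$ to small L\'evy--Prokhorov balls.
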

\begin{proof}
Since $\supp\mu\not \subseteq \overline \C$, we have $\mu(C)<1$, hence Assumption~\ref{hyp: getting fast to classes} applies.
By definition of $\IRL$, we have $\IRL(\mu)=\infty$.
\end{proof}
\begin{prop}
	\label{prop: nonadmissible: reachability from 0}
Assume~\ref{hyp: pseudo lsc density}. Let $\mu\in \Pc(\R^d)$ be absolutely continuous with respect to the Lebesgue measure. Assume that $\mu(C_{j})>0$ for some $j\in\Jc$ satisfying $\beta\not\leadsto j$. Then, $\IRL(\mu)$ exists and $\IRL(\mu)=\infty$.
\end{prop}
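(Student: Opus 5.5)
The plan is to show that for some $\delta>0$ the probability $\P(L_n\in \Bc_{\LP}(\mu,\delta))$ is exactly zero for every $n\geq 1$. Then $\overline s(\Bc_{\LP}(\mu,\delta'))=\underline s(\Bc_{\LP}(\mu,\delta'))=-\infty$ for all $\delta'\leq\delta$. Hence $\underline s(\mu)=\overline s(\mu)=-\infty$, so $\IRL(\mu)$ exists and equals $\infty$. The argument has two ingredients: an almost sure statement about the chain, and a topological statement about $\mu$.

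\textbf{Step 1: the chain never enters $C_j$.} By definition, $\beta\not\leadsto j$ means $\tilde p(\xinit,C_j)=\sum_{k\geq1}2^{-k}p^k(\xinit,C_j)=0$. Hence $p^k(\xinit,C_j)=0$ for every $k\geq 1$, that is, $\P(X_k\in C_j)=0$ for all $k$. A countable union bound then gives
\begin{equation*}
	\P\big(\exists i\leq n,\ X_i\in C_j\big)\leq\sum_{i=1}^n p^i(\xinit,C_j)=0 .
\end{equation*}
So almost surely $L_n(C_j)=0$ for every $n$.

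\textbf{Step 2: measures near $\mu$ charge $C_j$.} Under Assumption~\ref{hyp: pseudo lsc density}, $C_j$ is open. Indeed, $\tilde\rho^{-1}((0,\infty))$ is open by Lemma~\ref{lem: positive density}. So every $x\in C_j$ has a neighborhood $U$ on which $\tilde\rho(x,\cdot)$ and $\tilde\rho(\cdot,x)$ are positive. Transitivity (Lemma~\ref{lem: positive density}, point 4) and maximality of $C_j$ then place $U$ inside $C_j$; this check needs a little care but is routine.

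Since $\mu(C_j)>0$ and $\mu$ is inner regular, pick a compact $K\subseteq C_j$ with $\mu(K)>\mu(C_j)/2$. Because $K$ is compact and $C_j$ is open, there is $\eta>0$ whose open $\eta$-neighborhood $K^\eta$ lies inside $C_j$. Set $\delta<\min(\eta,\mu(C_j)/2)$ and take $\nu\in\Bc_{\LP}(\mu,\delta)$. The definition of the Lévy--Prokhorov metric gives $\mu(K)\leq \nu(K^\delta)+\delta$. Therefore
\begin{equation*}
	\nu(C_j)\geq \nu(K^\delta)\geq \mu(K)-\delta>0 .
\end{equation*}

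\textbf{Conclusion.} Combining the two steps, $\{L_n\in\Bc_{\LP}(\mu,\delta)\}\subseteq\{L_n(C_j)>0\}$, and the latter event is $\P$-null. So $\log\P(L_n\in \Bc_{\LP}(\mu,\delta))=-\infty$ for all $n$, which gives the claim.

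The only step needing real care is the openness of $C_j$ in Step 2, which is where Assumption~\ref{hyp: pseudo lsc density} enters. The absolute continuity hypothesis on $\mu$ does not seem to be needed in this argument.
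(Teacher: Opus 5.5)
Your proof is correct and follows the same skeleton as the paper. Measures $\delta$-close to $\mu$ must charge $C_j$; your Step~2 is essentially a reproof of Lemma~\ref{lem: dlp small implies similar support} with $O=C_j$. Meanwhile the chain cannot visit $C_j$, so $\P(L_n\in\Bc_{\LP}(\mu,\delta))=0$ for every $n$.

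The difference lies in how the second fact is obtained. The paper argues by contraposition and pointwise. If $\overline s(\mu)>-\infty$, some word $u\in\W$ (i.e.\ with $\rho(\xinit,u)>0$) has $L[u]$ in the ball, and hence has a letter $x\in C_j$. The third point of Lemma~\ref{lem: positive density} then gives $\rho^k(\xinit,x)>0$. Assumption~\ref{hyp: pseudo lsc density} (openness of the positivity set of $\rho^k$, plus openness of $C_j$) upgrades this to $p^k(\xinit,C_j)>0$.

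You instead read $\beta\not\leadsto j$ directly as $\P(X_k\in C_j)=p^k(\xinit,C_j)=0$ for all $k$ and take a union bound. This bypasses the density argument entirely. In your version, Assumption~\ref{hyp: pseudo lsc density} enters only through the openness of $C_j$. The paper asserts that openness without proof, and you justify it correctly via transitivity and maximality. The paper's pointwise route buys uniformity: it is the same template reused for Proposition~\ref{prop: nonadmissible: comparison of classes}.

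Your closing remark is also right: absolute continuity of $\mu$ is not needed here. Lemma~\ref{lem: dlp small implies similar support} only uses inner regularity, which every Borel probability measure on $\R^d$ has. In any case, the non-absolutely-continuous case is covered by Proposition~\ref{prop: nonadmissible: abscont}.
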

\begin{proof}
	We prove the contrapositive. Assume $\overline s(\mu)>-\infty$.
	Let $\delta$ be as in Lemma~\ref{lem: dlp small implies similar support} with $O=C_j$. By definition of $\underline s$, there must exist a word $u\in \W$ such that $L[u]\in \Bc_\LP(\mu, \delta)$. 
	By Lemma~\ref{lem: dlp small implies similar support}, $L[u](C_j)>0$, thus at least one letter of $u$ belongs to $C_j$. Up to taking a prefix of $u$ instead of $u$, we can assume that the last letter of $u$, denoted by $x$, belongs to $C_j$. By Lemma~\ref{lem: positive density}, we have
	$\rho^k(\xinit,x)>0$, where $k=|u|$. Assumption~\ref{hyp: pseudo lsc density} further yields $p^k(\xinit, C_j)>0$; in other words, $\beta\leadsto j$.
\end{proof}
\begin{prop}
\label{prop: nonadmissible: comparison of classes}
Assume~\ref{hyp: pseudo lsc density}. Let $\mu\in \Pc(\R^d)$ be absolutely continuous with respect to the Lebesgue measure. Assume that $\mu(C_{j_1})\mu(C_{j_2})>0$ for some $j_1,j_2\in\Jc$ that are not comparable under $(\leadsto)$. Then, $\IRL(\mu)$ exists and $\IRL(\mu)=\infty$.
\end{prop}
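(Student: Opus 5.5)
We follow the pattern of the proof of Proposition~\ref{prop: nonadmissible: reachability from 0} and prove the contrapositive. Assume $\overline s(\mu)>-\infty$; we will show that $j_1$ and $j_2$ are comparable. It suffices to show $\overline s(\mu)=-\infty$ in the incomparable case, since then $\underline s(\mu)\leq\overline s(\mu)=-\infty$ and $\IRL(\mu)=\infty$ exists.

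\emph{Step 1: a trajectory visiting both classes.} Apply Lemma~\ref{lem: dlp small implies similar support} twice, with $O=C_{j_1}$ and with $O=C_{j_2}$, both open by~\ref{hyp: pseudo lsc density}. This uses that $\mu(C_{j_1})>0$ and $\mu(C_{j_2})>0$, and we take the smaller of the two radii $\delta$. Since $\overline s(\Bc_\LP(\mu,\delta))>-\infty$, the probability $\P(L_n\in\Bc_\LP(\mu,\delta))$ is positive for some (indeed infinitely many) $n$. Because $\P$ is carried by $\W$ up to a null set, there is a word $u\in\W_n$ with $L[u]\in \Bc_\LP(\mu,\delta)$. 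The lemma then gives $L[u](C_{j_1})>0$ and $L[u](C_{j_2})>0$. So some letter $u_a\in C_{j_1}$ and some letter $u_b\in C_{j_2}$, and without loss of generality $a<b$ (the other case is symmetric).

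\emph{Step 2: positivity of the transition from $u_a$ to $u_b$.} Since $\rho(\xinit,u)>0$, every factor $\rho(u_i,u_{i+1})$ is positive. If $b=a+1$, then $\rho(u_a,u_b)>0$. Otherwise write the subword as $u_a v u_b$ with $v=u_{a+1}\cdots u_{b-1}$; then $\rho(u_a,vu_b)>0$. By Lemma~\ref{lem: positive density}(3), $\rho^{b-a}(u_a,u_b)>0$, hence $\tilde\rho(u_a,u_b)>0$. By Lemma~\ref{lem: positive density}(2) (or point 1), $\rho^{b-a}(u_a,\cdot)$ is positive on an open neighbourhood of $u_b$, which we may intersect with the open set $C_{j_2}$. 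This gives $p^{b-a}(u_a,C_{j_2})>0$, so $\tilde p(u_a,C_{j_2})>0$ with $u_a\in C_{j_1}$. By the existential form of the definition of $\leadsto$ (the equivalence stated after Lemma~\ref{lem: positive density}), $j_1\leadsto j_2$. The classes are therefore comparable, a contradiction.

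\emph{Main obstacle.} The only delicate point is Step 1: extracting a single word $u$ whose empirical measure charges both classes, using one $\delta$ valid for both. Lemma~\ref{lem: dlp small implies similar support} is stated for a single open set $O$, but taking the minimum of the two radii handles this. One must also make sure that the word is taken in $\W$, i.e.\ with positive density, which follows because a positive-probability event contains a point of positive density. Absolute continuity of $\mu$ is needed only to apply that lemma, as in Proposition~\ref{prop: nonadmissible: reachability from 0}.
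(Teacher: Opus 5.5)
Your proof is correct and follows essentially the same route as the paper's: you argue by contrapositive, use Lemma~\ref{lem: dlp small implies similar support} to extract a word $u\in\W$ with letters $x_1\in C_{j_1}$ and $x_2\in C_{j_2}$, and then use Lemma~\ref{lem: positive density} together with the openness from Assumption~\ref{hyp: pseudo lsc density} to get $p^{k}(x_1,C_{j_2})>0$, hence $j_1\leadsto j_2$ (or the reverse, depending on the order of the letters). You also make explicit two points the paper leaves implicit: taking the smaller of the two radii given by the lemma, and why an event of positive probability must contain a word of positive density.
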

\begin{proof}
We prove the contrapositive. Assume $\overline s(\mu)>-\infty$. 
We use Lemma~\ref{lem: dlp small implies similar support} to find $\delta>0$ such that all $\nu\in \Bc_\LP (\mu, \delta)$ satisfy $\nu(C_{j_1})\nu(C_{j_2})>0$. By definition of $\overline s$, there must exist a word $u\in \W$ such that $L[u]\in \Bc_\LP(\mu, \delta)$.
By Lemma~\ref{lem: dlp small implies similar support}, $L[u](C_{j_1})>0$ and $L[u](C_{j_2})>0$, thus both $C_{j_1}$ and $C_{j_2}$ contain at least one letter of $u$. We denote by $x_1$ and $x_2$ two such letters and we assume, for instance, that $x_1$ comes before $x_2$ in $u$. Since $u\in \W$, Lemma~\ref{lem: positive density} shows us that $\rho^{k+1}(x_1,x_2)>0$ where $k$ is the number of letters between $x_1$ and $x_2$ in $u$. 
By Assumption~\ref{hyp: pseudo lsc density}, we further have $p^{k+1}(x_1,C_{j_2})>0$, thus $j_1\leadsto j_2$. If we assumed that $x_2$ came before $x_1$ in $u$ instead, we would have obtained $j_2\leadsto j_1$. In both cases, $j_1$ and $j_2$ are comparable.
\end{proof}
\subsection{Slicing and stitching words}
In this section, we introduce the \emph{slicing}, \emph{stitching}, \emph{coupling} and \emph{decoupling} maps, which are operations on words. Like in \cite{daures2025}, these operations will allow us to recombine portions of words together in order to build a long word from several small ones.
\label{section: slicing and stitching}
Given $u\in \Wc\setminus \{e\}$, we set 
\begin{equation*}
	L[u]=\frac 1{|u|}\sum_{j=1}^{|u|}\delta_{u_j}.
\end{equation*}
Given a sequence of non-empty words\footnote{Except when explicitly specified, exponents on words do not denote powers, but simply a numbering. Indices are reserved for numbering the letters of a given word: $u^i_j$ is the $j$-th letter of the $i$-th word. 
	Underlined letters will always represent lists of words; $\underline v$ is the list $(v^1,\ldots, v^k)$.} $\underline u=(u^1,\ldots, u^k)\in (\Wc\setminus\{e\})^k$, we set
\begin{equation*}
	L[\underline u]=\sum_{i=1}^k \frac{|u^i|}{|\underline u|}L[u^i], \qquad |\underline u|:=\sum_{i=1}^k|u^i|.
\end{equation*}
Throughout this section, $K\subseteq C$ is an admissible compact set. For convenience, we relabel elements of $\Jc_K$ as $\beta,1,\ldots,r$, so that $\beta\leadsto1\leadsto\ldots \leadsto r$.
\subsubsection{The slicing map} 
\label{section: slicing map}
\begin{defi}[The slicing map]
	Let $u\in \Wc_n$. For each $1\leq j\leq r$, if $u$ has no letters in $K_j$, let $u^j=e$. Otherwise, let $u^j$ be the subword of $u$ whose first letter is the first letter of $u$ within $K_j$ and whose last letter is the last letter of $u$ within $K_j$.
	We define $\sli_n(u)=(u^1,\ldots ,u^r)\in \Wc^r$.
\end{defi}
Implicitly, the slicing map depends on the choice of the compact $K$.
Since we have a total order on $(C_j)_{1\leq j\leq r}$, every $u\in \W_n$ can be written
\begin{equation}
	u=\zeta^1(u)u^1\zeta ^2(u)u^2\ldots \zeta^{r}(u)u ^r\zeta^{r+1}(u),
\end{equation}
where $\zeta^j(u)$ are some words of length at most $n$.
See Figure~\ref{fig: slicing map} for a visual depiction of the slicing map applied to two words $u^1,u^2\in \W_n$.
In the next proposition, we formulate bounds in terms of a simple continuous function $h: (0, \infty)\to [0,\infty)$, defined by
\begin{equation}
	\label{eq: function h}
	h(x)=\Big|\frac 1x-1\Big|+|1-x|.
\end{equation}
The function $h$ will provide rough bounds that are more convenient than the precise ones obtained in the proof. Notice that $\lim_{x \to 1}h(x)=0$.
\begin{prop}
	\label{prop: geographic ineq slicing map}
	\begin{enumerate}
		\item
	Let $n\in \N$
	and let $u\in \W_n$ be such that $\underline w:=\sli_n(u)\neq (e,\ldots, e)$. Then,
	\begin{equation}
		\label{eq: geographic ineq slicing map 1}
		\dlp(L[u],L[\underline w])\leq\dtv(L[u],L[\underline w]) \leq h\Big(\frac{|\underline w|}{n}\Big).
	\end{equation}
	\item
	Let $k\in \N$ and let $\underline u:=(u^{1},u^ 2,\ldots, u^k)\in \W^k_n$ be such that $\underline w^i:=\sli_n(u^i)\neq (e,\ldots, e)$ for all $i$. Then,
	\begin{equation}
		\label{eq: geographic ineq slicing map 2}
		\dlp(L[\underline u],L[\underline v])\leq \frac 1k\sum_{i=1}^k\Big(h\Big(\frac{k|\underline w^i|}{|\underline v|}\Big)+h\Big(\frac{|\underline w^i|}{n}\Big)\Big),
	\end{equation}
	where $\underline v\in \Wc^{kr}$ is the concatenation of the lists $\underline w^1,\ldots,\underline w^k$. 
\end{enumerate}
\end{prop}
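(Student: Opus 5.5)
The argument is a direct comparison of empirical measures through their letter counts; the only facts needed are that the slices are disjoint subwords of $u$ and that $\dlp\leq\dtv$.

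\textbf{First point.} Write $\underline w=(w^1,\ldots,w^r)$ and $m:=|\underline w|=\sum_j|w^j|$. The key observation is that the slices $w^j$ are pairwise disjoint subwords of $u$, so $m\leq n$. Disjointness holds because $\Jc_K$ is totally ordered and $u\in\W_n$. By the argument of Proposition~\ref{prop: nonadmissible: comparison of classes}, once $u$ has visited $C_{j'}$ it cannot later visit $C_j$ for $j\leadsto j'$ with $j\neq j'$, since $\leadsto$ is a partial order. Hence the slices appear in order and do not overlap, which gives the decomposition $u=\zeta^1u^1\cdots\zeta^{r+1}$.

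Set $\sigma:=\sum_j\sum_{i}\delta_{w^j_i}$, a sub-counting measure of $n L[u]$ with total mass $m$. Then
\[
\dtv(L[u],L[\underline w])=\Big|\tfrac1n\big(nL[u]-\sigma\big)+\big(\tfrac1n-\tfrac1m\big)\sigma\Big|_\TV\leq \frac{n-m}{n}+m\Big|\frac1n-\frac1m\Big| .
\]
The right-hand side equals $2(1-m/n)$. Since $1/x-1\geq 0$ for $x\leq1$, this is at most $h(m/n)$, which proves~\eqref{eq: geographic ineq slicing map 1}. The inequality $\dlp\leq\dtv$ was recalled above.

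\textbf{Second point.} First apply the triangle inequality through the intermediate measure $\frac1k\sum_i L[\underline w^i]$.

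For the first leg, $L[\underline u]=\frac1k\sum_iL[u^i]$ because all the $u^i$ have length $n$. Convexity of the total variation norm together with the first point gives
\[
\dtv\Big(L[\underline u],\tfrac1k\sum_iL[\underline w^i]\Big)\leq \frac1k\sum_ih\big(|\underline w^i|/n\big).
\]

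For the second leg, $L[\underline v]=\sum_i\frac{|\underline w^i|}{|\underline v|}L[\underline w^i]$. The difference with $\frac1k\sum_i L[\underline w^i]$ is therefore $\sum_i\big(\frac{|\underline w^i|}{|\underline v|}-\frac1k\big)L[\underline w^i]$, whose total variation is at most
\[
\frac1k\sum_i\Big|\frac{k|\underline w^i|}{|\underline v|}-1\Big|\leq\frac1k\sum_ih\Big(\frac{k|\underline w^i|}{|\underline v|}\Big).
\]

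Summing the two legs and using $\dlp\leq\dtv$ yields~\eqref{eq: geographic ineq slicing map 2}.

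\textbf{Main obstacle.} The only delicate step is justifying non-overlap of the slices, i.e.\ $m\leq n$ with the slices as disjoint blocks. This must come from the order structure and from $u\in\W$; it is also what the displayed decomposition of $u$ asserts. The remaining steps are elementary total-variation estimates.
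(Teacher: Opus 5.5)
Your proposal is correct and follows the paper's proof essentially step for step. For the first point you use the same splitting of $L[u]-L[\underline w]$ into a sub-counting term and a renormalization term; for the second, the same separation into a reweighting term $\sum_i\bigl|\frac1k-\frac{|\underline w^i|}{|\underline v|}\bigr|$ and a slicing term controlled by the first point. You also explicitly justify why the slices do not overlap, which the paper only asserts through the decomposition $u=\zeta^1u^1\zeta^2u^2\cdots u^r\zeta^{r+1}$.

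One small fix is needed in the first point. To get $2(1-x)\leq h(x)$ with $x=m/n\in(0,1]$, you need $\frac1x-1\geq 1-x$, equivalently $x+\frac1x\geq 2$. The fact you cite, $\frac1x-1\geq 0$, is not enough.
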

\begin{proof}
	We begin by proving~\eqref{eq: geographic ineq slicing map 1}. We have $\dlp\leq \dtv$ on $\Pc(\R^d)^2$. Moreover,
	\begin{align*}
		\dtv(L[u],L[\underline w]) 
		&=
		\bigg|L[u]- \frac 1{|\underline w|}\sum_{j=1}^r|u^j|L[u^j]\bigg|_\TV
		\leq
		1-\frac{|\underline{w}|}{n}+\frac 1n\bigg |\sum_{i=1}^n\delta_{u_i}-\sum_{j=1}^r\sum_{i=1}^{|u^j|}\delta_{u^j_i}\bigg|_\TV.
		\end{align*}
On the right-hand side, each Dirac mass in the second sum also appears in the first. Hence,
\begin{align*} 
	\dtv(L[u],L[\underline w]) 	\leq 
		1-\frac{|\underline{w}|}{n}+\frac1n(n-|\underline w|)
		&\leq h\Big(\frac{|\underline w|}{n}\Big).
	\end{align*}
	The bound with $h(|\underline w|/n)$ is due to $|\underline w|\leq n$.
	We have proved~\eqref{eq: geographic ineq slicing map 1}.
	We now prove~\eqref{eq: geographic ineq slicing map 2}. Using~\eqref{eq: geographic ineq slicing map 1}, we have
	\begin{align*}
		\dlp(L[\underline u],L[\underline v])
		\leq\dtv(L[\underline u],L[\underline v])
		&=\bigg|\sum_{i=1}^k\frac 1k L[u^i]-\sum_{i=1}^k\frac{|\underline w^{i}|}{|\underline v|}L[\underline w^i]\bigg|_\TV
		\\&\leq \sum_{i=1}^k\Big(\Big|\frac 1k -\frac{|\underline w^i|}{|\underline v|}\Big|+\frac 1k|L[u^i]-L[\underline w^i]|_{\TV}\Big)
		\\&\leq \sum_{i=1}^k\Big|\frac 1k -\frac{|\underline w^i|}{|\underline v|}\Big|+ \frac 1k\sum_{i=1}^kh\Big(\frac{|\underline w^i|}{n}\Big)
		\\&\leq \frac 1k\sum_{i=1}^k\Big(h\Big(\frac{k|\underline w^i|}{|\underline v|}\Big)+h\Big(\frac{|\underline w^i|}{n}\Big)\Big).
	\end{align*}
	We have thus proved~\eqref{eq: geographic ineq slicing map 2}.
\end{proof}
\subsubsection{The stitching map}
\label{section: stitching map}
In this section, we assume~\ref{hyp: pseudo uniformity with tau}. The function $\tau$ and the constants $\tau_K$ and $c_K$ are defined in Assumption~\ref{hyp: pseudo uniformity with tau} and depend on $K$.
In the discrete case of \cite{daures2025}, the output of the stitching map was a word. Here, we need $\sti_{k,T}(\underline v)$ to be a set of words in order to later make the inequality $\P(\sti_{k,T}(\underline v))>0$ possible.
See Figure~\ref{fig: stitching map} for a visual depiction of the stitching map.
\begin{defi}[The stitching map]
	\label{defi: stitching map}
	 Let $k\in \N$. We say that a finite sequence of words $\underline v=(v^1,\ldots, v^k)\in \Wc^k$ is stitchable if there exists
	 a nondecreasing function $j:\{1,\ldots, k\}\to\{1,\ldots, r\}$ such that 
	 for each $1\leq i\leq k$ satisfying $v^i\neq e$, the first and last letter of $v^i$ belong to $K_{j(i)}$. Let $T\in \N$. We set 
	\begin{equation*}
		\Sc_{k,T}=\{\underline v\in \Wc^k,\, \hbox{$\underline v$ is stitchable},\ |\underline v|+ k\tau_K\leq T\}.
	\end{equation*}
	Let $\underline v\in \Sc_{k,T}$. We define the measurable set $\sti_{k,T}(\underline v)\subseteq \Wc_T$ in the following way:
	for each $1\leq i\leq k$,
	\begin{enumerate}
		\item if $v^i=e$, we set $\tau_{i}=0$;
		\item otherwise, let $y=v^{i}_1$ and let $x$ be the last letter of the last non-empty $v^{j}$ before $v^i$ in the sequence (or $\xinit$ if there are none). Then, $(x,y)\in \bigcup_{j=1}^rK_j^-\times K_{j}$ (because the sequence is stitchable) and we set $\tau_{i}=\tau(x,y)$.
	\end{enumerate}
	Finally, we set $\tau_{k+1}=T-(|\underline v|+\tau_1+\ldots +\tau_k)\geq 0$ and 
	\begin{equation*}
		\sti_{k,T}(\underline v)=\Wc_{\tau_1}\times \{v^1\}\times \Wc_{\tau_2}\times \{v^2\}\times \ldots\times \Wc_{\tau_k}\times \{v^k\}\times \Wc_{\tau_{k+1}}\subseteq \Wc_T. 
	\end{equation*}
\end{defi}
\begin{prop}
	\label{prop: geographic ineq stitching map}
	Let $\underline v\in \Sc_{k,T}$ be such that $\underline v\neq(e,\ldots, e)$. Then, for all $w\in \sti_{k,T}(\underline v)$, we have
	\begin{equation*}
		\dlp (L[w],L[\underline v])\leq 2h\Big(\frac {|\underline v|}{T}\Big).
	\end{equation*}
\end{prop}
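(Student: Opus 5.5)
The plan mirrors the proof of the first point of Proposition~\ref{prop: geographic ineq slicing map}: bound $\dlp$ by $\dtv$ and compare the two empirical measures directly in total variation. Fix $w\in \sti_{k,T}(\underline v)$. By Definition~\ref{defi: stitching map}, $w$ is the concatenation $\xi^1v^1\xi^2v^2\ldots\xi^kv^k\xi^{k+1}$ with $|\xi^i|=\tau_i$, so $|w|=T$. The letters of the $v^i$ therefore appear among the letters of $w$, counted with multiplicity, and
\begin{equation*}
T L[w]=\sum_{i=1}^k |v^i|L[v^i]+\sum_{i=1}^{k+1}\sum_{l=1}^{\tau_i}\delta_{\xi^i_l}=|\underline v|L[\underline v]+R,
\end{equation*}
where $R$ is a positive measure of total mass $T-|\underline v|\geq 0$. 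Since $\underline v\neq (e,\ldots,e)$, we have $|\underline v|>0$, so $L[\underline v]$ is well defined.

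Next we write
\begin{equation*}
L[w]-L[\underline v]=\Big(\frac{|\underline v|}{T}-1\Big)L[\underline v]+\frac 1T R
\end{equation*}
and apply the triangle inequality for $|\cdot|_\TV$. With the same normalization as in Proposition~\ref{prop: geographic ineq slicing map}, this gives
\begin{equation*}
\dtv(L[w],L[\underline v])\leq \Big(1-\frac{|\underline v|}{T}\Big)+\frac{T-|\underline v|}{T}=2\Big(1-\frac{|\underline v|}{T}\Big).
\end{equation*}
Finally, for $x=|\underline v|/T\in(0,1]$ we have $1-x=|1-x|\leq h(x)$. Hence $\dtv(L[w],L[\underline v])\leq 2h(|\underline v|/T)$, and $\dlp\leq\dtv$ concludes.

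There is essentially no obstacle here. The only points to check are that $|w|=T$ exactly, which holds because $\tau_{k+1}$ is defined to absorb the remaining length, and that $|\underline v|\leq T$, which holds because $\underline v\in\Sc_{k,T}$. The inequality is deliberately crude; the factor $2$ and $h$ are used only so that the bound matches the form of the slicing estimates.
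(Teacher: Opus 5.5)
Your proof is correct and follows essentially the same route as the paper: bound $\dlp$ by $\dtv$, then split the difference into a renormalization term plus the contribution of the at most $T-|\underline v|$ inserted letters. The only cosmetic difference is that you normalize by $T$ and get $2(1-|\underline v|/T)$, whereas the paper normalizes by $|\underline v|$ and gets $2(T/|\underline v|-1)$; both are bounded by $2h(|\underline v|/T)$.
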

\begin{proof}
	Using that $\dlp\leq \dtv$, we obtain
	\begin{equation*}
		\dlp(L[w],L[\underline v])\leq \Big(\frac T{|\underline v|}-\frac T{T}\Big)+\frac 1{|\underline v|}\bigg|\sum_{i=1}^{|w|}\delta_{w_i}-\sum_{i=1}^k\sum_{j=1}^{|v^i|}\delta_{v^i_ j}\bigg|_{\mathrm {TV}}.
	\end{equation*}
	Since words $v^i$ are non-overlapping subwords of $w$, the two sums in the TV norm on the right-hand side differ only by Dirac terms corresponding to letters of $w$ that are not in its subwords $v^i$. There are at most $T-|\underline v|$ such letters and hence the second term is at most $(T-|\underline v|)/|\underline v|$. 
	This completes the proof because $1+1/T\leq 2$.
\end{proof}
\subsubsection{The coupling map}
\label{section: coupling map}
In this section, we assume~\ref{hyp: pseudo uniformity with tau}. The coupling map is a composition of the slicing map and the stitching map. Given $N$ words of length $n$, it provides a set of words of length $T$ involving subwords of the initial words. 
See Figure~\ref{fig: coupling map} for a visual depiction of the coupling map.
\begin{figure}[!tb]
	\centering
	\begin{subfigure}[b]{1\textwidth}
		\includegraphics[width=\textwidth]{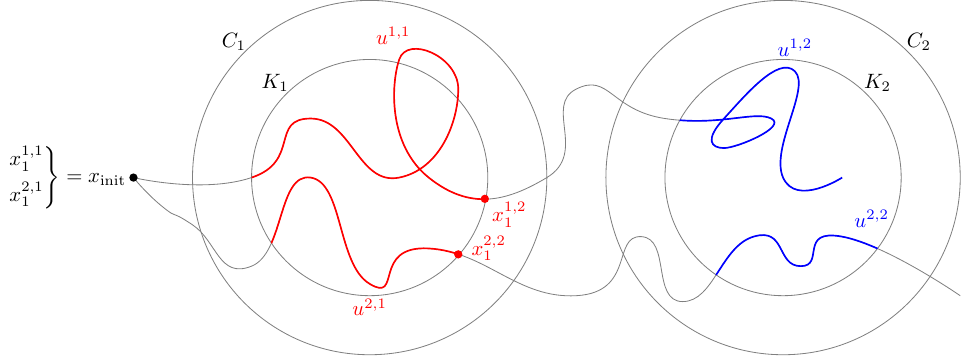}
		\captionsetup{width=0.7\textwidth}
		\captionsetup[subfigure]{skip=1em}
		\caption{The action of the slicing map on two words $u^1$ and $u^2$. The slicing map removes the gray bits of the trajectories and yields a list of four subwords $\underline v=(u^{1,1}, u^{1,2},u ^{2,1}, u^{2,2})$.
		The letters $x_1^{i,j}$ used in the proof of Proposition~\ref{prop: proba ineq coupling map} are indicated.\vspace{0.5cm}}
		\label{fig: slicing map}
	\end{subfigure}
	\begin{subfigure}[b]{1\textwidth}
		\includegraphics[width=\textwidth]{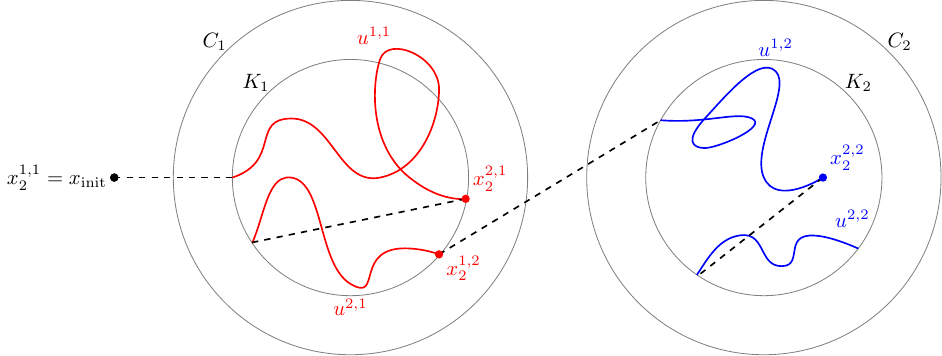}
		\captionsetup{width=0.7\textwidth}
		\caption{The action of the stitching map on the stitchable list $\sigma(\underline v)=(u^{1,1},u^{2,1},u^{1,2},u^{2,2})$. The dashed lines represent the insertion of $\Wc_{\tau_k}$ between words of $\sigma(\underline v)$. The letters $x_2^{i,j}$ used in the proof of Proposition~\ref{prop: proba ineq coupling map} are indicated.}
		\label{fig: stitching map}
	\end{subfigure}
	\captionsetup{width=0.8\textwidth}
	\caption{Example of use of the coupling map, with $N=2$ and $r=2$. Trajectories are depicted as smooth curves for easier readability, even though they are discrete in time. Given $\underline u=(u^1,u^2)$, the slicing map produces four subwords of $u^1$ and $u^2$ in~\eqref{fig: slicing map}. These subwords are reordered and then reassembled by the stitching map in~\eqref{fig: stitching map}. }
	\label{fig: coupling map}
\end{figure}
\begin{lem}
	\label{lem: reordering map}
	Let $N,n\in \N$. There exists $\sigma:\sli_n(\W_n)^N\to \Wc^{Nr}$ such that the elements of $\sigma(\underline v)$ are exactly the elements of $\underline v$ and $\sigma(\underline v)$ belongs to $\Sc_{Nr,T}$ for every integer $T\geq Nn+Nr\tau_K$.
\end{lem}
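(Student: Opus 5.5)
The plan is to define $\sigma$ explicitly as a regrouping by class index: all first slices, then all second slices, and so on. Let $\underline v=(\underline w^1,\ldots,\underline w^N)\in\sli_n(\W_n)^N$ with $\underline w^i=(u^{i,1},\ldots,u^{i,r})$. I would set
\begin{equation*}
\sigma(\underline v)=(u^{1,1},u^{2,1},\ldots,u^{N,1},\,u^{1,2},\ldots,u^{N,2},\,\ldots,\,u^{1,r},\ldots,u^{N,r})\in\Wc^{Nr}.
\end{equation*}
This $\sigma$ is a fixed permutation of coordinates, so it is measurable. By construction its entries are exactly the entries of $\underline v$, counted with multiplicity.

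The first step is stitchability. Define $j:\{1,\ldots,Nr\}\to\{1,\ldots,r\}$ by $j((l-1)N+i)=l$ for $1\leq i\leq N$ and $1\leq l\leq r$. This function is nondecreasing. By the definition of the slicing map, a non-empty $u^{i,l}$ starts at the first letter of $u^i$ in $K_l$ and ends at the last letter of $u^i$ in $K_l$. Hence its first and last letters lie in $K_l=K_{j(\cdot)}$, which is exactly the stitchability condition of Definition~\ref{defi: stitching map}.

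The second step is the length bound. Each $u^{i,l}$ is a subword of $u^i$, and the slices of a single word are disjoint by the decomposition $u=\zeta^1u^1\cdots\zeta^ru^r\zeta^{r+1}$. Therefore $\sum_l|u^{i,l}|\leq n$, which gives $|\sigma(\underline v)|\leq Nn$. Consequently $|\sigma(\underline v)|+Nr\tau_K\leq T$ whenever $T\geq Nn+Nr\tau_K$, so $\sigma(\underline v)\in\Sc_{Nr,T}$.

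The step needing care is that the stitching map, applied afterwards, requires each consecutive pair $(x,y)$ to lie in some $K_j^-\times K_j$. This should already follow from stitchability: $x$ is in $K_{j'}$ with $j'\leq j$, or $x=\xinit$, and the total order $\beta\leadsto1\leadsto\cdots\leadsto r$ gives $x\in K_j^-$. So nothing beyond the grouping by class index is needed. The one genuinely essential point is the disjointness of slices, which uses $u\in\W_n$ and the total order. That disjointness holds because, for $u\in\W_n$, visits to $K_l$ cannot occur after visits to $K_{l'}$ with $l'>l$, since classes cannot be re-entered backwards.
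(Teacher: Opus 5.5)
Your proposal is correct and matches the paper's proof: the paper also defines $\sigma$ by reading the $N\times r$ matrix $(u^{i,j})$ column by column, gets stitchability from the fact that each non-empty $u^{i,j}$ starts and ends in $K_j$, and bounds $|\sigma(\underline v)|\leq\sum_i|u^i|=Nn$. Your additional remarks only make explicit what the paper leaves implicit in the decomposition $u=\zeta^1u^1\cdots\zeta^ru^r\zeta^{r+1}$ and in Definition~\ref{defi: stitching map}: the disjointness of the slices of $u\in\W_n$ comes from antisymmetry of $\leadsto$, and the previous letter $x$ lies in $K_j^-$.
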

\begin{proof}
	Let $\underline u=(u^1,\ldots, u^N)\in \W^N_n$. Applying $\sli$ to each $u^i$ yields an $N\times r$ matrix of words $u ^{i,j}$. If $u^{i,j}\neq e$, then the first and last letters of $u^{i,j}$ belong to $K_j$. We set $\sigma(\sli_n(u^1),\ldots, \sli_n(u^N))$ to be the list obtained by reading the entries of the matrix $(u^{ij})$ column by column. This list contains $Nr$ elements and is stitchable. Moreover,
	\begin{equation*}
		|\sigma(\sli_n(u^1),\ldots, \sli_n(u^N))|=\sum_{1\leq i\leq N}\sum_{1\leq j\leq r}|u ^{i,j}|\leq \sum_{1\leq i\leq N}|u^i|= Nn.
	\end{equation*}
	Hence $\sigma(\sli_n(u^1),\ldots, \sli_n(u^N))\in \Sc_{Nr, T}$ if $T\geq Nn+Nr\tau_K$.
\end{proof} 
\begin{defi}[The coupling map]
	Let $N,n, T\in \N$ be such that $T\geq Nn+Nr\tau_K$.
	Let $\underline u=(u^1,\ldots ,u ^N)\in \W_n^N$. Let $\underline v=(\sli_n(u^1),\ldots, \sli_n(u^N))$. 
	Then, $\sigma(\underline v)\in \Sc_{Nr,T}$, and we set
	\begin{equation*}
		\cou_{N,n,T}(\underline u)= \sti_{Nr,T}(\sigma(\underline v))=\sti_{Nr,T}(\sigma(\sli_n(u^1),\ldots, \sli_n(u^N)))\in \Bc(\Wc_{T}).
	\end{equation*}
\end{defi}
\begin{prop}
	\label{prop: geographic ineq coupling map}
	Let $\underline u=(u^1,\ldots ,u ^N)\in \W_n^N$ be such that $\underline u^i:=\sli_n(u^i)\neq(e,\ldots,e)$ for all $i$, and $\underline v= (\underline u^1,\ldots, \underline u^N)$. Then, for all $w\in \cou_{N,n, T}(\underline u)$, we have
	\begin{equation}
		\label{eq: geographic ineq coupling map}
		\dlp(L[\underline u],L[w])\leq \frac 1N\sum_{i=1}^N\Big(h\Big(\frac{N|\underline u^i|}{|\underline v|}\Big)+h\Big(\frac{|\underline u^i|}{n}\Big)\Big)+ 2h\Big(\frac{|\underline v|}{T}\Big).
	\end{equation}
\end{prop}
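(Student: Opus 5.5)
The bound follows from the triangle inequality for $\dlp$, passing through the intermediate measure $L[\sigma(\underline v)]$. First I note that $L[\sigma(\underline v)]=L[\underline v]$. By Lemma~\ref{lem: reordering map}, $\sigma$ only reorders the words of $\underline v$. The quantity $L[\cdot]$ of a list of words is a weighted average with weights $|u^{i,j}|/|\underline v|$, so it is invariant under permutation of the list, and any empty words contribute nothing. Here $\underline v$ is understood as the flattened list of $Nr$ words, consistent with the notation of Proposition~\ref{prop: geographic ineq slicing map}.

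I would then write
\begin{equation*}
\dlp(L[\underline u],L[w])\leq \dlp(L[\underline u],L[\underline v])+\dlp(L[\sigma(\underline v)],L[w]).
\end{equation*}
The first term is bounded by the second point of Proposition~\ref{prop: geographic ineq slicing map}, applied with $k=N$, the words $u^1,\ldots,u^N\in\W_n$, and $\underline w^i=\underline u^i$. The hypothesis $\underline u^i\neq(e,\ldots,e)$ is exactly the one required there. This gives $\frac1N\sum_{i}\big(h(N|\underline u^i|/|\underline v|)+h(|\underline u^i|/n)\big)$.

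For the second term, the definition of the coupling map gives $w\in\cou_{N,n,T}(\underline u)=\sti_{Nr,T}(\sigma(\underline v))$, with $\sigma(\underline v)\in\Sc_{Nr,T}$ by Lemma~\ref{lem: reordering map}. Since some $\underline u^i$ is non-trivial, $\sigma(\underline v)\neq(e,\ldots,e)$. Proposition~\ref{prop: geographic ineq stitching map} therefore applies and gives $\dlp(L[w],L[\sigma(\underline v)])\leq 2h(|\sigma(\underline v)|/T)=2h(|\underline v|/T)$, since $|\sigma(\underline v)|=|\underline v|$.

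Summing the two bounds gives \eqref{eq: geographic ineq coupling map}. There is no real obstacle. The only care needed is the bookkeeping: checking that the reordering preserves both the total length and the empirical measure, and that the non-emptiness hypotheses of the two cited propositions hold.
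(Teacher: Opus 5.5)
Your proof is correct and follows the paper's own argument. It applies the triangle inequality through $L[\underline v]=L[\sigma(\underline v)]$, bounds the first term with the second point of Proposition~\ref{prop: geographic ineq slicing map} and the second with Proposition~\ref{prop: geographic ineq stitching map}, and uses $|\sigma(\underline v)|=|\underline v|$. Your extra bookkeeping (permutation invariance, empty words contributing nothing, non-triviality of $\sigma(\underline v)$) only makes explicit what the paper leaves implicit.
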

\begin{proof}
	Since $L[\underline v]=L[\sigma(\underline v)]$, we have
	\begin{align*}
		\dlp(L[\underline u],L[w])\leq \dlp(L[\underline u],L[\underline v])+\dlp(L[\sigma(\underline v)],L[w]).
	\end{align*}
	It then suffices to combine the bounds of Propositions~\ref{prop: geographic ineq slicing map} and~\ref{prop: geographic ineq stitching map}. The bound~\eqref{eq: geographic ineq coupling map} follows since $|\sigma(\underline v)|=|\underline v|$.
\end{proof}
The following proposition is the core of our subadditive method. It states that the long words $w$ resulting from the coupling of small words $u^i$ are approximately as probable under $\P$ as the small words $u^i$ independently picked according to $\P$.
One can prove it without using densities, provided a bound such as in Assumption~\ref{hyp: pseudo uniformity with tau} is satisfied.
\begin{prop}
\label{prop: proba ineq coupling map}
Let $W$ be a measurable subset of $\Wc_T$, and let
\begin{equation*}
	U=\{\underline u\in \W_n^N\ |\ \cou_{N,n,T}(\underline u)\subseteq W\}.
\end{equation*}
Then,\footnote{Note that the factor in~\eqref{eq: proba ineq coupling map} is simpler than the corresponding one in \cite{daures2025}. This is because our stitching map allows empty words as input.}
\begin{equation}
	\label{eq: proba ineq coupling map}
	 \P^{\otimes N}(U)\leq\big(c_K^r(\tau_K+1)^r(n+1)^{2r+1}\big)^{N}\P (W).
\end{equation}
\end{prop}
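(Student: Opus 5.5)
We decompose each $u^i\in\W_n$ according to the slicing map, $u^i=\zeta^{i,1}u^{i,1}\zeta^{i,2}u^{i,2}\cdots\zeta^{i,r}u^{i,r}\zeta^{i,r+1}$, and partition $\W_n^N$ by the combinatorial data of the slicing. For each $i$ and $j$ this data is the starting position and the length of $u^{i,j}$ (with a special value when $u^{i,j}=e$). There are at most $(n+1)^{2r}$ choices per word; we allow an extra factor $(n+1)$ of slack per word to absorb degenerate cases, which accounts for the $(n+1)^{2r+1}$ in~\eqref{eq: proba ineq coupling map}. On each cell of the partition the coupling map $\cou_{N,n,T}$ depends only on the subwords $u^{i,j}$, through $\sigma$ and the gap lengths $\tau_i=\tau(x,y)$. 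We further partition each cell according to the values of these gaps, which lie in $\{0,1,\ldots,\tau_K\}$. This gives at most $(\tau_K+1)^{r}$ choices per word, because each $u^{i,j}$ is preceded by exactly one gap. It then suffices to show that on each refined cell $D$ we have $\P^{\otimes N}(U\cap D)\leq c_K^{rN}\P(W)$, and to sum over cells.

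On a fixed cell, we write the density of $\P^{\otimes N}$ as a product over $i$ of $\rho(\xinit,u^i)$. We integrate out the free segments $\zeta^{i,j}$ using the Chapman--Kolmogorov relation~\eqref{eqloc: recursive relation for rhok}. This bounds the contribution of $u^i$ by
\begin{equation*}
	\prod_{j}\rho^{k_{i,j}}\big(x_1^{i,j},u^{i,j}\big),
\end{equation*}
where $x_1^{i,j}$ is the last letter of the previous nonempty slice of $u^i$ (or $\xinit$) and $k_{i,j}=|\zeta^{i,j}|+1$; the final segment $\zeta^{i,r+1}$ integrates to at most $1$. Here $\rho^k(x,u)$ for a word $u$ means $\rho^k(x,u_1)\rho(u_1,u_2)\cdots$. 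Since $x_1^{i,j}\in K_{j'}$ with $j'\leadsto j$, we have $x_1^{i,j}\in K_j^-$. Assumption~\ref{hyp: pseudo uniformity with tau}, applied to words as remarked after its statement, then gives
\begin{equation*}
	\rho^{k_{i,j}}\big(x_1^{i,j},u^{i,j}\big)\leq c_K\,\rho^{\tau(x_2^{i,j},u^{i,j}_1)}\big(x_2^{i,j},u^{i,j}\big),
\end{equation*}
where $x_2^{i,j}$ is the last letter of the nonempty word preceding $u^{i,j}$ in the reordered list $\sigma(\underline v)$ (or $\xinit$); by stitchability this letter also lies in $K_j^-$. Each nonempty slice costs one factor $c_K$, so there are at most $rN$ such factors.

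It remains to recognize the resulting product of $\rho^{\tau}$ terms as the integral of the density $\rho(\xinit,\cdot)$ over $\sti_{Nr,T}(\sigma(\underline v))$. Expanding $\rho^{\tau_i}$ as an integral over $\Wc_{\tau_i-1}$ fills each gap of length $\tau_i$ with free letters; the final block $\Wc_{\tau_{k+1}}$ integrates to $1$. Applying the change of variables $(u^{i,j})\mapsto$ the reordered list (a permutation of coordinates, hence Lebesgue-measure preserving) and integrating over the free gap letters, the integral over $U\cap D$ becomes at most $c_K^{rN}\,\P\big(\bigcup_{\underline u\in U\cap D}\cou(\underline u)\big)\leq c_K^{rN}\P(W)$, since $\cou(\underline u)\subseteq W$ for $\underline u\in U$.

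The main obstacle is the injectivity in this last step. Within a fixed cell, the stitched word must determine the subwords $u^{i,j}$. It does, because the positions of the subwords inside $\sti$ are fixed once the lengths and the gaps $\tau_i$ are fixed. The subtle point is that the gaps depend on the subwords themselves through $\tau(x_2,y)$, so fixing the $\tau$-values in the partition is essential, and measurability of $\tau$ is needed to make each cell measurable. One also has to check the edge conventions: $p(x,\cdot)=\delta_e$ on $\Wc_0$ handles empty slices, and $\tau=0$ for empty words.
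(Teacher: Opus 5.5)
Your proposal is correct and is essentially the paper's proof. It uses the same partition according to the lengths of the $\zeta^{i,j}$ and $u^{i,j}$ and the values $\tau(x_2^{i,j},u^{i,j}_1)$, integrates out the free segments $\zeta^{i,j}$, applies Assumption~\ref{hyp: pseudo uniformity with tau} pointwise to replace $x_1^{i,j}$ by $x_2^{i,j}$, and re-expands the $\rho^{\tau}$ factors into gap letters so that the fixed-length concatenation turns the integral into $\P(W)$. The only blemish is an off-by-one that the paper's write-up also has: expanding $\rho^{\tau_i}$ produces $\tau_i-1$ free letters, whereas $\sti_{k,T}$ inserts a block $\Wc_{\tau_i}$, so the stitching map should insert $\Wc_{\tau_i-1}$ before each non-empty $v^i$ (this changes nothing else in the argument).
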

\begin{proof}
	Let us begin by introducing some useful notation. Throughout the proof we will manipulate a generic $\underline u\in U$, on which several variables will implicitly depend.
	For $1\leq i\leq N$, we set $(u^{i,1},\ldots ,u^{i, r}):=\sli_n(u^i)$ and we define the words $\zeta^{i,j}$ by the expressions
	\begin{equation}
		\label{eq: decomposition ui}
		u^i=\zeta ^{i,1}u^{i,1}\zeta ^{i,2}u^{i,2}\ldots \zeta ^{i,r}u^{i,r}\zeta ^{i,r+1},
		\qquad 1\leq i\leq N,
	\end{equation}
	where by convention $\zeta ^{i,j}=e$ when $u^{i,j}=e$ (this choice avoids some ambiguity when some $u^{i,j}$ is empty). The words $\zeta^{i,j}$ implicitly depend on $\underline u$. 
	We also set
	\begin{equation*}
		\underline v=(\sli_n(u^1),\ldots, \sli_n (u ^N))=(u^{1,1},u^{1,2},\ldots,u^{N,r-1},u^{N,r})\in \Wc_{\leq n}^{Nr},
		\footnote{We now view the object $\underline v=(\sli_n(u^1),\ldots, \sli_n(u^N))$ as the concatenation of the lists $\sli_n(u^1),\ldots,\sli_n(u^N)$, which is a list of $Nr$ words, rather than a $N\times r$ matrix of words.}
	\end{equation*}
	which also implicitly depends on $\underline u$.
	The cornerstone of the present proof will be the use of Assumption~\ref{hyp: pseudo uniformity with tau}, which involves two letters $x_1$ and $x_2$. We now introduce letters $x_1^{i,j}$ and $x_2^{i,j}$ for later use. 
	We denote by $x_1^{i,j}$ the last letter of the last non-empty word preceding $u^{i,j}$ 
	in the list $\sli_n(u^i)$, or $\xinit$ if there are none. In particular, notice that $x^{i,1}_1=\xinit$ for all $i$.
	Similarly, we denote by $x_2^{i,j}$ the last letter of the last non-empty word preceding $u^{i,j}$ in the list $\sigma(\underline v)$, or $\xinit$ if there are none. 
	See Figure~\ref{fig: coupling map} for a visual depiction of the definitions of $x_1^{i,j}$ and $x_2^{i,j}$.
	Notice that $x_1^{i,j}\in K_{j_1}$ and $x_2^{i,j}\in K_{j_2}$ for some $j_1,j_2\leadsto j$.\footnote{Including the case when $j_1=\beta$ or $j_2=\beta$.}
	To manipulate words of fixed lengths, we define the sets
		\begin{align*}
		\mathcal E_{k }
		=\big\{\underline u \in \W_n^N\ |\ \forall 1\leq i\leq N, \, 1\leq j&\leq r+1,
		\ |\zeta ^{i,j}|=k_{i,j}\big\},
		\\
		\mathcal F_{l}
		=\big\{\underline u \in \W_n^N\ |\ \forall 1\leq i\leq N, \, 1\leq j&\leq r,
		\  |u^{i,j}|=l_{i,j}\big\},
		\\
		\mathcal G_{\tau}
		=\big\{\underline u \in \W_n^N\ |\ \forall 1\leq i\leq N, \, 1\leq j&\leq r,
		\  \tau(x_{2}^{i,j},u^{i,j}_1)=\tau_{i,j}\ \hbox{if $\tau_{i,j}>0$} \},
	\end{align*}
	where $k=(k_{i,j})$, $l=(l_{i,j})$ and $\tau=(\tau_{i,j})$ are $N\times (r+1)$ and $N\times r$ matrices of integers such that $k_{i,1}+l_{i,1}+\ldots+ k_{i,r}+l_{i,r}+k_{i, r+1}= n$ for all $i$ and $\tau_{i,j}\leq \tau_K$ for all $(i,j)$. 
	We set
	\begin{align*}
		U_{k ,l,\tau}=U\cap \mathcal E_{k}\cap \mathcal F_l\cap \mathcal G_\tau,
		\qquad 
		V_\tau=
		V\cap\sli_n^{\otimes N}(\mathcal G_\tau),
	\end{align*}
	where 
	\begin{equation*}
	V=\{\underline v'\in \Sc _{Nr, T}\ |\ \sti_{Nr, T} (\underline v')\subseteq W\}.
	\end{equation*}	 
	By definition, $\underline u\in U$ if and only if $\sigma(\underline v)\in V$ if and only if $\cou_{N,n, T}(\underline u)\subseteq W$.
	The goal is to bound  the quantity
		\begin{align}
		\label{eqloc: PN(Ukltau)}
		\P^{\otimes N}(U_{k,l,\tau})
		&=\int_{\mathcal E_k\cap \mathcal F_l} 
		\mathbf 1_{U\cap \mathcal G_\tau}(\underline u)
		\prod _{i=1}^N	p(\xinit,\d  u^i).
	\end{align}
	Here, the indicator function only involves the subwords $u^{i,j}$ and does not constrain the subwords $\zeta^{i,j}$. This means that $\mathbf 1_{U\cap \mathcal G_\tau}(\underline u)$ can be replaced by $\mathbf 1_{V_\tau}(\sigma (\underline v))$ in the integral.
	Let the symbol $z ^{i,j}$ denote the letter preceding $u^{i,j}$ in~\eqref{eq: decomposition ui}, or $\xinit$ if there are none~---~as for earlier notations, the letter $z^{i,j}$ implicitly depends on $\underline u$. These notations allow the change of variable 
	\begin{equation*}
		p(\xinit, \d u^i)=p(x_1^{i,1},\d \zeta^{i,1})p(z^{i,1},\d u^{i,1})p(x_1^{i,2},\d\zeta^{i, 2})\ldots 
		p(z^{i,r},\d u^{i,r})p(x_1^{i,r},\d\zeta^{i, r+1})
	\end{equation*}
	in integrals over $\Wc_n$ with fixed lengths of subwords.\footnote{To address the case when some $k_ {i,j}$ or some $l_{i,j}$ are null, we recall the convention that $p(x,\cdot)=\delta_e$ over $\Wc_0=\{e\}$, for all $x\in \Rxinit$.}
	 Performing these changes of variables in~\eqref{eqloc: PN(Ukltau)} yields
	\begin{align*}
		\P^{\otimes N}(U_{k,l,\tau})&=	\int\ldots \int
		\mathbf 1_{V_{\tau}}(\sigma(\underline v))
		\prod _{i=1}^N\bigg(\prod_{j=1}^r p(x_1^{i,j},\d\zeta^{i,j})p(z^{i,j},\d u^{i,j})\bigg)p(x_1^{i,r+1},\d\zeta^{i, r+1}).
	\end{align*}
	In this expression, there are $N(2r+1)$ integrals that are taken over $\Wc_{k_{1,1}}$, $\Wc_{l_{1,1}}$,$\ldots$, $\Wc_{l_{N,r}}$, $\Wc_{k_{N,r+1}}$. Since the words $\zeta^{i,j}$ are not constrained by the indicator function, the integrals over $\Wc_{k_{i,j}}$ simplify; by definition of $p^k$ we get 
	\begin{align*}
		\P^{\otimes N}(U_{k,l,\tau})=
		\int\ldots \int
		\mathbf 1_{V_{\tau}}(\sigma(\underline v))
		\prod_{i=1}^N\prod _{j=1}^rp^{k_{i,j}}(x_1^{i,j},\d u ^{i,j}),
		\end{align*}
		where the $Nr$ remaining integrals are taken over $\Wc_{l_{1,1}},\Wc_{l_{1,2}},\ldots ,\Wc_{l_{N,r}}$. Note that, during this simplification, the integrals over $\Wc_{k_{i,r+1}}$ became $p(x_1^{i,r+1},\Wc_{k_{i, r+1}})=1$.
		We are now ready to use Assumption~\ref{hyp: pseudo uniformity with tau} in each of the remaining integrals. 
		We recall that $x_2^{i,j}$ is a letter depending on $\underline u$ that was defined in the beginning of the proof. 
		By definition, for all $1\leq j\leq r$, the first letter of $u^{i,j}$ belongs to $K_j$ if $u^{i,j}\neq e$ and $x_1^{i,j}$ and $x^{i,j}_2$ both belong to $K_j^-$. 
		By Assumption~\ref{hyp: pseudo uniformity with tau}, we have 
	\begin{align}
		\label{eqloc: PN(Uklt) before developping ptauij}
		\P^{\otimes N}(U_{k,l,\tau})
		&\leq c_K^{Nr}\int\ldots \int
		\mathbf 1_{V_\tau} (\sigma (\underline v))
		\prod_{i=1}^N\prod _{j=1}^rp^{\tau_{i,j}}(x_2^{i,j},\d u ^{i,j}),
	\end{align}
	where the integrals are still taken over $\Wc_{l_{1,1}},\Wc_{l_{1,2}},\ldots ,\Wc_{l_{N,r}}$.
	Before expanding back each $p^{\tau_{i,j}}$ into an integral over $\Wc_{\tau_{i,j}}$, we need one last notation. Given words $\xi^{i,j}$ such that $|\xi^{i,j}|=\tau_{i,j}$ and another word $\xi^{1,r+1}$ such that 
	\begin{equation*}
		|\xi^{1,r+1}|=\tau_{1,r+1}:=T-(\tau_{1,1}+l_{1,1}+\ldots +\tau_{N,r}+l_{N,r}),
	\end{equation*} we define $y^{i,j}$ as the letter preceding $u^{i,j}$ in the word
	\begin{equation}
		\label{eqloc: change of variable long word}
		w:=(\xi^{1,1}u^{1,1}\xi^{2,1}u^{2,1}\ldots \xi^{N,1}u^{N,1})\ldots (\xi^{1,r}u^{1,r}\xi^{2,r}u^{2,r}\ldots \xi^{N,r}u^{N,r})\xi^{1,r+1},
	\end{equation}
	or $\xinit$ if there are none~---~of course, the letter $y^{i,j}$ depends implicitly on the choice of words $\xi^{i,j}$.
	With this notation, we have
	\begin{equation*}
		 p^{\tau_{i,j}}(x_2^{i,j},\d u ^{i,j})=\int_{W_{\tau_{i,j}}}p(x_2^{i,j},\d \xi^{i,j})p(y^{i,j},\d u^{i,j}).
	\end{equation*}
	Injecting these expansions in~\eqref{eqloc: PN(Uklt) before developping ptauij}, we get
	\begin{align}
	\label{eqloc: PN(Uklt) after developing tauij}
	\P^{\otimes N}(U_{k,l,\tau})
	&\leq c_K^{Nr}\int\ldots \int
	\mathbf 1_{V_\tau} (\sigma (\underline v))
	\prod_{i=1}^N\bigg(\prod _{j=1}^rp(x_2^{i,j},\d \xi ^{i,j})p(y^{i,j},\d u^{i,j})\bigg),
	\end{align}
	where there are now $2Nr$ integrals over $\Wc_{\tau_{1,1}}$, $\Wc_{l_{1,1}}$, $\Wc_{\tau_{2,1}}$, $\Wc_{l_{2,1}}$, $\ldots$, $\Wc_{\tau_{N,r}}$, $\Wc_{l_{N,r}}$. Using the fact that all $x\in\Rxinit $ satisfy
	\begin{equation*}
		1=p(x, \Wc_{\tau_{1,r+1}})=\int_{\Wc_{\tau_{1,r+1}}}p(x,\d \xi ^{1,r+1}),
	\end{equation*}
	we can even insert one last integral over $\xi^{1,r+1}\in \Wc_{\tau_{1,r+1}}$ in this bound. Moreover, by definition of $V_\tau$, the indicator function in the integrals satisfies $\mathbf 1_{V_{\tau}} (\sigma (\underline v))\leq \mathbf 1_W(w)$, where $w$ is the word defined in~\eqref{eqloc: change of variable long word}. By performing the change of variable induced by~\eqref{eqloc: change of variable long word}, we get 
	\begin{equation}
		\label{eqloc: crude bound on P(Uklt)}
		\P^{\otimes N}(U_{k,l,\tau})
		\leq c_K^{Nr}\int_{\Wc_T}\mathbf 1_{W}(w)p(\xinit,\d w)
		= c_K^{Nr}\P(W).
	\end{equation}
	This bound holds for every choice of $k, l, \tau$. Since the sets $U_{k,l,\tau}$ provide a partition of $U$, we just need to count the number of different choices of $k,l,\tau$ to obtain a bound on $\P^{\otimes N}(U)$. There are at most $ (n+1)^{N(2r+1)}$
	 joint choices of $k$ and $l$, and at most $(\tau_K+1)^{Nr}$ choices for $\tau$. By multiplying these factors, we recover the constant of~\eqref{eq: proba ineq coupling map}. The proof is complete.

\end{proof}
\subsection{The Ruelle-Lanford function for admissible measures}
\label{section: coupling and decoupling}
In this section, we prove Proposition~\ref{prop: IRL for admissible measures}. Assume~\ref{hyp: pseudo lsc density},~\ref{hyp: pseudo uniformity} and~\ref{hyp: null border}.
Let $\mu_1,\mu_2$ be two admissible measures such that the probability measure $\mu:=\frac12\mu_1+\frac12\mu_2$ is admissible too.
We will take $\mu_1=\mu_2=\mu$ to prove the existence of the Ruelle-Lanford function. We will then choose $\mu_1$ and  $\mu_2$ arbitrarily to prove the convexity property of Proposition~\ref{prop: IRL for admissible measures}.
Since $\mu$ is absolutely continuous with respect to $\leb$,  
Assumption~\ref{hyp: null border} yields that $\mu(\partial C)=0$ so that $\mu(C)=1$. 
Let $\epsilon >0$.
We choose a finite subfamily of pairwise comparable classes of $(C_j)_{j\in \mathcal J}$, which we now relabel as $C_1, C_2,\ldots ,C_r$, such that $\mu(C_j)>0$ for all $j\leq r$ and
\begin{align*}
	\mu(C_1)+\ldots +\mu(C_r)\geq 
	1-\frac\epsilon4
	,\qquad	 \beta\leadsto1\leadsto 2\leadsto \ldots \leadsto r.
\end{align*}
The existence of such a subfamily of $(C_j)_{j\in \mathcal J}$ is granted by the admissibility of $\mu$.
Since $\mu=\frac12\mu_1+\frac12\mu_2$, we have 
\begin{equation*}
	\mu_k(C_1)+\ldots +\mu_k(C_r)\geq 1-\frac\epsilon2,\qquad k\in \{1,2\}.
\end{equation*}
The measures $\mu_1$ and $\mu_2$ are inner regular because they are absolutely continuous with respect to the Lebesgue measure.
Hence, there exists a compact set $K^0\subseteq C_1\cup \ldots \cup C_r$ such that $\mu_1(K ^0)\geq 1-\epsilon$ and $\mu_2(K ^0)\geq 1-\epsilon$.
As a consequence, we also have $\mu(K^0)\geq 1-\epsilon$.
Let $\delta>0$. Without loss of generality, we assume that
\begin{equation*}
	0<\delta<\min\Big(d(K^0, \R^d\setminus (C_1\cup \ldots \cup C_r)), \frac{1-\epsilon}2\Big),
\end{equation*} 
so that
\begin{equation*}
	K:= \{x\in \R^d\ |\ d(x,K^0)\leq \delta\}\subseteq C_1\cup \ldots \cup C_r.
\end{equation*} 
The set $K$ is an admissible compact set.
As before, we set $K_\beta=C_\beta=\{\xinit\}$ and $K_j=C_j\cap K$ for all $1\leq j\leq r$. The slicing, stitching, and coupling maps are henceforth defined with respect to these choices of sets $C_j$, $K_j$ and $K$.

In this context, Proposition~\ref{prop: geographic ineq coupling map} results in an $\epsilon$-$\delta$ bound, for proper choices of $n$ and $T$: this is Corollary~\ref{coro: advanced geographic ineq coupling map} below.
Corollary~\ref{coro: advanced geographic ineq coupling map} states that the coupling map turns any list of words $\underline u$ whose empirical measures approximate $\mu_1$ and $\mu_2$ into a set of words $w$ whose empirical measure approximates $\mu$.
We recall that $h$ is simply the continuous function satisfying $h(1)=0$ introduced in~\eqref{eq: function h}. Notice that the right-hand side of~\eqref{eq: fine geographic ineq coupling map} vanishes as $(\epsilon, \delta)\to (0,0)$.
\begin{coro}
	\label{coro: advanced geographic ineq coupling map}
	Let $n\in \N$ be such that $\tau_{K}/n\leq \delta$ and $n(1-\epsilon-\delta)\geq 1$, let $T\in \N$ be such that $n+r\tau_{K}\leq T\delta$, and let\footnote{This choice of $N$ satisfies $T\geq Nn+Nr\tau_K$.}
	\begin{equation*}
		N=\left\lfloor\frac T{n+r\tau_K}\right\rfloor.
	\end{equation*}
	Let
	$\underline u=(u^1,\ldots ,u^N)\in \W_n^N$ be such that $\dlp (L[u^i], \mu_1)\leq \delta$ when $i$ is odd and $\dlp (L[u^i], \mu_2)\leq \delta$ when $i$ is even. Then,
	for all $w\in \cou_{N,n,T}(\underline u)$,
	\begin{equation}
		\label{eq: fine geographic ineq coupling map}
		\dlp(L[w], \mu)\leq 4h\Big((1-\epsilon-\delta)\frac{1-\delta}{1+r\delta}\Big)+3\delta=:f(\epsilon, \delta).
	\end{equation}
\end{coro}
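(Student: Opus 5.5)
We plan to combine Proposition~\ref{prop: geographic ineq coupling map} with a triangle inequality through the intermediate measure $L[\underline u]$:
\begin{equation*}
\dlp(L[w],\mu)\leq \dlp(L[w],L[\underline u])+\dlp(L[\underline u],\mu).
\end{equation*}
The main work is controlling the length ratios $|\underline u^i|/n$, $N|\underline u^i|/|\underline v|$ and $|\underline v|/T$ that enter the function $h$.

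\textbf{Step 1: the term $\dlp(L[\underline u],\mu)$.} Since all $u^i$ have length $n$, we have $L[\underline u]=\frac1N\sum_i L[u^i]$. If $N$ is even, this equals $\frac12(A_1+A_2)$, where $A_k$ is the average of the $L[u^i]$ with $i$ of parity $k$. By convexity of LP-balls (the LP distance is jointly quasi-convex under mixtures, since $\dlp(\sum\lambda_i\nu_i,\sum\lambda_i\nu'_i)\leq\max_i\dlp(\nu_i,\nu'_i)$), we get $\dlp(A_k,\mu_k)\leq\delta$, and hence $\dlp(L[\underline u],\mu)\leq\delta$. If $N$ is odd, there is an imbalance of weight $1/N$, which costs at most $1/N$ in total variation. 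Since $N\geq T/(n+r\tau_K)-1\geq 1/\delta-1$, this extra term is $O(\delta)$. This is where the budget $3\delta$ should come from.

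\textbf{Step 2: lower bounds on the sliced lengths.} For each $i$, $\dlp(L[u^i],\mu_k)\leq\delta$ and $\mu_k(K^0)\geq1-\epsilon$ give, by definition of LP, that $L[u^i](K^{0,\delta})\geq 1-\epsilon-\delta$, where $K^{0,\delta}\subseteq K$ is the $\delta$-enlargement. So at least $n(1-\epsilon-\delta)\geq1$ letters of $u^i$ lie in $K$. In particular $\sli_n(u^i)\neq(e,\ldots,e)$, which is needed to apply Proposition~\ref{prop: geographic ineq coupling map}. Moreover, every letter in $K$ belongs to some $K_j$, and hence lies inside the slice $u^{i,j}$. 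Therefore $(1-\epsilon-\delta)n\leq|\underline u^i|\leq n$.

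\textbf{Step 3: bounding the ratios.} From Step 2, $|\underline u^i|/n\in[1-\epsilon-\delta,1]$ and $|\underline v|\in[N(1-\epsilon-\delta)n,Nn]$. It follows that $N|\underline u^i|/|\underline v|\in[1-\epsilon-\delta,(1-\epsilon-\delta)^{-1}]$. For $T$, the choice of $N$ gives
\begin{equation*}
T\leq (N+1)(n+r\tau_K)\leq Nn(1+r\delta)+T\delta,
\end{equation*}
using $\tau_K\leq n\delta$ and $n+r\tau_K\leq T\delta$. Hence $T(1-\delta)\leq Nn(1+r\delta)$, and so
\begin{equation*}
|\underline v|/T\geq(1-\epsilon-\delta)\frac{1-\delta}{1+r\delta},
\end{equation*}
while $|\underline v|/T\leq1$.

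\textbf{Step 4: conclusion.} We expect the main obstacle to be converting these two-sided ratio bounds into a single $h$-value. Since $h(x)=h(1/x)$ and $h$ is monotone on each side of $1$, every ratio lying in $[a,1/a]$ with $a=(1-\epsilon-\delta)\frac{1-\delta}{1+r\delta}\leq1-\epsilon-\delta$ has $h$-value at most $h(a)$. Summing the contributions ($1+1+2$ copies of $h(a)$) gives $4h(a)$, and adding the $O(\delta)$ from Step 1 yields~\eqref{eq: fine geographic ineq coupling map}.
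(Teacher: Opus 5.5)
Your proposal is correct and follows essentially the same route as the paper. You use the triangle inequality through $L[\underline u]$, bound the first term with Proposition~\ref{prop: geographic ineq coupling map}, and bound the mixture term by $\delta+1/N$; your quasi-convexity of L\'evy-Prokhorov balls plus a total-variation correction for the parity imbalance is exactly what Lemma~\ref{lemma: dlp with convex sums} packages. You then feed the same three ratio bounds into the monotonicity and the symmetry $h(x)=h(1/x)$ of $h$. The one detail to make explicit is that $1/N\leq \delta/(1-\delta)\leq 2\delta$ requires $\delta\leq 1/2$, which holds under the standing assumption $\delta<(1-\epsilon)/2$.
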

\begin{proof}
	By definition of $K$ and since each $L[u^i]$ is $\delta$-close to $\mu_1$ or $\mu_2$, we have $L[u^i](K)\geq 1-\epsilon-\delta$. Hence at least one letter of each $u^i$ belongs to $K$, implying that $\underline u^i:=\sli_n(u^i)\neq(e,\ldots,e)$. 
	By Proposition~\ref{prop: geographic ineq coupling map}, we have
	\begin{equation*}
		\dlp(L[w], \mu)\leq\frac 1N\sum_{i=1}^N\Big(h\Big(\frac{N|\underline u^i|}{|\underline v|}\Big)+h\Big(\frac{|\underline u^i|}{n}\Big)\Big)+ 2h\Big(\frac{|\underline v|}{T}\Big)+\dlp(L[\underline u],\mu),
	\end{equation*}
	where $\underline v$ is as in Proposition~\ref{prop: geographic ineq coupling map}.
	Since $L[\underline u]=\frac 1N L[u^1]+\ldots+\frac 1NL[u^N]$ and $\mu=\frac12\mu_1+\frac12\mu_2$, Lemma~\ref{lemma: dlp with convex sums} allows us to bound the last term by
	\begin{equation*}
		\delta+ \bigg|\frac12-\frac{\lceil\frac N2\rceil}{N}\bigg|
		+ \bigg|\frac12-\frac{\lfloor\frac N2\rfloor}{N}\bigg|\leq \delta+\frac 1N\leq 3\delta.
	\end{equation*}
	For the other terms, we need to prove that the quantities $\frac{N|\underline u^i|}{|\underline v|}$, $\frac{|\underline u^i|}{n}$ and $\frac{|\underline v|}{T}$ are close to $1$. Let $1\leq i\leq N$ and let $k=1$ if $i$ is odd and $k=2$ if $i$ is even. By the choice of $K$ as the $\delta$-neighborhood of $K^0$, the bound $\dlp(L[u^i],\mu_{k})\leq \delta$ yields 
	\begin{equation*}
		L[u^i](K)+\delta\geq \mu_k(K^0)\geq 1-\epsilon,
	\end{equation*}
	hence $u^i$ has at least $(1-\epsilon -\delta)n$ letters in $K$. Thus, by definition of $\underline u^i$, we have
	\begin{equation*}
			1-\epsilon -\delta\leq \frac{|\underline u^i|}{n}\leq 1.
	\end{equation*}
	Since $|\underline v|=|\underline u^1|+\ldots +|\underline u^N|$, we have
	\begin{align*}
		1-\epsilon -\delta\leq \frac{N|\underline u^i|}{|\underline v|}&\leq \frac 1{1-\epsilon -\delta}.
	\end{align*}
	Moreover, by definition of $N$, $T$ and $n$, we have
	\begin{equation*}
		(1-\epsilon-\delta)\frac{1-\delta}{1+r\delta}\leq \frac {|\underline v|}T\leq 1.
	\end{equation*}
	Considering the monotonicity of $h$ on $(0,1)$ and on $(1, \infty)$, as well as the fact that $h(1/x)=h(x)$, we have 
	\begin{equation*}
		\frac 1N\sum_{i=1}^N\Big(h\Big(\frac{N|\underline u^i|}{|\underline v|}\Big)+h\Big(\frac{|\underline u^i|}{n}\Big)\Big)+ 2h\Big(\frac{|\underline v|}{T}\Big) \leq 4h\Big((1-\epsilon-\delta)\frac{1-\delta}{1+r\delta}\Big).
	\end{equation*}
	The bound of the corollary follows.
\end{proof}
\begin{lem}
	\label{theo: decoupling ineq}
	Let $n,$ $T$ and $N$ be as in Corollary~\ref{coro: advanced geographic ineq coupling map}. Let 
	\begin{align*}
		A_n^{(1)}(\delta)&=\{u\in \W_n\ |\ \dlp(L[u],\mu_1)\leq \delta\},\\
		A_n^{(2)}(\delta)&=\{u\in \W_n\ |\ \dlp(L[u],\mu_2)\leq \delta\},\\
		B_T(\epsilon, \delta)&=\{w\in \Wc_T\ |\ \dlp(L[w],\mu)\leq f(\epsilon, \delta)\}.
	\end{align*}
	Then,
	\begin{equation}
		\begin{split}
		\label{eq: supermultiplicative inequality}
		\P(A_n^{(1)}(\delta))^{\lceil N/2\rceil}\P(A_n^{(2)}(\delta))^{\lfloor N/2\rfloor}&\leq \ccou \P(B_T(\epsilon,\delta)),
		\\ \ccou&= \big(c_K^r(\tau_K+1)^{r}(n+1)^{2r+1}\big)^{N}.
		\end{split}
	\end{equation}
\end{lem}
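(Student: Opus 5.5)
The plan is to apply Proposition~\ref{prop: proba ineq coupling map} with $W=B_T(\epsilon,\delta)$, which is a measurable subset of $\Wc_T$. We then show that the product set of admissible lists is contained in the corresponding set $U$.

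Define
\begin{equation*}
	\widetilde U=\big\{\underline u=(u^1,\ldots,u^N)\in \W_n^N\ \big|\ u^i\in A_n^{(1)}(\delta)\ \hbox{for $i$ odd},\ u^i\in A_n^{(2)}(\delta)\ \hbox{for $i$ even}\big\}.
\end{equation*}
Since $\P^{\otimes N}$ is a product measure, $\P^{\otimes N}(\widetilde U)$ is exactly the left-hand side of~\eqref{eq: supermultiplicative inequality}. There are $\lceil N/2\rceil$ odd indices and $\lfloor N/2\rfloor$ even indices, and $\P$ charges $\Wc_n\setminus\W_n$ with zero mass, so replacing $\Wc_n$ by $\W_n$ costs nothing.

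Next, we check that $\widetilde U\subseteq U:=\{\underline u\in\W_n^N\mid \cou_{N,n,T}(\underline u)\subseteq W\}$. Two things are needed here.
\begin{enumerate}
	\item The coupling map must be defined. This requires $T\geq Nn+Nr\tau_K$, which holds by the choice $N=\lfloor T/(n+r\tau_K)\rfloor$, as noted in the footnote of Corollary~\ref{coro: advanced geographic ineq coupling map}.
	\item For every $\underline u\in\widetilde U$ and every $w\in\cou_{N,n,T}(\underline u)$, we need $\dlp(L[w],\mu)\leq f(\epsilon,\delta)$. This is exactly the conclusion~\eqref{eq: fine geographic ineq coupling map} of Corollary~\ref{coro: advanced geographic ineq coupling map}. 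Its hypotheses on $n$, $T$ and on the odd/even closeness to $\mu_1,\mu_2$ are precisely the defining conditions of $\widetilde U$.
\end{enumerate}
Hence $\cou_{N,n,T}(\underline u)\subseteq B_T(\epsilon,\delta)$, that is, $\underline u\in U$.

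Finally, monotonicity and Proposition~\ref{prop: proba ineq coupling map} give
\begin{equation*}
	\P^{\otimes N}(\widetilde U)\leq \P^{\otimes N}(U)\leq \big(c_K^r(\tau_K+1)^r(n+1)^{2r+1}\big)^N\P(B_T(\epsilon,\delta)),
\end{equation*}
which is the claim with the stated constant $\ccou$.

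The only point needing care is measurability: $\widetilde U$ must be measurable, and the inclusion should be used as $\P^{\otimes N}(\widetilde U)\leq\P^{\otimes N}(U)$ without needing the measurability of $U$ itself. If $U$ is not known to be measurable, we would instead rerun the proof of Proposition~\ref{prop: proba ineq coupling map} with $U$ replaced by $\widetilde U$; that argument uses only $\mathbf 1_{\widetilde U}\leq \mathbf 1_{V}(\sigma(\underline v))$. The set $\widetilde U$ itself is measurable because $u\mapsto L[u]$ is continuous from $\Wc_n$ to $\Pc(\R^d)$, so each $A_n^{(k)}(\delta)$ is closed in $\W_n$.
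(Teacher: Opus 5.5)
Your proposal is correct and follows the paper's proof: Corollary~\ref{coro: advanced geographic ineq coupling map} gives the inclusion, and Proposition~\ref{prop: proba ineq coupling map} then gives the bound. The only difference is minor. The paper applies the proposition with $W=\cou_{N,n,T}(A_{n,N}(\delta))$ and then uses $\P(\cou_{N,n,T}(A_{n,N}(\delta)))\leq \P(B_T(\epsilon,\delta))$, whereas you take $W=B_T(\epsilon,\delta)$ directly, which is slightly cleaner because $B_T(\epsilon,\delta)$ is closed and hence measurable.
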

\begin{proof}
	By Corollary~\ref{coro: advanced geographic ineq coupling map}, 
	\begin{align*}
		\cou_{N,n,T}(A_{n,N}(\delta))&\subseteq B_T(\epsilon, \delta),
		\\\hbox{where } A_{n,N}(\delta)&:=A_n^{(1)}(\delta)\times A_n^{(2)}(\delta)\times A_n^{(1)}(\delta)\times A_n^{(2)}(\delta)\times\ldots \times A_n^{(2-N+2\lfloor N/2\rfloor)}(\delta).
	\end{align*}
	Hence, by Proposition~\ref{prop: proba ineq coupling map}, we have
	\begin{align*}
		\P^{\otimes N}(A_{n,N}(\delta))
		&\leq \P^{\otimes N }(\{\underline u\in \W_n^N\ |\ \cou_{N,n,T}(\underline u)\subseteq\cou_{N,n,T}(A_{n,N}(\delta))\})
		\\&\leq \big(c_K^r(\tau_K+1)^{r}(n+1)^{2r+1}\big)^{N} \P(\cou_{N,n,T}(A_{n,N}(\delta)))
		\\&\leq \big(c_K^r(\tau_K+1)^{r}(n+1)^{2r+1}\big)^{N} \P (B_{T}(\epsilon,\delta)).
	\end{align*}
	The proof is complete.
\end{proof}
Lemma~\ref{theo: decoupling ineq} provides the supermultiplicative inequality\footnote{\eqref{eq: supermultiplicative inequality} is not \emph{stricto sensu} a supermultiplicative inequality, yet it is enough for the purpose of proving the existence of the Ruelle-Lanford function.} mentioned in the discussion below Lemma~\ref{lem: RL function implies LDP}. We now use this inequality to prove Proposition~\ref{prop: IRL for admissible measures}, effectively completing the proof of the weak LDP.
\begin{proof}[Proof of Proposition~\ref{prop: IRL for admissible measures}]
	\addcontentsline{toc}{subsubsection}{Proof of Proposition~\ref{prop: IRL for admissible measures}}
	We shall divide the logarithm of~\eqref{eq: supermultiplicative inequality} by $T$ and pass to the limit as $n\to \infty$ and $T\to\infty$ to prove the existence of the Ruelle-Lanford function.
	Before proceeding, notice that using $N=\big\lfloor \frac{T}{n+r\tau_K}\big\rfloor$ in the definition of $\ccou$ yields
	\begin{equation*}
		\lim_{T\to \infty}\frac 1T\log\ccou=\frac{1}{n+r\tau_K}\log\big(c_K^r(\tau_K+1)^{r}(n+1)^{2r+1}\big),
	\end{equation*}
	thus
	\begin{equation}
		\label{eqloc: limit of ccou}
		\lim_{n\to \infty}\lim_{T\to \infty}\frac 1T\log \ccou=0.
	\end{equation}
	We can now proceed to the proof of the existence of the Ruelle-Lanford function.
	Let $\mu_1=\mu_2=\mu$ in Lemma~\ref{theo: decoupling ineq} and let $A_n(\delta):=A_{n}^{(1)}(\delta)=A_n^{(2)}(\delta)$. For suitable integers $n$ and $T$, this lemma yields
	\begin{equation*}
		\frac 1T\left\lfloor\frac{T}{n+r\tau_K}\right\rfloor\log\P(A_n(\delta))\leq \frac 1T\log \ccou +\frac 1T\log \P(B_T(\epsilon,\delta)).
	\end{equation*} 
	In this inequality, we take first the limit inferior as $T\to \infty$, and then the limit superior as $n\to \infty$. By~\eqref{eqloc: limit of ccou}, we have
	\begin{equation}
		\label{eqloc: ssup(B) leq sinf(B)}
		\overline s(\Bc_{\LP}(\mu, \delta))\leq 0+ \underline s(\Bc_{\LP}(\mu, f(\epsilon, \delta))).
	\end{equation} 
	Since $\lim_{\epsilon \to 0}\lim_{\delta\to 0} f(\epsilon, \delta)=0$,
	taking the limit in~\eqref{eqloc: ssup(B) leq sinf(B)} as $\delta\to 0$, and then as $\epsilon \to 0$ yields $\overline s(\mu)\leq \underline s(\mu)$, which  shows that the Ruelle-Lanford function exists at $\mu$.
	We now prove that the rate function $\IRL$ satisfies~\eqref{eq: convexity of IRL}. For any choice of admissible $\mu_1$ and $ \mu_2$ such that $\mu:=\frac12\mu_1+\frac12\mu_2$ is admissible, and for proper choice of $n$ and $T$, Lemma~\ref{theo: decoupling ineq} yields
	\begin{equation*}
			\frac {\big\lceil \frac N2\big \rceil}T\log\P(A_n^{(1)}(\delta))+\frac {\big\lfloor \frac N2\big \rfloor}T\log\P(A_n^{(2)}(\delta))\leq \frac 1T\log \ccou +\frac 1T\log \P(B_T(\epsilon,\delta)).
	\end{equation*}
	In this inequality, we take first the limit inferior as $T\to \infty$, and then the limit inferior as $n\to \infty$. By~\eqref{eqloc: limit of ccou}, we have
	\begin{equation*}
			\frac 12\underline s(\Bc_{\LP}(\mu_1, \delta))+\frac 12\underline s(\Bc_{\LP}(\mu_2, \delta))\leq 0+ \underline s(\Bc_{\LP}(\mu, f(\epsilon, \delta))).
	\end{equation*}
	Since the Ruelle-Lanford function exists at $\mu_1$, $\mu_2$ and $\mu$,
	taking this bound to the limit as $\delta\to 0$, and then as $\epsilon \to 0$ yields
	\begin{equation}
		\label{eqloc: midpoint convexity IRL}
		-\frac 12\IRL(\mu_1)-\frac 12\IRL(\mu_2)\leq -\IRL (\mu).
	\end{equation}
	Let now $\nu_1,\nu_2\in \Ac$ and $\lambda\in (0,1)$ be such that $\nu:=\lambda\nu_1+(1-\lambda)\nu_2\in \Ac$. By the discussion following Definition~\ref{defi: admissible measure}, we have $[\nu_1,\nu_2]\subseteq \Ac$. Therefore, by repeatedly applying~\eqref{eqloc: midpoint convexity IRL} along the dyadic approximations of $\lambda$, we obtain that 	
	\begin{equation*}
		\lambda\IRL(\nu_1)+(1-\lambda)\IRL(\nu_2)\geq \liminf_{\nu'\to\nu}\IRL (\nu').
	\end{equation*}
	Since $\IRL$ is lower semicontinuous, this proves~\eqref{eq: convexity of IRL}.
\end{proof}
\subsection{An additional property of the Ruelle-Lanford function}
\label{section: decoupling}
In this section, we prove Proposition~\ref{prop: IRL linearity}. The results of this section are not used in the proof of the weak LDP itself. Assume~\ref{hyp: pseudo lsc density},~\ref{hyp: pseudo uniformity} and~\ref{hyp: null border}.
Let $\lambda_1,\lambda_2\in (0,1)$ be such that $\lambda_1+\lambda_2=1$ and let $\mu_1,\mu_2$ be two admissible measures such that the probability measure $\mu:=\lambda_1\mu_1+\lambda_2\mu_2$ is admissible too. 
We assume that $\Jc$ is partitioned into two disjoint sets $\Jc_1$ and $\Jc_2$ such that 
\begin{equation*}
	\supp \mu_1\subseteq \bigcup_{j\in \Jc_1}\overline C_j,\qquad
	\supp \mu_2\subseteq \bigcup_{j\in \Jc_2}\overline C_j.
\end{equation*}
By Proposition~\ref{prop: IRL for admissible measures}, the quantities $\IRL(\mu)$, $\IRL(\mu_1)$ and $\IRL(\mu_2)$ exist in $[0,\infty]$. We shall prove that they satisfy~\eqref{eq: linearity 2}.
Let $\epsilon>0$. Without loss of generality, we assume that
\begin{equation}
	\label{eqloc: epsilon smaller than min}
	0<\epsilon<\min\Big(\frac {\lambda_1}2,\frac{\lambda_2}2\Big)=\min\Bigg(\frac12\mu\Bigg(\bigcup_{j\in \Jc_1}C_j\Bigg),\frac12\mu\Bigg(\bigcup_{j\in \Jc_2}C_j\Bigg)\Bigg).
\end{equation}
Since $\mu_1$ and $\mu_2$ are absolutely continuous with respect to $\leb$,  
Assumption~\ref{hyp: null border} yields that $\mu(\partial C)=\mu_1(\partial C)=\mu_2(\partial C)=0$ so that $\mu(C)=\mu_1(C)=\mu_2(C)=1$.
We choose a finite subfamily of pairwise comparable classes of $(C_j)_{j\in \mathcal J}$, that we now relabel as $C_1, C_2,\ldots ,C_r$, such that $\mu(C_j)>0$ and
\begin{align*}
	\begin{cases}
		\mu_1(C_1)+\mu_1(C_2)+\ldots +\mu_1(C_r)\geq 
		1-\frac\epsilon2
		,\\
		\mu_2(C_1)+\mu_2(C_2)+\ldots +\mu_2(C_r)\geq 1-\frac\epsilon2,
		\\
		\beta\leadsto1\leadsto 2\leadsto \ldots \leadsto r.
	\end{cases}
\end{align*}
The existence of such a subfamily of $(C_j)_{j\in \mathcal J}$ is granted by the admissibility of $\mu$.
The set $C:=C_1\cup\ldots \cup C_r$ is split in two according to the partition $\Jc=\Jc_1\sqcup\Jc_2$.
We let $\eta:\{1,\ldots, r\}\to \{1,2\}$ be the map defined by $\eta(j)=1$ if $j\in \Jc_1$ and $\eta(j)=2$ if $j\in\Jc_2$, and we set
\begin{equation*}
	C^{(1)}=\bigcup_{j\in \eta^{-1}(1)}C_j\subseteq C,\qquad C^{(2)}=\bigcup_{j\in \eta^{-1}(2)}C_j\subseteq C.
\end{equation*}
Since $\lambda_1$ and $\lambda_2$ are no larger than 1, we have 
	$\mu\big(C^{(1)}\big)\geq \lambda_1-\epsilon/ 2>0$
	and  $\mu\big(C^{(2)}\big)\geq \lambda_2-\epsilon /2>0.$
Since $\mu$ is absolutely continuous with respect to the Lebesgue measure, Assumption~\ref{hyp: null border} implies that $C^{(1)}$ and $C^{(2)}$ are continuity sets of $\mu$.
By the Portmanteau theorem, there exists $\delta>0$ such that $|\nu(C^{(1)})-\mu(C^{(1)})|< \epsilon/ 2$ and $|\nu(C^{(2)})-\mu(C^{(2)})|< \epsilon/ 2$ for all $\nu\in\Bc_\LP(\mu,\delta)$, 
further implying 
\begin{equation}
	\label{eq: nu (Cgamma)}
	\nu\big(C^{(1)}\big)> \lambda_1-\epsilon>0,\qquad \nu\big(C^{(2)}\big)>\lambda_2-\epsilon>0,\qquad \nu\in\Bc_\LP(\mu,\delta).
\end{equation}
The measures $\mu_1$ and $\mu_2$ are inner regular because they are absolutely continuous with respect to the Lebesgue measure.
Hence, there exists a compact set $K^0\subseteq C$ such that $\mu_1(K ^0)\geq 1-\epsilon$ and $\mu_2(K ^0)\geq 1-\epsilon$.
As a consequence, we also have $\mu(K^0)\geq 1-\epsilon$.
Since $K$ does not depend on $\delta$, let us assume that
\begin{equation*}
	0<2\delta<\min\big(d(K^0, \R^d\setminus C), {\lambda_1-\epsilon},\lambda_2-\epsilon\big),
\end{equation*}
without loss of generality.
We set
\begin{equation*}
	K= \{x\in \R^d\ |\ d(x,K^0)\leq \delta\}\subseteq C.
\end{equation*} 
The set $K$ is an admissible compact set.
As before, we set $K_\beta=C_\beta=\{\xinit\}$ and $K_j=C_j\cap K$ for all $1\leq j\leq r$. By the assumption on $\delta$, note that $d(K_j,\partial C_j)>\delta$ for all $1\leq j\leq r$.
The set $K$ is also split in two according to the partition $\Jc=\Jc_1\sqcup\Jc_2$; we set
\begin{equation*}
	K^{(1)}=\bigcup_{j\in \eta^{-1}(1)}K_j,\qquad K^{(2)}=\bigcup_{j\in \eta^{-1}(2)}K_j.
\end{equation*}

\subsubsection{The decoupling map}
\label{section: decoupling map}
We reuse the slicing and stitching maps of Section~\ref{section: slicing and stitching} and compose them in an alternative way to define the \emph{decoupling map}.
For $\gamma\in \{1,2\}$, we set $r_\gamma=\#\eta^{-1}(\gamma)$ and we denote by $j_\gamma(i)$ the $i$-th integer $j$ satisfying $\eta(j)=\gamma$, so that 
\begin{equation*}
	\{1,\ldots,r\}=\{j_1(1),\ldots,j_{1}(r_1)\}\sqcup \{j_2(1),\ldots,j_2(r_2)\}.
\end{equation*}
For all $u\in \Wc$, we denote by $|u|_\gamma=|u|_{C^{(\gamma)}}$ the number of letters of $u$ that belong to $C^{(\gamma)}$, and 
\begin{equation*}
	L^{(\gamma)}[u]=\frac 1{|u|_{\gamma}}\sum_{i=1}^{|u|}\mathbf 1_{C^{(\gamma)}}(u_i)\delta_{u_i}\in \Pc(\R^d),
\end{equation*}
when $|u|_\gamma>0$.
We fix two integers $T_1,T_2$ such that $T_{\gamma}\geq (\lambda_\gamma+\epsilon) n+r_\gamma\tau_K$ for $\gamma\in \{1,2\}$.
\begin{defi}[The decoupling map]
	Let $n\in \N$. Let $u\in \W_n$ be such that $|u|_\gamma\leq (\lambda_\gamma+\epsilon)n$ for $\gamma\in \{1,2\}$ and set $(u^1,\ldots ,u^r)=\sli_n(u)$.
	For $\gamma\in \{1,2\}$, we denote by $\underline v^{(\gamma)}$ the list $ (u^{j_\gamma(1)},\ldots, u^{j_\gamma(r_\gamma)})$.
	Then $(\underline v^{(1)},\underline v^{(2)})\in \Sc_{r_1, T_1}\times \Sc_{r_2,T_2}$ and we set 
	\begin{equation*}
		\decou_n^{(1)}(u)=\sti_{r_1, T_1}\big(\underline v^{(1)}\big), \qquad		\decou_n^{(2)}(u)=\sti_{r_2,T_2 }\big(\underline v^{(2)}\big), \qquad\decou_n(u)=\big(\decou_n^{(1)}(u),\decou_n^{(2)}(u)\big).
	\end{equation*}
\end{defi}
In this definition, we have indeed $(\underline v^{(1)},\underline v^{(2)})\in \Sc_{r_1, T_1}\times \Sc_{r_2,T_2}$ because the two lists $\underline v^{(\gamma)}$ are properly ordered and satisfy $|\underline v^{(\gamma)}|\leq |u|_\gamma\leq (\lambda_\gamma+\epsilon )n$, for $\gamma\in\{1,2\}$.
\begin{figure}[!tb]
	\centering
	\begin{subfigure}[b]{1\textwidth}
		\includegraphics[width=\textwidth]{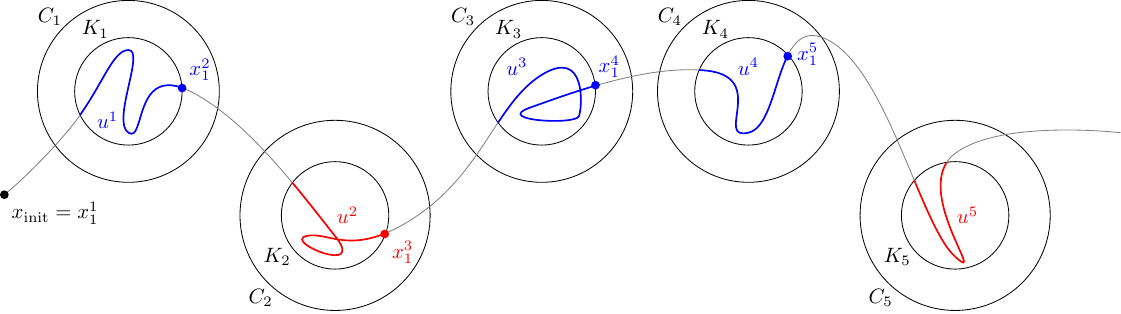}
		\captionsetup{width=0.7\textwidth}
		\captionsetup[subfigure]{skip=1em}
		\caption{The action of the slicing map on a word $u$.
			The letters $x_1^{i,j}$ used in the proof of Proposition~\ref{prop: proba ineq decoupling map} are indicated.\vspace{0.5cm}}
		\label{fig: slicing map for decoupling}
	\end{subfigure}
	\begin{subfigure}[b]{1\textwidth}
		\includegraphics[width=\textwidth]{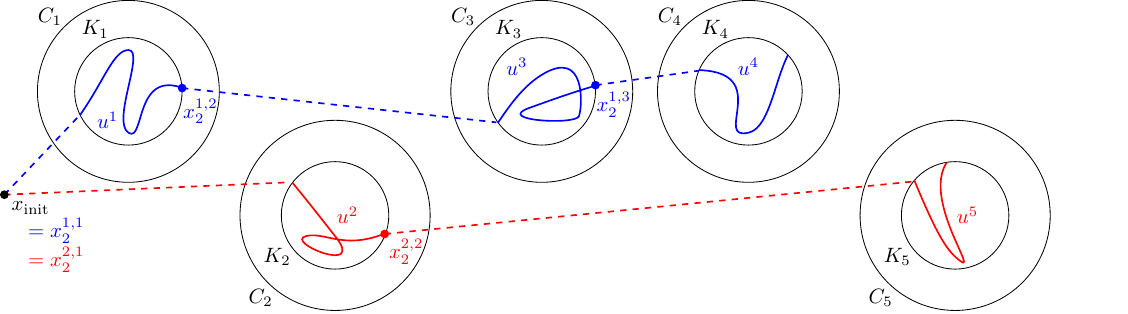}
		\captionsetup{width=0.7\textwidth}
		\caption{The action of the stitching map on the stitchable lists $(u^1,u^3,u^4)$ and $(u^2,u^5)$. The dashed lines represent the insertion of $\Wc_{\tau_k}$ between words. The letters $x_2^{\gamma,i}$ used in the proof of Proposition~\ref{prop: proba ineq decoupling map} are indicated.}
		\label{fig: stitching map for decoupling}
	\end{subfigure}
	\captionsetup{width=0.8\textwidth}
	\caption{Example of use of the decoupling map with respect to the partition $\{1,2,3,4,5\}=\{1,3,4\}\sqcup\{2,5\}$. Trajectories are depicted as smooth curves for easier readability, even though they are discrete in time. Given a word $u$, the slicing map produces five subwords $u^1,\ldots u^5$ in~\eqref{fig: slicing map for decoupling}. These subwords are then reassembled by the stitching map in~\eqref{fig: stitching map for decoupling}. }
	\label{fig: decoupling map}
\end{figure}

\begin{prop}
	\label{prop: geographic ineq decoupling map}
	Let $u\in \W_n$ be such that $|u|_\gamma\leq (\lambda_\gamma+\epsilon) n$ for $\gamma\in \{1,2\}$ and set $(u^1,\ldots, u^r)=\sli_{n}(u)$. For $\gamma\in \{1,2\}$, we denote by $\underline v^{(\gamma)}$ the list $ (u^{j_\gamma(1)},\ldots, u^{j_\gamma(r_\gamma)})$. 
	Assume that $\underline v^{(\gamma)}\neq(e,\ldots,e)$ for $\gamma\in \{1,2\}$.
	Then, for all $w\in \decou_n(u)$, 
	\begin{equation}
		\label{eq: geographic ineq decoupling map}
		\dlp(L^{(\gamma)}[u],L[w])\leq  h\Big(\frac {{|\underline v^{(\gamma)}|}}{|u|_\gamma}\Big)+2h\Big(\frac{|\underline v^{(\gamma)}|}{T_\gamma}\Big).
	\end{equation}
\end{prop}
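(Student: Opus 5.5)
The plan is a triangle inequality through the intermediate measure $L[\underline v^{(\gamma)}]$, modelled on Proposition~\ref{prop: geographic ineq coupling map}. Fix $\gamma\in\{1,2\}$ and $w\in\decou_n^{(\gamma)}(u)=\sti_{r_\gamma,T_\gamma}(\underline v^{(\gamma)})$. Since $\underline v^{(\gamma)}\neq(e,\ldots,e)$, the measure $L[\underline v^{(\gamma)}]$ is well defined, and we write
\begin{equation*}
	\dlp(L^{(\gamma)}[u],L[w])\leq \dlp(L^{(\gamma)}[u],L[\underline v^{(\gamma)}])+\dlp(L[\underline v^{(\gamma)}],L[w]).
\end{equation*}
The second term is handled directly. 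We have $\underline v^{(\gamma)}\in\Sc_{r_\gamma,T_\gamma}$, as noted after the definition of the decoupling map. Proposition~\ref{prop: geographic ineq stitching map}, applied with $k=r_\gamma$ and $T=T_\gamma$, therefore bounds it by $2h(|\underline v^{(\gamma)}|/T_\gamma)$.

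For the first term, bound $\dlp\leq\dtv$ and repeat the computation from the proof of~\eqref{eq: geographic ineq slicing map 1}, with $L^{(\gamma)}[u]$ in place of $L[u]$ and $|u|_\gamma$ in place of $n$. The key observation is that every letter of every $u^{j_\gamma(i)}$ is a letter of $u$ lying in $C^{(\gamma)}$, and distinct slices occupy disjoint positions of $u$. Indeed, each slice $u^{j}$ runs from the first to the last letter of $u$ in $K_j$. The class $C_j$ is open and the classes are pairwise disjoint. Along a trajectory in $\W$ one cannot leave $C_j$ and later return: a return would force some letter outside $C_j$ to communicate with $C_j$ in both directions, and maximality of the class would then put that letter in $C_j$. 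So all letters of $u^j$ lie in $C_j\subseteq C^{(\gamma)}$. I expect this to be the only subtle point, and I would justify it with Lemma~\ref{lem: positive density} together with the maximality of classes.

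It follows that $\sum_i\sum_\ell\delta_{u^{j_\gamma(i)}_\ell}$ is a sub-sum of $\sum_i\mathbf 1_{C^{(\gamma)}}(u_i)\delta_{u_i}$, missing exactly $|u|_\gamma-|\underline v^{(\gamma)}|$ Dirac masses. Normalising as in Proposition~\ref{prop: geographic ineq slicing map} gives
\begin{equation*}
	\dtv(L^{(\gamma)}[u],L[\underline v^{(\gamma)}])\leq 1-\frac{|\underline v^{(\gamma)}|}{|u|_\gamma}+\frac{|u|_\gamma-|\underline v^{(\gamma)}|}{|u|_\gamma}\leq h\Big(\frac{|\underline v^{(\gamma)}|}{|u|_\gamma}\Big).
\end{equation*}
This is an admissible bound because $|\underline v^{(\gamma)}|\leq|u|_\gamma$, so the ratio lies in $(0,1]$, where $h(x)\geq 2(1-x)$. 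Here $|u|_\gamma>0$ holds since $\underline v^{(\gamma)}$ is nonempty. Summing the two estimates gives~\eqref{eq: geographic ineq decoupling map}.
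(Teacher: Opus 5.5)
Your proposal is correct and follows the paper's proof essentially verbatim: a triangle inequality through $L[\underline v^{(\gamma)}]$, the slicing-type total-variation computation for the first term, and Proposition~\ref{prop: geographic ineq stitching map} for the second term. Your justification that every letter of $u^{j}$ lies in $C_j$ is a useful addition, since the paper simply asserts this point; you obtain it by ruling out an exit from $C_j$ followed by a return along a trajectory of $\W$, using Lemma~\ref{lem: positive density} and the maximality of classes.
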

\begin{proof}
	Since $\dlp \leq \dtv$, we have
	\begin{equation}
		\label{eqloc: bound dlp Liu Lw}
		\dlp(L^{(\gamma)}[u],L[w])\leq\dtv\big(L^{(\gamma)}[u],L[\underline v^{(\gamma)}]\big)+\dtv\big(L[\underline v^{(\gamma)}],L[w]\big).
	\end{equation}
	The first term of this bound yields the first term on the right-hand side of~\eqref{eq: geographic ineq decoupling map}. Indeed, as in the proof of Proposition~\ref{prop: geographic ineq slicing map}, we have
	\begin{align*}
		\dtv\big(L^{(\gamma)}[u],L[\underline v^{(\gamma)}]\big)
		\leq
		\frac 1{{|\underline v^{(\gamma)}|}}-\frac 1{|u|_ \gamma}+\frac 1{|u|_\gamma}\bigg |\sum_{k=1}^{n}\mathbf 1_{C^{(\gamma)}}(u_k)\delta_{u_k}-\sum_{j\in \eta^{-1}(\gamma)}\sum_{k=1}^{|u^j|}\delta_{u^j_k}\bigg|_\TV.
	\end{align*}
	On the right-hand side,
	since all the letters of $u^j$ belong to $C_j$, every Dirac term that appears in the rightmost sum also appears in the left one. Hence,
	\begin{align*}
		\dtv\big(L^{(\gamma)}[u],L[\underline v^{(\gamma)}]\big)\leq 
		\frac 1{{|\underline v^{(\gamma)}|}}-\frac 1{|u|_ \gamma}+1-\frac {{|\underline v^{(\gamma)}|}}{|u|_\gamma}
		\leq h\Big(\frac {{|\underline v^{(\gamma)}|}}{|u|_\gamma}\Big).
	\end{align*}
	The second term of the bound~\eqref{eqloc: bound dlp Liu Lw}  yields the second term on the right-hand side of~\eqref{eq: geographic ineq decoupling map} by Proposition~\ref{prop: geographic ineq stitching map}.
\end{proof}
\begin{prop}
	\label{prop: proba ineq decoupling map}
	Let $W$ be a measurable set of $\Wc_{T_1}\times \Wc_{T_2}$ and let 
	\begin{equation*}
		U=\{u\in \W_n\ |\ \decou_{n,T_1,T_2}(u)\subseteq W\}.
	\end{equation*}
	Then,
	\begin{equation}
		\label{eq: proba ineq decoupling map}
		\P (U)\leq (n+1)^{2r+1}(\tau_K+1)^rc_K^r\P^{\otimes 2}(W).
	\end{equation}
\end{prop}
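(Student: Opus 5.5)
I would mirror the proof of Proposition~\ref{prop: proba ineq coupling map}, now with a single input word $u$ and two output words. First, partition $U$ according to the lengths of the pieces of $u$. Write $u=\zeta^1u^1\zeta^2u^2\ldots\zeta^ru^r\zeta^{r+1}$ as in~\eqref{eq: decomposition ui}, with the convention $\zeta^j=e$ when $u^j=e$. Fix the lengths $k_j=|\zeta^j|$ ($1\leq j\leq r+1$) and $l_j=|u^j|$ ($1\leq j\leq r$). This gives at most $(n+1)^{2r+1}$ choices. Also fix the values $\tau_j\in\{0,\ldots,\tau_K\}$ of $\tau(x_2^{j},u^j_1)$, giving at most $(\tau_K+1)^r$ choices. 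Here $x_2^{j}$ denotes the last letter of the last nonempty word preceding $u^j$ in the list $\underline v^{(\eta(j))}$, or $\xinit$ if there is none. Call the resulting piece $U_{k,l,\tau}$. It then suffices to show $\P(U_{k,l,\tau})\leq c_K^r\P^{\otimes 2}(W)$ on each piece.

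On a fixed piece, expand $\P(\d u)$ as a product $\prod_j p(x_1^j,\d\zeta^j)p(z^j,\d u^j)$, where $x_1^j$ is the last letter of the last nonempty $u^{j'}$ with $j'<j$ (or $\xinit$) and $z^j$ is the letter preceding $u^j$. The indicator $\mathbf 1_U$ depends only on the $u^j$, so we can integrate out the $\zeta^j$. This turns the measure into $\prod_{j=1}^r p^{k_j}(x_1^j,\d u^j)$, and the last factor $p(x_1^{r+1},\Wc_{k_{r+1}})$ equals $1$.

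Next apply Assumption~\ref{hyp: pseudo uniformity with tau} to each factor. The letter $x_1^j$ lies in some $K_{j'}$ with $j'\leadsto j$ (or is $\xinit$). The letter $x_2^j$ also lies in $K_j^-$, since within $\underline v^{(\gamma)}$ the classes are increasing and $\xinit\leadsto$ everything. The bound extends to words whose first letter lies in $K_j$. We obtain $p^{k_j}(x_1^j,\d u^j)\leq c_K\,p^{\tau_j}(x_2^j,\d u^j)$, for a total factor $c_K^r$.

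Then expand each $p^{\tau_j}(x_2^j,\d u^j)=\int p(x_2^j,\d\xi^j)p(y^j,\d u^j)$. Group the factors by $\gamma$: for $\gamma\in\{1,2\}$, the words $\xi^{j_\gamma(1)}u^{j_\gamma(1)}\ldots\xi^{j_\gamma(r_\gamma)}u^{j_\gamma(r_\gamma)}$ are followed by a free tail $\xi^{\gamma,\mathrm{end}}$ of length $T_\gamma-\sum(\tau_j+l_j)$. The tail can be inserted with total mass $1$. This yields a word $w^{(\gamma)}\in\Wc_{T_\gamma}$ whose density under the product of kernels is exactly $\rho(\xinit,w^{(\gamma)})$. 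The point is that each chain starts from $\xinit$ and the predecessor of each $\xi^j$ is exactly $x_2^j$, the last letter of the previous nonempty block of the same list.

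By construction, $(w^{(1)},w^{(2)})\in\decou_n(u)\subseteq W$, so the indicator is bounded by $\mathbf 1_W(w^{(1)},w^{(2)})$. Since the variables split into two independent chains, the integral becomes $\P^{\otimes 2}(W)$. Summing over the pieces then gives~\eqref{eq: proba ineq decoupling map}.

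The main obstacle is the bookkeeping in this change of variables. One must check that the kernel measure on the interleaved variables factors as $p(\xinit,\d w^{(1)})\,p(\xinit,\d w^{(2)})$. This requires the measurable dependence of $x_2^j$ and $\tau$ on $u$ to be compatible with fixing $\tau_j$ on the piece, so that the lengths of the inserted $\Wc_{\tau_j}$ agree with Definition~\ref{defi: stitching map}. It also requires handling empty $u^j$ via the $\delta_e$ convention.
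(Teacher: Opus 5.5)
Your proof is correct. It follows the paper's proof step for step except in the final step. The shared steps are: the partition into $U_{k,l,\tau}$, integrating out the $\zeta^j$, replacing $x_1^j$ by $x_2^j$ via Assumption~\ref{hyp: pseudo uniformity with tau}, expanding each $p^{\tau_j}$ and inserting the tails, and the count $(n+1)^{2r+1}(\tau_K+1)^r$.

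The paper concludes as follows. It first reduces to rectangles $W=W_1\times W_2$ and writes the indicator as $\mathbf 1_{V^{(1)}}(\underline v^{(1)})\mathbf 1_{V^{(2)}}(\underline v^{(2)})$. It then splits the integral into $\mathcal I_1\mathcal I_2$ by Fubini--Tonelli and bounds each factor by $\P(W_\gamma)$.

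You instead keep $W$ arbitrary and bound the indicator pointwise by $\mathbf 1_W(w^{(1)},w^{(2)})$. You then identify the kernel measure on the interleaved variables with $p(\xinit,\d w^{(1)})\,p(\xinit,\d w^{(2)})$. This is exactly what the paper itself does with a single output word in Proposition~\ref{prop: proba ineq coupling map}.

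Your route is the better one. Write $U_W$ for the set $U$ associated with $W$. The paper justifies the reduction only by noting that rectangles form a generating $\pi$-system, but $W\mapsto \P(U_W)$ is not a measure. A product set $\decou_n(u)$ can lie in $W\cup W'$ without lying in either, so $U_{W\cup W'}$ may strictly contain $U_W\cup U_{W'}$. Hence the class of $W$ satisfying~\eqref{eq: proba ineq decoupling map} is not evidently closed under disjoint unions or differences, and the $\pi$-system remark does not by itself justify the reduction.

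Your argument gives the bound directly for every measurable $W$. The paper's version is still enough for its only use, where $W=B^{(1)}_{T_1}(\epsilon,\delta)\times B^{(2)}_{T_2}(\epsilon,\delta)$ is a rectangle.

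The factorisation you flag as the main obstacle is the same observation the paper uses to split into $\mathcal I_1\mathcal I_2$. Each $x_2^j$ is a letter of $\underline v^{(\eta(j))}$, so the density $\prod_j\rho^{\tau_j}(x_2^j,u^j)$ is a function of $\underline v^{(1)}$ times a function of $\underline v^{(2)}$. Moreover, $(u^j,\xi)\mapsto(w^{(1)},w^{(2)})$ is a coordinate permutation and therefore preserves Lebesgue measure.
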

\begin{proof}
	Since the Borel $\sigma$-algebra of $\Wc_{T_1}\times \Wc_{T_2}$ is generated by the $\pi$-system of Cartesian products of the form $W=W_1\times W_2$ where $W_\gamma$ is a measurable set of $\Wc_{T_\gamma}$, $\gamma\in \{1,2\}$, we make the assumption that $W=W_1\times W_2$ without loss of generality.
	The general structure of this proof is the same as that of the proof of Proposition~\ref{prop: proba ineq coupling map}.
	The notation of this proof mirrors that of the proof of Proposition~\ref{prop: proba ineq coupling map}; we manipulate a word $u\in U$ on which several variables depend. We set $(u^1,\ldots ,u^r):=\sli_n(u)$ and we define $\zeta^j$ by the expression
	\begin{equation}
		\label{eqloc: decomposition u}
		u=\zeta^1u^1\zeta^2u^2\ldots \zeta^ru^r\zeta^{r+1},
	\end{equation}
	where by convention $\zeta^j=e$ when $u^j=e$. The words $\zeta^j$ implicitly depend on $u$. For $\gamma\in \{1,2\}$, we set $\underline v^{(\gamma)}$ to be the list of those $u^j$ such that $ \eta(j)=\gamma$. As in the proof of Proposition~\ref{prop: proba ineq coupling map}, we also introduce some specific letters that implicitly depend on $u$. We denote by $x_1^j$ the letter preceding $\zeta^j$ in $u$, or $\xinit$ if there are none, and we denote by $z^j$ the letter preceding $u^j$ in $u$, or $\xinit$ if there are none. In addition, for $\gamma\in\{1,2\}$ and $1\leq i\leq r_\gamma$, the word $u^{j_\gamma(i)}$ is an element of the list $\underline v^{(\gamma)}$, so we denote by $x_2^{\gamma, i}$ the last letter of the non-empty word preceding $u^{j_\gamma(i)}$ in $\underline v^{(\gamma)}$, or $\xinit$ if there are none. Letters $x_1^j$ and $x_2^{\gamma, i}$ are illustrated in Figure~\ref{fig: decoupling map}. To manipulate words of fixed length, we define the sets
	\begin{align*}
		\mathcal E_k&=\{u\in \W_n\ |\ \forall 1\leq j\leq r+1,\  |\zeta^j|=k_j\},
		\\\mathcal F_l&=\{u\in \W_n\ |\ \forall 1\leq j\leq r,\  |u^j|=l_j\},
		\\\mathcal G_{\tau_1}&=\{u\in \W_n\ |\ \forall 1\leq i\leq r_1,\  \tau(x_2^{1,i},u^{j_1(i)}_1)=\tau_{1,i}\},
		\\\mathcal G_{\tau_2}&=\{u\in \W_n\ |\ \forall 1\leq i\leq r_2,\  \tau(x_2^{2,i},u^{j_2(i)}_1)=\tau_{2,i}\},
	\end{align*}
	where $k=(k_j)$, $l=(l_j)$, $\tau_1=(\tau_{1,i})$ and $\tau_2=(\tau_{2, i})$ are sequences of integers such that $k_1+l_1+\ldots+ k_r+l_r+k_{r+1}=n$ and $\tau_{\gamma,i}\leq \tau_K$ for all $\gamma\in \{1,2\}$ and $1\leq i\leq r_\gamma$.
	We set, for $\gamma\in \{1,2\}$,
	\begin{align*}
		U_{k,l,\tau}&=U\cap \mathcal E_k\cap \mathcal F_l\cap \mathcal G_{\tau_1}\cap \mathcal G_{\tau_2},
		\qquad
		V_\tau^{(\gamma)}=V^{(\gamma)}\cap \sli_n(\mathcal G_{\tau_\gamma}),
	\end{align*}
	where 
	$V^{(\gamma)}=\{\underline v^{(\gamma)}\in \Sc_{r_\gamma,T_\gamma}\ |\ \sti_{r_\gamma,T_\gamma}(\underline v^{(\gamma)})\subseteq W_\gamma\}$.
	We must estimate $\P (U_{k,l,\tau})$.
	Performing the change of variables induced by~\eqref{eqloc: decomposition u}, we have
	\begin{align*}
		\mathbb P(U_{k,l,\tau})
		&=\int_{\mathcal E_k\cap \mathcal F_l}\mathbf 1_{U\cap \mathcal G_\tau}(u)p(\xinit, \d u)
		\\&=\int\ldots \int \mathbf 1_{V^{(1)}}(\underline v^{(1)})\mathbf 1_{V^{(2)}}(\underline v^{(2)})\mathbf 1_{\mathcal G_\tau}(u)
		\\&\hspace{3cm}\times \bigg(\prod_{j=1}^rp(x_1^j, \d\zeta^j)p(z^j,\d u^j)\bigg)p(x_1^{r+1},\d\zeta^{r+1}),
	\end{align*}
	where there are $2r+1$ integrals over $\Wc_{k_1},\Wc_{l_1}, \ldots ,\Wc_{l_r},\Wc_{k_{r+1}}$ in the second line. The integrals over $\Wc_{k_i}$ can be simplified by definition of $p^{k_i}$. We get
	\begin{equation}
		\label{eqloc: last bound before bifurcation}
		\mathbb P({U_{k,l,\tau}})=\int\ldots \int \mathbf 1_{V^{(1)}}(\underline v^{(1)})\mathbf 1_{V^{(2)}}(\underline v^{(2)})\mathbf 1_{\mathcal G_\tau}(u)\prod_{j=1}^rp^{k_j}(x_{1}^j,\d u^j),
	\end{equation}
	where only the $r$ integrals over $\Wc_{l_1},\ldots ,\Wc_{l_r}$ remain.
	Before going further, we use that $\{1,\ldots,r\}$ is partitioned into $\{j_1(1),\ldots, j_1(r_1)\}$ and $\{j_2(1), \ldots, j_2(r_2)\}$, so that the product involved in~\eqref{eqloc: last bound before bifurcation} can be split into
	\begin{equation*}
		\prod_{j=1}^rp^{k_j}(x_{1}^j,\d u^j)
		=\bigg(\prod_{i=1}^{r_1}p^{k_{j_1(i)}}\big(x_{1}^{j_1(i)},\d u^{j_1(i)}\big)\bigg)
		\bigg(\prod_{i=1}^{r_2}p^{k_{j_2(i)}}\big(x_{1}^{j_2(i)},\d u^{j_2(i)}\big)\bigg).
	\end{equation*}
	Now we can use Assumption~\ref{hyp: pseudo uniformity with tau} for each $p^{k_{j_\gamma(i)}}$ in~\eqref{eqloc: last bound before bifurcation}. By definition, the first letter of $u^{j_\gamma(i)}$ belongs to $K_{j_\gamma(i)}$ and both $x_1^{j_\gamma(i)}$ and $x_2^{\gamma, i}$ belong to $K_{j_{\gamma}(i)}^-$, for all $i\leq r_\gamma$.
	By Assumption~\ref{hyp: pseudo uniformity}, we obtain
	\begin{equation*}
		\begin{split}
			\mathbb P({U_{k,l,\tau}})\leq c_K^r\int\ldots \int 
			\bigg(\mathbf 1_{V^{(1)}}(\underline v^{(1)})\prod_{i=1}^{r_1}&p^{\tau_{1,i}}\big(x_2^{1,i}, \d u^{j_1(i)}\big)\bigg) 
			\\ &\times\bigg(\mathbf 1_{V^{(2)}}(\underline v^{(2)})\prod_{i=1}^{r_2}p^{\tau_{2,i}}\big(x_2^{2,i}, \d u^{j_2(i)}\big)\bigg),
		\end{split}
	\end{equation*}
	where the $r$ integrals are still taken over $\Wc_{l_1},\ldots, \Wc_{l_r}$. Thanks to the Fubini-Tonelli theorem and the partition of $\{1,\ldots, r\}$, we can also consider that the $r$ integrals are taken over $\Wc_{l_{j_1(1)}},\ldots ,\Wc_{l_{j_1(r_1)}}$ and $\Wc_{l_{j_2(1)}},\ldots ,\Wc_{l_{j_2(r_2)}}$.
	Notice that the variables involved in the first product do not depend on those involved in the second one and vice versa.
	Indeed, by definition, each $x^{1,i}_2$ is a letter of a word of $\underline v^{(1)}$ and each $x^{2,i}_2$ is a letter of a word of $\underline v^{(2)}$. Therefore, the multiple integral can be split into a product of two factors $\mathcal I_1$ and $\mathcal I_2$, where each factor $\mathcal I_\gamma$ consists in the $r_\gamma$ integrals taken over $\Wc_{l_{j_\gamma(1)}},\ldots ,\Wc_{l_{j_\gamma(r_\gamma)}}$:
	\begin{equation}
		\begin{split}
			\label{eqloc: Pukltau bound by a product}
			\mathbb P(U_{k,l,\tau})\leq 
			c_K^r
			\mathcal I_1\mathcal I_2,
			\qquad\hbox{where}\ 
			\mathcal I_\gamma=\int\ldots \int \mathbf 1_{V^{(\gamma)}}(\underline v^{(\gamma)})\prod_{i=1}^{r_\gamma}p^{\tau_{\gamma,i}}\big(x_2^{\gamma,i}, \d u^{j_\gamma(i)}\big).
		\end{split}
	\end{equation}
	In both factors $\mathcal I_\gamma$, we want to expand each $p^{\tau_{\gamma,i}}$ into an integral. For convenience, we set $\tau_{\gamma,r_\gamma+1}=T_\gamma-(\tau_{\gamma,1}+\ldots+\tau_{\gamma,r_\gamma})-(l_{j_\gamma(1)}+\ldots+l_{j_\gamma(r_\gamma)})$. 
	Given words $\xi^{\gamma, 1},\ldots,\xi^{\gamma,r_\gamma+1}$ of respective lengths $\tau_{\gamma, 1}, \ldots, \tau_{\gamma, r_\gamma+1}$, we introduce the word
	\begin{equation}
		\label{eqloc: wgamma}
		w^{\gamma}=\xi^{\gamma,1}u^{j_\gamma(1)}\xi^{\gamma, 2}u^{j_{\gamma}(2)}\ldots \xi^{\gamma,r_\gamma}u^{j_\gamma(r_\gamma)}\xi^{\gamma, r_\gamma+1}.
	\end{equation}
	Let $y^{\gamma, i}$ denote the letter preceding $u^{j_\gamma(i)}$ in $w^\gamma$, or $\xinit$ if there are none. The letter $y^{\gamma,i}$ depends implicitly on $u$ and on the choice of $\xi^{\gamma,i}$. By definition of $p^{\tau_{\gamma,i}}$, we have
	\begin{equation*}
		\mathcal I_\gamma
		\leq \int\ldots \int\mathbf 1_{V^{(\gamma)}}(\underline v^{(\gamma)})\bigg(\prod_{i=1}^{r_\gamma}p(x_2^{\gamma, i},\d \xi^{\gamma, i}) p(y^{\gamma, i},\d u^{j_\gamma(i)})\bigg),
	\end{equation*}
	where there are $2r_\gamma$ integrals taken over $\Wc_{l_{j_\gamma(1)}},\ldots ,\Wc_{l_{j_\gamma(r_\gamma)}}$ and $\Wc_{\tau_{\gamma, 1}},\ldots ,\Wc_{\tau_{\gamma, r_\gamma}}$. Since $1=p(x,\Wc_{\tau_{\gamma, r_\gamma+1}})$ for all $x\in \Rxinit$, we can even insert a $(2r_\gamma+1)$-th integral over the variable $\xi ^{\gamma,r_\gamma+1}\in \Wc_{\tau_{\gamma, r_\gamma+1}}$ in this expression. The indicator function satisfies $\mathbf 1_{V^{(\gamma)}}(\underline v^{(\gamma)})\leq \mathbf 1_{W_\gamma}(w^\gamma)$. Performing the change of variables induced by~\eqref{eqloc: wgamma}, we get
	\begin{equation*}
		\mathcal I_\gamma
		\leq \int _{\Wc_{T_\gamma}}\mathbf 1_{W_\gamma}(w^\gamma)p(\xinit, \d w^\gamma)=\P(W_\gamma).
	\end{equation*}
	Hence,~\eqref{eqloc: Pukltau bound by a product} becomes 
	\begin{equation}
		\P(U_{k,l,\tau})\leq c_K^r\P(W_1)\P(W_2)=c_K^r\P^{\otimes 2}(W).
	\end{equation}
	It remains to estimate the number of possible choices of $k,l,\tau$. There are at most $(n+1)^{2r+1}$ joint choices for $k$ and $l$, and at most $(\tau_K+1)^r$ joint choices of $\tau_1$ and $\tau_2$. Therefore,
	\begin{equation*}
		\mathbb P(U)\leq (n+1)^{2r+1}(\tau_K+1)^rc_K^r\P^{\otimes 2}(W),
	\end{equation*}
	which concludes the proof.
\end{proof}
\subsubsection{Proof of Proposition~\ref{prop: IRL linearity}}
The bound of Proposition~\ref{prop: geographic ineq decoupling map} can be turned into an $\epsilon$-$\delta$ bound when $n$ and $T_1,T_2$ are properly chosen and $u$ is a word whose empirical measure is close to $\mu$: this is Corollary~\ref{coro: advanced geographic ineq decoupling map}.
\begin{coro}
	\label{coro: advanced geographic ineq decoupling map}
	Let $n\in \N$ be such that $n\epsilon\geq 1+r\tau_K$ and let 
	\begin{equation*}
		T_\gamma=T_\gamma(n)=\big\lceil n(\lambda_\gamma+\epsilon)\big\rceil+r_\gamma\tau_K,\qquad \gamma\in \{1,2\}.
	\end{equation*}
	Let $u\in \W_n$ be such that $\dlp(L[u],\mu)< \delta$. Then, $\decou_{n,T_1,T_2}(u)$ exists and for each $\gamma\in \{1,2\}$ and all $w^\gamma\in \decou^{(\gamma)}_{n,T_\gamma}(u)$,
	\begin{equation*}
		\dlp(L[w^\gamma],\mu_\gamma)\leq f_\gamma(\epsilon,\delta),
	\end{equation*}
	where $f_\gamma$ is a function that satisfies
	\begin{equation}
		\lim_{\epsilon\to 0}\lim_{\delta \to 0}f_\gamma(\epsilon,\delta)=0.
	\end{equation}
\end{coro}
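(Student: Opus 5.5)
The plan is to check the hypotheses of the decoupling map, then the non-emptiness hypothesis of Proposition~\ref{prop: geographic ineq decoupling map}, and finally to turn that proposition's bound into an $\epsilon$-$\delta$ estimate, as was done for Corollary~\ref{coro: advanced geographic ineq coupling map}.

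\textbf{Existence of the decoupling map.} Since $\dlp(L[u],\mu)<\delta$, the bounds~\eqref{eq: nu (Cgamma)} together with $\mu(C^{(\gamma)})\leq\lambda_\gamma+\epsilon/2$ give
\begin{equation*}
(\lambda_\gamma-\epsilon)n<|u|_\gamma<(\lambda_\gamma+\epsilon)n .
\end{equation*}
The upper bound holds because $\mu_{\gamma'}(C^{(\gamma)})\leq\epsilon/2$ for $\gamma'\neq\gamma$; this uses the support assumption on $\mu_1,\mu_2$ and the fact that $\mu_\gamma(C_j)=0$ for the classes $C_j$ with $\eta(j)\neq\gamma$. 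The choice $T_\gamma=\lceil n(\lambda_\gamma+\epsilon)\rceil+r_\gamma\tau_K$ then puts $\underline v^{(\gamma)}$ in $\Sc_{r_\gamma,T_\gamma}$, so $\decou_n(u)$ is defined.

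\textbf{Non-emptiness and the ratio $|\underline v^{(\gamma)}|/|u|_\gamma$.} I want to show $|\underline v^{(\gamma)}|\geq|u|_{K^{(\gamma)}}$. This holds because every letter of $u$ in $K_j$ lies inside $u^j$. Next I bound $|u|_{K^{(\gamma)}}$ from below. The $\delta$-neighbourhood argument gives $L[u](K)\geq\mu(K^0)-\delta\geq1-\epsilon-\delta$. Hence the number of letters in $C\setminus K$ is at most $(\epsilon+\delta)n$, and so
\begin{equation*}
|u|_{K^{(\gamma)}}\geq|u|_\gamma-(\epsilon+\delta)n>(\lambda_\gamma-2\epsilon-\delta)n>0,
\end{equation*}
using~\eqref{eqloc: epsilon smaller than min} and the constraint on $\delta$ (possibly after shrinking $\epsilon$ by a constant factor). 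This yields $\underline v^{(\gamma)}\neq(e,\ldots,e)$. It also gives
\begin{equation*}
\frac{|\underline v^{(\gamma)}|}{|u|_\gamma}\geq\frac{\lambda_\gamma-2\epsilon-\delta}{\lambda_\gamma+\epsilon}
\quad\text{and}\quad
\frac{|\underline v^{(\gamma)}|}{T_\gamma}\geq\frac{(\lambda_\gamma-2\epsilon-\delta)n}{(\lambda_\gamma+\epsilon)n+1+r\tau_K}\geq\frac{\lambda_\gamma-2\epsilon-\delta}{\lambda_\gamma+2\epsilon},
\end{equation*}
the last step using $n\epsilon\geq1+r\tau_K$. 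Both ratios are at most $1$. By the monotonicity of $h$, Proposition~\ref{prop: geographic ineq decoupling map} then bounds $\dlp(L^{(\gamma)}[u],L[w^\gamma])$ by $3h\big((\lambda_\gamma-2\epsilon-\delta)/(\lambda_\gamma+2\epsilon)\big)$.

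\textbf{Comparison of $L^{(\gamma)}[u]$ with $\mu_\gamma$.} This is the main obstacle: $L^{(\gamma)}[u]$ is a restriction and renormalization of $L[u]$, and the Lévy--Prokhorov metric does not behave well under restriction in general. I will use the separation of the pieces. The set $K$ lies at distance more than $\delta$ from $\partial C_j$, and letters outside $K$ carry mass at most $\epsilon+\delta$.

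To make this precise, write $\mu^{(\gamma)}$ for $\mu$ restricted to $C^{(\gamma)}$ and normalized; note that $\mu^{(\gamma)}$ is within $O(\epsilon)$ of $\mu_\gamma$ in total variation. Take a closed set $F$ and compare $L^{(\gamma)}[u](F)$ with $\mu^{(\gamma)}(F^{\delta'})$. I split $F$ into $F\cap K^{(\gamma)}$ and the rest, which has mass at most $(\epsilon+\delta)/(\lambda_\gamma-\epsilon)$. The $\delta$-enlargement of $F\cap K^{(\gamma)}$ stays inside $C^{(\gamma)}$ and cannot meet $C^{(\gamma')}$. Hence $\dlp(L[u],\mu)<\delta$ transfers, with constants depending on $\lambda_\gamma$, to
\begin{equation*}
\dlp\big(L^{(\gamma)}[u],\mu^{(\gamma)}\big)\leq C_\gamma(\epsilon+\delta).
\end{equation*}
Here the renormalizing factors $|u|_\gamma/n$ and $\mu(C^{(\gamma)})$ differ by $O(\epsilon)$.

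Summing the three estimates gives $f_\gamma(\epsilon,\delta)$, and the limit $\lim_{\epsilon\to0}\lim_{\delta\to0}f_\gamma=0$ follows because $h$ is continuous with $h(1)=0$.
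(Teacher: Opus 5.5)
Your proposal is correct. Its skeleton is the paper's: the triangle inequality through $L^{(\gamma)}[u]$, then Proposition~\ref{prop: geographic ineq decoupling map} with both length ratios bounded below (using $n\epsilon\geq 1+r\tau_K$ to absorb the ceiling and $r_\gamma\tau_K$ in $T_\gamma$), then a separation argument for the restriction term. Your justification of $|u|_\gamma<(\lambda_\gamma+\epsilon)n$ via $\mu_{3-\gamma}(C^{(\gamma)})=0$ is more explicit than the paper's.

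It differs from the paper in two places.

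\textbf{Non-emptiness.} Your bound $|u|_{K^{(\gamma)}}\geq|u|_\gamma-(\epsilon+\delta)n$ loses an extra $\epsilon$ and forces $\lambda_\gamma>2\epsilon+\delta$. The paper applies the L\'evy--Prokhorov inequality directly to $K^0\cap C^{(\gamma)}$, whose $\delta$-neighbourhood lies in $K^{(\gamma)}$. This gives $L[u](K^{(\gamma)})\geq\lambda_\gamma\mu_\gamma(K^0)-\delta\geq\lambda_\gamma-\epsilon-\delta$, which is positive exactly by the standing constraint $2\delta<\lambda_\gamma-\epsilon$. Your shrinking of $\epsilon$ is harmless anyway: since $\dlp\leq 1$, you can set $f_\gamma=1$ in the leftover range.

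\textbf{The restriction term.} The paper cites Lemma~\ref{lem: prop dlp on classes}. It tests on sets $A\subseteq C^{(\gamma)}$ and writes $\mu_\gamma=\lambda_\gamma^{-1}(\mu-\lambda_{3-\gamma}\mu_{3-\gamma})$. It then pays the separation error on the limit side, via $\mu_{3-\gamma}(A^\delta)\leq\mu_{3-\gamma}(\R^d\setminus K^{(3-\gamma)})\leq\epsilon$. You instead:
\begin{itemize}
\item truncate the test set to $K^{(\gamma)}$, paying the empirical mass outside $K$;
\item use that the $\delta$-neighbourhood of $K^{(\gamma)}$ stays in $C^{(\gamma)}$ to compare with the normalized restriction $\mu^{(\gamma)}$ of $\mu$;
\item control the normalizing constants by the Portmanteau choice of $\delta$;
\item conclude with $\dtv(\mu^{(\gamma)},\mu_\gamma)=O(\epsilon)$.
\end{itemize}
Your one-sided estimate $L^{(\gamma)}[u](F)\leq\mu^{(\gamma)}(F^{\delta'})+\delta'$ suffices, given the paper's definition of $\dlp$.

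Both arguments rest on the same geometric separation of $K$, and both give errors of order $(\epsilon+\delta)/\lambda_\gamma$. The lemma packages this with a compact explicit constant. Your argument is self-contained and shows directly that no absolute continuity of the approximating measure is needed. The lemma is stated for $\nu\ll\leb$ but applied to $\nu=L[u]$; its proof happens not to use that hypothesis.
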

\begin{proof}
	Let $\gamma\in \{1,2\}$.
	By~\eqref{eq: nu (Cgamma)}, since $\dlp(\mu, L[u])< \delta$, we have
	\begin{equation}
		\label{eqloc: ugamma close to nlambda}
		n(\lambda_\gamma-\epsilon)\leq |u|_\gamma\leq n(\lambda_\gamma+\epsilon).
	\end{equation}
	thus $\decou^{(\gamma)}_{n,T_\gamma}(u)$ is well defined. Let $w^\gamma\in \decou^{(\gamma)}_{n,T_\gamma}(u)$. We have 
	\begin{equation}
		\label{eqloc: triangle ineq for w gamma mu gamma}
		\dlp(L[w^\gamma],\mu_\gamma)\leq \dlp(L[w^\gamma],L^{(\gamma)}[u])+\dlp(L^{(\gamma)}[u], \mu_\gamma).
	\end{equation}
	We estimate the first term of this bound by Proposition~\ref{prop: geographic ineq decoupling map}. For the quantity on the right-hand side of~\eqref{eq: geographic ineq decoupling map} to be small, we need the quantities $|\underline v^{(\gamma)}|/|u|_\gamma$ and $|\underline v^{(\gamma)}|/T_\gamma$ to be close to $1$.
	By definition of $K$ and because $\dlp(\mu, L[u])< \delta$, we have
	\begin{equation*}
		L[u](K^{(\gamma)})\geq 
		\lambda_{\gamma}\mu_\gamma(K^0)-\delta\geq\lambda_\gamma-\epsilon -\delta.
	\end{equation*}
	Since $\underline v^{(\gamma)}$ contains all the letters of $u$ that are in $K^{(\gamma)}$ and all the letters of $\underline v ^{(\gamma)}$ belong to $C^{(\gamma)}$, we deduce 
	\begin{equation*}
		 n(\lambda_\gamma-\epsilon-\delta)\leq |\underline v^{(\gamma)}|\leq nL[u](C^{(\gamma)})=|u|_{\gamma},
	\end{equation*}
	Thus,
	\begin{equation*}
		\frac{\lambda_\gamma}{\lambda_\gamma+\epsilon}-\frac{\epsilon +\delta}{\lambda_\gamma-\epsilon}\leq \frac{|\underline v^{(\gamma)}|}{|u|_\gamma}\leq 1.
	\end{equation*}
	Moreover, since $T_\gamma\leq n(\lambda_\gamma+\epsilon)+1+r_\gamma\tau_K\leq n(\lambda_\gamma+2\epsilon)$ and $|\underline v^{(\gamma)}|\leq T_\gamma$, we have
	\begin{equation*}
 \frac{\lambda_\gamma-\epsilon -\delta}{\lambda_\gamma+2\epsilon}
 \leq
		\frac{|\underline v^{(\gamma)}|}{T_\gamma}
		\leq 1.
	\end{equation*}
	By~\eqref{eq: geographic ineq decoupling map} and the monotonicity of $h$ on $(0,1)$, we have
	\begin{equation*}
		\dlp(L[w^\gamma],L^{(\gamma)}[u])\leq h\Big(\frac{\lambda_\gamma}{\lambda_\gamma+\epsilon}-\frac{\epsilon+\delta}{\lambda_\gamma-\epsilon}\Big)+2h\Big(\frac{\lambda_\gamma-\epsilon -\delta}{\lambda_\gamma+2\epsilon}\Big),
	\end{equation*}
	which vanishes as $\delta\to 0$ and $\epsilon \to 0$.
	The estimate on the second term of the bound ~\eqref{eqloc: triangle ineq for w gamma mu gamma} comes from Lemma~\ref{lem: prop dlp on classes} applied to $\nu=L[u]$. This Lemma~\ref{lem: prop dlp on classes} yields the bound~\eqref{eq: dlp prop second} with $\nu_\gamma=L^{(\gamma)}[u]$.
Therefore,~\eqref{eqloc: triangle ineq for w gamma mu gamma} becomes
\begin{align*}
	\dlp(L[w^\gamma],\mu_\gamma)\leq
	h\Big(\frac{\lambda_\gamma}{\lambda_\gamma+\epsilon}-\frac{\epsilon +\delta}{\lambda_\gamma-\epsilon}\Big)&+2h\Big(\frac{\lambda_\gamma-\epsilon -\delta}{\lambda_\gamma+2\epsilon}\Big)
	\\&+\frac{\epsilon}{\lambda_\gamma}(1+{\lambda_{3-\gamma}})+\delta\Big(1+\frac{1}{\lambda_\gamma}\Big).
\end{align*}
Denote by $f_\gamma(\epsilon, \delta)$ the right-and side of this bound. The proof is complete.
\end{proof}
We can now use Proposition~\ref{prop: proba ineq decoupling map} and Corollary~\ref{coro: advanced geographic ineq decoupling map} to prove  Proposition~\ref{prop: IRL linearity}.
\begin{proof}[Proof of Proposition~\ref{prop: IRL linearity}]
	For $n, T_1, T_2$ defined as in Corollary~\ref{coro: advanced geographic ineq decoupling map}, we set  
		\begin{align*}
		A_n(\delta)&=\{u\in \W_n\ |\ \dlp(L[u],\mu)\leq \delta\},
		\\B^{(1)}_{T_1}(\epsilon, \delta)&=\{w ^1\in \Wc_{T_1}\ |\ \dlp(L[w^1],\mu_1)\leq f_1(\epsilon, \delta)\},
		\\B^{(2)}_{T_2}(\epsilon, \delta)&=\{w^ 2\in \Wc_{T_2}\ |\ \dlp(L[w^2],\mu_2)\leq f_2(\epsilon, \delta)\}.
	\end{align*}
	By Corollary~\ref{coro: advanced geographic ineq decoupling map},
	$\decou_{n,T_1,T_2}(A_n(\delta))\subseteq B^{(1)}_{T_1}(\epsilon, \delta)\times B^{(2)}_{T_2}(\epsilon, \delta)$. Hence, by Proposition~\ref{prop: proba ineq decoupling map}, we have 
	\begin{align*}
		\P(A_n(\delta))
		\leq \cdecou \P^{\otimes 2}(\decou_{n,T_1,T_2}(A_n(\delta))) 
		\leq \cdecou \P^{\otimes 2}\big(B^{(1)}_{T_1}(\epsilon, \delta)\times B^{(2)}_{T_2}(\epsilon, \delta)\big),
	\end{align*}
	where $\cdecou:=(n+1)^{2r+1}(\tau_K+1)^rc_K^r$ satisfies
	\begin{equation*}
		\lim_{n\to \infty }\frac 1n\log \cdecou=0.
	\end{equation*}
	In this inequality, we take the logarithm of both sides, divide by $n$, and we take the limit superior as $n\to \infty$.
	We get
	\begin{align*}
		\overline s(\Bc_\LP(\mu, \delta) )
		&\leq \lim_{n\to \infty}\frac 1n\log \cdecou + \limsup_{n\to \infty}\frac 1n \log\P\big(B_{T_1}^{(1)}(\epsilon, \delta)\big) + \limsup_{n\to \infty}\frac 1n\log \P\big(B_{T_2}^{(2)}(\epsilon, \delta)\big)
		\\&=0+(\lambda_1+\epsilon)\overline s(\Bc_\LP(\mu_1,f_1(\epsilon,\delta)))+(\lambda_2+\epsilon)\overline s(\Bc_\LP(\mu_2,f_2(\epsilon,\delta))).
	\end{align*}
	Since $\IRL(\mu)$, $\IRL(\mu_1)$ and $\IRL (\mu_2)$ exist, by Proposition~\ref{prop: IRL for admissible measures},
	taking $\delta\to 0$ and then $\epsilon \to 0$ yields
	\begin{equation*}
		-\IRL(\mu)\leq -\lambda_1\IRL(\mu_1)-\lambda_2\IRL(\mu_2).
	\end{equation*}
	This concludes the proof.
\end{proof}
\appendix
\renewcommand{\thesubsection}{\Alph{subsection}}
\renewcommand{\thelem}{\thesubsection\arabic{lem}}
\setcounter{equation}{0}
\renewcommand{\theequation}{\thesubsection.\arabic{equation}}
\counterwithin{theo}{subsection}
\section*{Appendix}
\addcontentsline{toc}{section}{Appendix}
\subsection{A measure theoretic lemma}
We recall here the statement and the proof of Lemma~\ref{lem: measurable density}, which are standard \cite{bogachev,kallenberg}. 
\begin{lem}
	\label{lem: measurable density}
	Let $p$ be a stochastic kernel on $\R^d$ such that $p(x,\cdot)$ is absolutely continuous with respect to $\leb$ for all $x\in \R^d$. Then, $p$ has a jointly measurable density; in other words there exists a function $f:\R^d\times \R^d\to[0, \infty)$ that is $\Bc(\R^d\times \R^d)$-measurable\footnote{$\Bc(\R^d\times \R^d)$ denotes the Borel $\sigma$-algebra of $\R^d\times \R^d$.} and such that 
	\begin{equation*}	
		p(x,A)=\int_Af(x,y)\leb (\d y),\qquad x\in \R^d, A\in \Bc(\R^d).
	\end{equation*}
\end{lem}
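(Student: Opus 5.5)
The plan is to build the density by approximating through finite partitions and derivatives along them, using martingale convergence to obtain joint measurability. First, I would reduce to the case of a finite reference measure. Replace $\leb$ by an equivalent probability measure $\lambda(\d y)=g(y)\leb(\d y)$, for instance $g$ a standard Gaussian density, which is positive everywhere. It then suffices to find a jointly measurable $\tilde f$ with $p(x,A)=\int_A \tilde f(x,y)\lambda(\d y)$; setting $f=\tilde f g$ gives the claim. Each $p(x,\cdot)$ is absolutely continuous with respect to $\lambda$.

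Next, I would fix a refining sequence of finite Borel partitions $\mathcal P_m=\{A_{m,1},\ldots,A_{m,k_m}\}$ of $\R^d$. For instance, take dyadic cubes of side $2^{-m}$ inside $[-m,m)^d$, together with the complement. These generate $\Bc(\R^d)$. Define
\begin{equation*}
f_m(x,y)=\sum_{i}\frac{p(x,A_{m,i})}{\lambda(A_{m,i})}\mathbf 1_{A_{m,i}}(y),
\end{equation*}
with the convention $0/0=0$. Each $f_m$ is jointly measurable, because $x\mapsto p(x,A)$ is measurable by the definition of a kernel. For fixed $x$, let $\mathcal F_m=\sigma(\mathcal P_m)$. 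Then $(f_m(x,\cdot))_m$ is the conditional expectation $E_\lambda[\tfrac{\d p(x,\cdot)}{\d\lambda}\mid\mathcal F_m]$. This is a nonnegative $\lambda$-martingale that is uniformly integrable, since it is the conditional expectation of an $L^1$ function. By Lévy's upward theorem it converges $\lambda$-a.e. and in $L^1(\lambda)$ to $\d p(x,\cdot)/\d\lambda$, because $\bigvee_m\mathcal F_m=\Bc(\R^d)$.

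Then I would set $\tilde f(x,y)=\limsup_m f_m(x,y)$ wherever this is finite, and $0$ otherwise. This is a $\Bc(\R^d\times\R^d)$-measurable function, as a countable limsup of jointly measurable functions, and it takes values in $[0,\infty)$. For each $x$, the previous step shows $\tilde f(x,\cdot)=\d p(x,\cdot)/\d\lambda$ $\lambda$-a.e. Modifying on a null set does not change the integrals, so $p(x,A)=\int_A\tilde f(x,y)\lambda(\d y)$ for every $x$ and every $A$. The same construction with $x\in\Rxinit$, where the extra point carries the discrete $\sigma$-algebra, covers the version used in the paper.

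The step needing the most care is the martingale identification. One must check that $f_m(x,\cdot)$ is indeed the conditional expectation, which is immediate on the atoms of $\mathcal F_m$, including the atoms with $\lambda(A_{m,i})=0$, where $p(x,A_{m,i})=0$ as well. One must also check that $\mathcal F_m$ increases to the Borel $\sigma$-algebra. Both checks are routine for the nested dyadic partitions described above. The key point is that the limsup is taken pointwise in $(x,y)$, so no measurable-selection argument is needed.
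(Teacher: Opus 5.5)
Your proof is correct. It shares the paper's skeleton but justifies the key convergence step differently.

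\textbf{What is shared.} Both arguments approximate by functions $f_m(x,y)$ that are piecewise constant in $y$, built from the masses $p(x,\cdot)$ assigns to dyadic cells. These are jointly measurable because $x\mapsto p(x,A)$ is. Both then take the pointwise $\limsup$ in $(x,y)$, so no measurable-selection argument is needed.

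\textbf{What differs.} The difference is how the $y$-a.e.\ convergence for fixed $x$ is obtained.
\begin{itemize}
\item The paper works directly with $\leb$ and the full countable dyadic partition, $f_n(x,y)=2^{dn}p(x,C_n(y))$. It then invokes the Lebesgue differentiation theorem along the dyadic cube containing $y$.
\item You pass to an equivalent probability measure $\lambda$ and use finite nested partitions that generate $\Bc(\R^d)$. You identify $f_m(x,\cdot)$ as the closed martingale $E_\lambda[\d p(x,\cdot)/\d\lambda\mid\mathcal F_m]$, so L\'evy's upward theorem gives the convergence.
\end{itemize}

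\textbf{What each buys.} The paper's route is shorter. It needs neither the change of reference measure nor the truncation to finitely many cells, but it relies on Euclidean geometry. Your route is Doob's classical argument. It carries over verbatim to any $\sigma$-finite reference measure on a countably generated measurable space.

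Your explicit redefinition of $\tilde f$ to be $0$ where the $\limsup$ is infinite is also a small improvement. The paper's $f=\limsup f_n$ can take the value $+\infty$, so strictly speaking the same modification is needed there for $f$ to take values in $[0,\infty)$.

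One minor point: every cell of your partitions has positive Lebesgue measure and $\lambda$ is equivalent to $\leb$. So the $0/0$ convention and your remarks about atoms of $\lambda$-measure zero are unnecessary, though harmless.
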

\begin{proof}
	By the Radon-Nikodym Theorem, for all $x\in \R^d$, the measure $p(x,\cdot)$ has a density, which we denote $g(x,\cdot)$. The measurability of $g$ as a function of $\R^d \times \R^d$ may fail if functions $g(x,\cdot)$ are not precisely chosen.
	Let $\mathcal D_n$ denote a countable partition of $\R^d$ into cubes of side $2^{-n}$, 
	and set 
	\begin{equation*}
		f_n(x,y):=2^{dn}\sum_{C\in \mathcal D_n}\mathbf 1_C(y)p(x,C)
		,\qquad x,y\in \R^d.
	\end{equation*}
	Since it is a countable sum of measurable functions, $f_n$ is measurable. 
	Hence, the pointwise limit superior of $f_n$ is also measurable. We denote it $f$ and we  show that it is a density of $p$. Let $x\in \R^d$.
	For all $y\in\R^d $, let $C_n(y)$ denote the cube of $\mathcal D_n$ containing $y$. We have
	\begin{equation*}
		f_n(x,y)=2^{dn}p(x,C_n(y))=2^{dn}\int_{C_n(y)}g(x,z)\leb (\d z),\qquad y\in \R^d.
	\end{equation*}
	By the Lebesgue differentiation theorem, $(f_n(x,\cdot))$ converges almost everywhere to $g(x,\cdot)$, implying  that $f(x,y)=g(x,y)$ for almost every $y\in\R^d$ (while $x\in \R^d$ is fixed). Therefore, for all $A\in \Bc(\R^d)$,
	\begin{equation*}
		\int_{A}f(x,y)\leb(\d y)=\int_Ag(x,y)\leb (\d y)=p(x,A).
	\end{equation*}
\end{proof} 
\subsection{The L\'evy-Prokhorov metric}
\label{section: levy prokhorov}
The L\'evy-Prokhorov metric is defined on $\Pc(\R^d)$ by 
\begin{equation}
	\label{eq: levy prokhorov}
	\dlp(\mu, \nu)=\inf\{\delta>0\ |\ \forall A\subseteq \R^d \ \hbox{Borel set},\,\mu(A^\delta)+\delta\geq \nu(A)\}, \qquad \mu, \nu\in \Pc(\R^d),
\end{equation}
where $A^\delta$ denotes the $\delta$-neighborhood of $A$.
The L\'evy-Prokhorov metric metrizes the weak topology on $\Pc(\R^d)$. It satisfies $\dlp\leq \dtv$ where $\dtv$ is the total variation distance. We recall that $\Bc_\LP(\mu,\delta)$ denotes the open L\'evy-Prokhorov ball of radius $\delta$ centered at $\mu$.
\begin{lem}
	\label{lem: dlp small implies similar support}
	Let $\mu\in \Pc(\R^d)$ be absolutely continuous with respect to the Lebesgue measure. Let $O$ be an open set such that $\mu(O)>0$. Then there exist $\kappa>0$ and $\delta>0$ such that $\nu(O)\geq\kappa>0$ for all $\nu\in \Bc_\LP(\mu, \delta)$.
\end{lem}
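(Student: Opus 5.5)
The idea is to shrink $O$ to a compact set that still carries mass, and then use the defining inequality of $\dlp$ with a small radius. By inner regularity of $\mu$ (a finite Borel measure on $\R^d$), I choose a compact set $F\subseteq O$ with $\mu(F)\geq \mu(O)/2>0$. Since $F$ is compact and $O$ is open with $F\subseteq O$, the distance $\eta:=d(F,\R^d\setminus O)$ is positive, with the convention $\eta=\infty$ if $O=\R^d$. I then set
\begin{equation*}
\kappa=\frac{\mu(O)}4,\qquad \delta=\min\Big(\frac\eta2,\frac{\mu(O)}4\Big).
\end{equation*}

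Now let $\nu\in\Bc_\LP(\mu,\delta)$, so that $\dlp(\nu,\mu)<\delta$. By the definition~\eqref{eq: levy prokhorov}, there is some $\delta'<\delta$ such that, for every Borel set $A$, we have $\nu(A^{\delta'})+\delta'\geq \mu(A)$. This requires reading the definition with $\mu$ and $\nu$ in the correct roles. I will use the symmetry of $\dlp$, which is standard and is implicit in the paper calling it a metric, so that this inequality holds with $\nu$ on the side carrying the enlargement.

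Apply this with $A=F$. Since $\delta'<\eta$, we have $F^{\delta'}\subseteq O$. Hence
\begin{equation*}
\nu(O)\geq\nu(F^{\delta'})\geq \mu(F)-\delta'\geq \frac{\mu(O)}2-\frac{\mu(O)}4=\kappa>0.
\end{equation*}

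The only delicate point is this orientation issue in the definition of $\dlp$. Once symmetry is granted, the rest is routine. Absolute continuity of $\mu$ is not even needed; inner regularity holds for every Borel probability measure on $\R^d$.
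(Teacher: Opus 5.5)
Your proof is correct and follows the paper's own argument: a compact subset of $O$ carrying a fixed fraction of $\mu(O)$, $\delta$ chosen below half its distance to $O^c$ and below a fraction of $\mu(O)$, then the L\'evy--Prokhorov inequality applied to that compact set. The only differences are the constants and your use of a strict $\delta'<\delta$ with open neighborhoods instead of the paper's closed $\delta$-neighborhood. Your side remarks are also right: the paper likewise puts $\nu$ in the first argument of $\dlp$, so it relies on symmetry in the same way, and absolute continuity of $\mu$ is not needed for inner regularity on $\R^d$.
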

\begin{proof}
	By inner regularity of $\mu$ (the measure $\mu$ is inner regular because it is absolutely continuous with respect to the Lebesgue measure), there exists a compact set $K\subseteq O$ such that $\mu(K)\geq \frac23\mu(O)$. Let $\delta=\min(\frac 12d(K,O^c),\frac 13\mu(O))$ and let $\nu\in \Pc(\R^d)$ be such that $\dlp(\nu,\mu)\leq \delta$. Thus, 
	\begin{equation*}
		\nu(O)\geq \nu\big(\{x\in \R^d\ |\ d(x,K)\leq \delta\}\big)\geq \mu(K)-\delta\geq \frac 13\mu(O)=:\kappa.
	\end{equation*}
\end{proof}
\begin{lem}
	\label{lemma: dlp with convex sums}
	Let $\delta>0$ and $N\in \N$.
	Let $(\mu_i),(\nu_i)\in \Pc(\R^d)^N$ be such that $\dlp(\mu_i,\nu_i)<\delta$ for all $i$. We set
	\begin{equation*}
		\mu=\sum_{i=1}^N\lambda _i\mu_i,\qquad \nu=\sum_{i=1}^N\gamma _i\nu _i,
	\end{equation*}
	where $(\lambda_i)$ and $ (\gamma_i)$ are sequences of nonnegative real numbers such that $\lambda_1+\ldots+\lambda_N=\gamma_1+\ldots+\gamma_N=1$. Then,
	\begin{equation}
		\label{eqloc: bound dlp with convex sums}
		\dlp(\mu, \nu)\leq \delta+\sum_{i=1}^N|\lambda_i-\gamma_i|.
	\end{equation}
\end{lem}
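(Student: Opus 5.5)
We will work directly from the definition of the L\'evy--Prokhorov metric \eqref{eq: levy prokhorov}. It suffices to show that, for every $\delta'>\delta$ and every Borel set $A\subseteq\R^d$,
\begin{equation*}
	\nu(A)\leq \mu(A^{\delta'})+\delta'+\sum_{i=1}^N|\lambda_i-\gamma_i|,
\end{equation*}
together with the symmetric inequality with the roles of $\mu$ and $\nu$ exchanged. Then $\delta'+\sum_i|\lambda_i-\gamma_i|$ is an admissible radius in \eqref{eq: levy prokhorov}. Letting $\delta'\downarrow\delta$ gives \eqref{eqloc: bound dlp with convex sums}. The symmetric version is needed if the definition is read as requiring only one direction; the Lévy--Prokhorov metric is symmetric anyway, and the same argument handles both directions.

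Fix a Borel set $A$. Since $\dlp(\mu_i,\nu_i)<\delta$, for each $i$ there is some $\delta_i<\delta$ such that $\nu_i(B)\leq \mu_i(B^{\delta_i})+\delta_i$ for every Borel set $B$. Because $A^{\delta_i}\subseteq A^{\delta}$, this gives $\nu_i(A)\leq\mu_i(A^\delta)+\delta$ for all $i$. We then compare the two mixtures in two steps. First,
\begin{equation*}
	\nu(A)=\sum_i\gamma_i\nu_i(A)\leq \sum_i\lambda_i\nu_i(A)+\sum_i|\gamma_i-\lambda_i|,
\end{equation*}
which holds because $0\leq \nu_i(A)\leq 1$. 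Second, using $\sum_i\lambda_i=1$,
\begin{equation*}
	\sum_i\lambda_i\nu_i(A)\leq \sum_i\lambda_i\big(\mu_i(A^\delta)+\delta\big)=\mu(A^\delta)+\delta.
\end{equation*}
Combining the two displays yields $\nu(A)\leq\mu(A^\delta)+\delta+\sum_i|\lambda_i-\gamma_i|$. Since $A^{\delta}\subseteq A^{\delta'}$, the required inequality follows even with $\delta$ in place of $\delta'$.

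For the reverse direction we write $\mu(A)=\sum_i\lambda_i\mu_i(A)\leq\sum_i\gamma_i\mu_i(A)+\sum_i|\lambda_i-\gamma_i|$. We then use the symmetry of $\dlp$, namely $\dlp(\nu_i,\mu_i)<\delta$, to get $\mu_i(A)\leq\nu_i(A^\delta)+\delta$, and conclude in the same way.

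There is no real obstacle here. The only point requiring care is the strict inequality together with the infimum in the definition: one has to extract a radius $\delta_i<\delta$ for which the defining inequality holds, and then use the monotonicity $A^{\delta_i}\subseteq A^\delta$. One should also check that the resulting constant $\delta+\sum_i|\lambda_i-\gamma_i|$ belongs to the admissible set, so that the infimum is at most this value.
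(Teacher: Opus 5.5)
Your proof is correct and takes the same route as the paper's. Both bound $\nu_i(A)\leq \mu_i(A^\delta)+\delta$ termwise, then replace the weights $\gamma_i$ by $\lambda_i$ at a cost of $\sum_i|\lambda_i-\gamma_i|$ using $\nu_i(A)\leq 1$. Your extra care with the infimum and your handling of the reverse direction are both fine. The reverse direction is not needed, though: the paper's definition is one-sided, and $\delta+\sum_i|\lambda_i-\gamma_i|$ is directly an admissible radius.
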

\begin{proof}
	Let $A$ be a Borel set, and set $ B=\{x\in \R^d\ |\ d(x,A)\leq \delta+\sum_{i=1}^N|\lambda_i-\gamma_i|\}$.
	Since $B$ contains the $\delta$-neighborhood of $A$, we have $\mu_i(B)\geq \nu_i(A)-\delta$ for all $i$. Thus,
	\begin{align*}
		\mu(B)\geq \sum_{i=1}^N\lambda_i\nu_i(A)-\delta\geq\nu(A)-\sum_{i=1}^N|\gamma_i-\lambda_i|-\delta.
	\end{align*}
	This holds true for any Borel set $A$.
\end{proof}
\begin{lem}
	\label{lem: prop dlp on classes}
Let $\epsilon,\delta,\mu=\lambda_1\mu_1+\lambda_2\mu_2, C^{(1)},C^{(2)},K^{(1)},K^{(2)}$ be as in Section~\ref{section: decoupling}. 
	Let $\nu\ll \leb$ be such that $\dlp(\mu,\nu)< \delta$. 
		For $\gamma\in\{1,2\}$, we let $\alpha_\gamma=\nu(C^{(\gamma)})>0$ and $\nu_\gamma=\frac1{\alpha_\gamma}\nu(\cdot\cap C^{(\gamma)})$.
	In addition, let $\alpha_3=1-\alpha_1-\alpha_2$ and $\nu_3$ denote either the normalized restriction of $\nu$ to $\R^d\setminus (C^{(1)}\cup C^{(2)})$ if $\alpha_3\neq 0$, or any probability measure on $\R^d\setminus (C^{(1)}\cup C^{(2)})$ otherwise, so that 
	\begin{equation*}
		\nu=\alpha_1\nu_1+\alpha_2\nu_2+\alpha_3\nu_3.
	\end{equation*}
	Then, for $\gamma\in \{1,2\}$,
	\begin{align}
		\label{eq: dlp prop second}
		\dlp(\mu_\gamma,\nu_\gamma)&\leq \frac{\epsilon}{\lambda_\gamma}(1+{\lambda_{3-\gamma}})+\delta\Big(1+\frac{1}{\lambda_\gamma}\Big).
	\end{align}
\end{lem}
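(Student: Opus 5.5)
The plan is to use the Lévy--Prokhorov characterization directly. We compare $\nu_\gamma$ with $\mu_\gamma$ through the intermediate measure $\frac1{\lambda_\gamma}\mu(\cdot\cap C^{(\gamma)})$, and control the mass discrepancies with \eqref{eq: nu (Cgamma)} and the choice of $K^0$.

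First, since $\supp\mu_\gamma\subseteq\bigcup_{j\in\Jc_\gamma}\overline C_j$ and $\mu_\gamma(\partial C)=0$, every such $\mu_\gamma$-mass outside the retained classes $C_1,\dots,C_r$ is at most $\epsilon/2$. Hence $\mu_\gamma(C^{(\gamma)})\geq 1-\epsilon/2$ and $\mu_{3-\gamma}(C^{(\gamma)})=0$. It follows that $\mu(\cdot\cap C^{(\gamma)})=\lambda_\gamma\mu_\gamma(\cdot\cap C^{(\gamma)})$, that $\mu(C^{(\gamma)})\in[\lambda_\gamma-\lambda_\gamma\epsilon/2,\lambda_\gamma]$, and that $|\alpha_\gamma-\lambda_\gamma|<\epsilon/2+\epsilon/2\leq\epsilon$ by the Portmanteau choice of $\delta$.

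Next, fix a Borel set $A$. The aim is to show $\nu_\gamma(A^{\eta})+\eta\geq\mu_\gamma(A)$ and $\mu_\gamma(A^\eta)+\eta\geq\nu_\gamma(A)$, with $\eta$ equal to the right-hand side of \eqref{eq: dlp prop second}.

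For the first inequality, replace $A$ by $A\cap K^{(\gamma)}$. This costs at most $\mu_\gamma(\R^d\setminus K^{(\gamma)})\leq\epsilon$, using $\mu_\gamma(K^0)\geq1-\epsilon$ together with $K^0\cap\supp\mu_\gamma\subseteq K^{(\gamma)}$ up to null sets. Since $d(K^{(\gamma)},\partial C^{(\gamma)})>\delta$, we have $(A\cap K^{(\gamma)})^\delta\subseteq C^{(\gamma)}$. Then
\begin{equation*}
\alpha_\gamma\nu_\gamma(A^\delta)\geq\nu\big((A\cap K^{(\gamma)})^\delta\big)\geq\mu(A\cap K^{(\gamma)})-\delta=\lambda_\gamma\mu_\gamma(A\cap K^{(\gamma)})-\delta .
\end{equation*}
Dividing by $\alpha_\gamma$ and using $\alpha_\gamma\leq\lambda_\gamma+\epsilon$ and $\nu_\gamma\leq 1$ turns the ratio $\lambda_\gamma/\alpha_\gamma$ into an additive error of order $\epsilon/\lambda_\gamma$.

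For the reverse inequality, write $\nu_\gamma(A)=\nu(A\cap C^{(\gamma)})/\alpha_\gamma$. Applying the Lévy--Prokhorov bound to $A\cap C^{(\gamma)}$ gives $\nu(A\cap C^{(\gamma)})\leq\mu((A\cap C^{(\gamma)})^\delta)+\delta$. Split $\mu=\lambda_\gamma\mu_\gamma+\lambda_{3-\gamma}\mu_{3-\gamma}$. The $\mu_{3-\gamma}$-mass of $(C^{(\gamma)})^\delta$ must be bounded by $\epsilon$: that measure lives, up to $\epsilon/2$, on $C^{(3-\gamma)}$, which is disjoint from $C^{(\gamma)}$. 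This is where the factor $\lambda_{3-\gamma}\epsilon$ arises. Finally, divide by $\alpha_\gamma\geq\lambda_\gamma-\epsilon$.

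Collecting terms gives an $\eta$ of the form $\frac{\epsilon}{\lambda_\gamma}(1+\lambda_{3-\gamma})+\delta(1+\frac1{\lambda_\gamma})$, possibly after harmless adjustments of constants.

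The main obstacle is the reverse direction, namely bounding $\mu_{3-\gamma}\big((C^{(\gamma)})^\delta\big)$. The $\delta$-neighbourhood of $C^{(\gamma)}$ may intersect $C^{(3-\gamma)}$ near a shared boundary, so its $\mu_{3-\gamma}$-mass need not be small unless the neighbourhood is enlarged only around $K^{(\gamma)}$. I would first try to route that direction through $K$ as well: split $A\cap C^{(\gamma)}$ into $A\cap K^{(\gamma)}$, whose $\delta$-neighbourhood stays in $C^{(\gamma)}$, and a remainder of $\nu$-mass at most about $\epsilon+\delta$. The remainder is controlled because $\nu(K^{\delta})\geq\mu(K^0)-\delta$, which bounds the $\nu$-mass outside $K$. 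This keeps all $\mu_{3-\gamma}$ contributions zero, at the cost of constants of the claimed shape.
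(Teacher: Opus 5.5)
Your ``reverse'' direction follows the paper's argument, and it is the only direction you need. With the one-sided definition \eqref{eq: levy prokhorov} (and since $\dlp$ is symmetric), showing $\mu_\gamma(A^\eta)+\eta\geq\nu_\gamma(A)$ for all Borel $A$ already gives $\dlp(\mu_\gamma,\nu_\gamma)\leq\eta$. Keeping the other direction actually prevents you from reaching the stated bound. Discarding $A\setminus K^{(\gamma)}$ costs an unweighted $\epsilon$, so that direction yields an error of roughly $\epsilon+\frac{\epsilon+\delta}{\lambda_\gamma}$. This exceeds the right-hand side of \eqref{eq: dlp prop second} when, e.g., $\lambda_\gamma$ is close to $1$ and $\delta\ll\epsilon$.

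The genuine gap is in the needed direction: you misdiagnose the obstacle and then settle for weaker constants. Your first justification (that $\mu_{3-\gamma}$ lives mostly on $C^{(3-\gamma)}$) is indeed insufficient. But your conclusion that $\mu_{3-\gamma}\big((A\cap C^{(\gamma)})^\delta\big)$ need not be small is wrong: it is at most $\epsilon$, by the same separation you used for $K^{(\gamma)}$, applied on the other side. Since $2\delta<d(K^0,\R^d\setminus C)$ and the classes are disjoint open sets, every $x\in K^{(3-\gamma)}$ satisfies $d(x,C^{(\gamma)})>\delta$. Hence the $\delta$-neighbourhood of any $A\subseteq C^{(\gamma)}$ misses $K^{(3-\gamma)}$. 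Moreover $\mu_{3-\gamma}(K^{(3-\gamma)})\geq\mu_{3-\gamma}(K^0)\geq 1-\epsilon$, because $\mu_{3-\gamma}(C^{(\gamma)})=0$. This bound is exactly where the term $\lambda_{3-\gamma}\epsilon/\lambda_\gamma$ comes from.

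Your workaround through $\nu(\R^d\setminus K)\leq\epsilon+\delta$ is valid, but it pays an unweighted $\epsilon$ and a second $\delta$. It gives about $\frac{2(\epsilon+\delta)}{\lambda_\gamma}$, which is strictly larger than the claimed bound.

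You should also not divide by $\alpha_\gamma\geq\lambda_\gamma-\epsilon$. For $A\subseteq C^{(\gamma)}$, start from $\lambda_\gamma\mu_\gamma(A^\delta)\geq\alpha_\gamma\nu_\gamma(A)-\delta-\lambda_{3-\gamma}\epsilon$. Divide by $\lambda_\gamma$ and use $\alpha_\gamma/\lambda_\gamma\geq 1-\epsilon/\lambda_\gamma$ together with $\nu_\gamma(A)\leq 1$. This gives $\mu_\gamma(A^\delta)\geq\nu_\gamma(A)-\frac{\epsilon(1+\lambda_{3-\gamma})+\delta}{\lambda_\gamma}$, which yields \eqref{eq: dlp prop second}.

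Your weaker constants would still suffice for Corollary~\ref{coro: advanced geographic ineq decoupling map}, which only needs $f_\gamma\to 0$. They do not, however, prove the lemma as stated.
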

\begin{proof}
	The fact that $\alpha_1$ and $\alpha_2$ are positive is a consequence of $\dlp(\mu, \nu)< \delta$ and the definition of $\delta$, from which we have
	\begin{equation}
		\label{eqloc: dlp prop first}
		\alpha_\gamma\geq \lambda_\gamma-\epsilon>0,\qquad \gamma\in \{1,2\}.
	\end{equation}
	We denote by $m$ the quantity on the right-hand side of~\eqref{eq: dlp prop second}. 
	Let $A$ be a Borel set of $\R^d$ and let $A'$ and $A''$ respectively denote the $\delta$-neighborhood of $A$ and the $m$-neighborhood of $A$. To prove~\eqref{eq: dlp prop second} for $\gamma=1$, we must bound $\mu_1(A'')$ from below. Since $\mu_1(A''\cap C^{(2)})\geq 0= \nu_1(A\cap C^{(2)})$, we can assume that $A\subseteq C^{(1)}$ without loss of generality. Since $\dlp(\mu,\nu)< \delta$, we have
	\begin{align}
		\mu_1(A'')\geq \mu_1(A')
		&=\frac1{\lambda_1}\mu(A')-\frac{\lambda_2}{\lambda_1}\mu_2(A')
		\\&\geq \frac1{\lambda_1}\nu(A)-\frac{1}{\lambda_1}\delta-\frac{\lambda_2}{\lambda_1}\mu_2(A')
		\nonumber
		\\&=\frac{\alpha_1}{\lambda_1}\nu_1(A)+\frac{\alpha_2}{\lambda_1}\nu_2(A)+\frac{\alpha_3}{\lambda_1}\nu_3(A)-\frac{1}{\lambda_1}\delta-\frac{\lambda_2}{\lambda_1}\mu_2(A').
		\label{eqloc: bound on mu1A''}
	\end{align}
	Let us estimate all five terms in~\eqref{eqloc: bound on mu1A''}.
	The first term can be bounded by using~\eqref{eqloc: dlp prop first}, which yields that ${\alpha_1}/{\lambda_1}\geq 1-{\epsilon }/{\lambda_1}$.
	The second and third terms are null because $\nu_2(C^{(1)})=\nu_3(C^{(1)})=0$.
	We leave the fourth term untouched.
	We have to bound the last term.
	On the one hand, $A'$ does not intersect $K^{(2)}$ because $d(A', K^{(2)})\geq d(C^{(1)},K^{(2)})-\delta>0$ by the assumption of $\delta$. On the other hand, 
	we have $\mu_2(K^{(2)})\geq 1-\epsilon$ by construction of $K^{(2)}$. This implies that $\mu_2(A')\leq \epsilon$ and bounds the last term. Altogether, these inequalities yield
	\begin{equation*}
		\mu_1(A'')\geq \nu_1(A)-\frac\epsilon{\lambda_1}-\frac{\delta}{\lambda_1}-\frac{\lambda_2}{\lambda_1}\epsilon=\nu_1(A)-m+\delta\geq \nu_1(A)-m.
	\end{equation*}
	The bound~\eqref{eq: dlp prop second} is proved for $\gamma=1$. Exchanging $1$ and $2$ yields~\eqref{eq: dlp prop second} for $\gamma=2$. 
\end{proof}
\subsection{Examples}
\label{section: examples}
In this section, we present three examples of non-irreducible Markov chains to which Theorems \ref{theo: main result} or \ref{theo: main result (simplified)} apply. Along the way, we present a technical result (Proposition \ref{prop: rorw}), which is a useful tool for proving~\ref{hyp: getting fast to classes}.

As mentioned in the introduction, non-irreducible Markov chains are commonly found in applications.  
Example~\ref{ex: lotka volterra} is one of the many examples of competition models where extinctions~---~which are definitive by nature~---~make the  Markov chain inherently non-irreducible. Example~\ref{ex: lotka volterra} also provides an instance of non-convex rate function.

Example \ref{ex: rorw} illustrates the somewhat extreme case where there are no communicating classes. We show that Theorem \ref{theo: main result} applies and yields the weak LDP with rate function $I\equiv \infty$.

Example~\ref{ex: one dimensional} involves a dynamical system on $\R$ perturbed by some bounded additive noise.
In this example, the communicating classes are explicitly identified.
\begin{example}[Competitive model with extinctions]
	\label{ex: lotka volterra}
	We consider a system of $d\geq 2$ interacting species governed by competitive Lotka-Volterra dynamics with nonnegative interaction matrix $(a_{i,j})$ and positive intrinsic growth rates $r_1,\ldots,r_d$. The population dynamics is described by the system of differential equations
	\begin{equation*}
		\frac{\d x_{i}}{\d t}(t)=r_ix_i(t)\bigg(1-\sum_{j=1}^da_{i,j}x_j(t)\bigg), \qquad 1 \leq i\leq d.
	\end{equation*}
	Let $F:\R\times \R^d_+\to \R^d_+$ denote the flow of the system. 
	We fix an initial measure $\beta\in\Pc(\R^d_+)$, which we assume is absolutely continuous with respect to the Lebesgue measure, with lower semicontinuous and bounded density.
	If nothing were to interfere with the dynamics, the distribution of the populations at time $t$ would be $\beta\circ F(t,\cdot)^{-1}\in \Pc(\R_+^d)$.
	However, we shall now introduce random perturbations.
	Let $(Z_{n,i})$ be a family of i.i.d.~positive, absolutely continuous, random variables and let $g$ denote their common density. We assume that at each integer time, a catastrophic event occurs (pesticide application, hunting, flood, etc), effectively reducing the population of species $i$ by an amount $Z_{n,i}$.
	Let $X_n = (X_{n,1}, \dots, X_{n,d}) \in \mathbb{R}^d$ denote the population size of each species immediately after the $n$-th perturbation. 
	If $X_{n,i}\leq 0$, we say that species $i$ is \emph{extinct} at time $n$.\footnote{Instead of representing extinct species by a null population size, we choose to represent them by arbitrary nonpositive values. We do this to maintain absolute continuity with respect to the Lebesgue measure without modifying the dynamics. In the interpretation, negative values should be understood as zero.}
	Extinction is definitive, and extinct species do not influence the dynamics anymore.
	The population dynamics is driven by the relation
	\begin{equation*}
		X_{n+1,i}=F_i(1,X_{n}^+)-Z_{n+1,i},\qquad 1\leq i\leq d,
	\end{equation*}
	where $x^+=(\max(x_1,0),\ldots, \max(x_d,0))$ for all $x\in\R^d$. 
	Notice that whenever $X_{n,i}\leq 0$, we have $F_i(1,X_n^+)=0$ so that $X_{n+1,i}=-Z_{n+1,i}\leq 0$ (extinctions are indeed definitive).
	The sequence $(X_n)_{n\geq 1}$ is a Markov chain. Since extinctions are definitive, the Markov chain is not irreducible.\footnote{Denoting by $\psi$ the common law of $-Z_{n,i}$, the Markov chain is actually $\psi^{\otimes d}$-irreducible. This remark allows us to use existing results of the literature to derive the large deviations of $(L_n)$ under the assumption that the initial measure (and hence the process at all time) is supported on $\R^d_-$ \cite{deacosta2022}. However, $\R^d_-$ is  precisely the domain where the Markov chain does not have a meaningful practical interpretation, as a point in $\R^d_-$ corresponds to all species being extinct. The large deviations with an arbitrary initial measure cannot be deduced without studying the theory of non-irreducible Markov chains}
	The density of the Markov kernel is given by 
	\begin{equation*}
		\rho(x,y)=\prod_{i=1}^d g(F_i(1,x^+)-y_i).
	\end{equation*}
	If $g$ is lower semicontinuous and bounded, then so is $\rho$.
	The Markov chain satisfies the assumptions of Theorem~\ref{theo: main result (simplified)}.
	Hence, $(L_n)$ satisfies a weak LDP with rate function $I$ satisfying the conclusions of Theorem~\ref{theo: main result}.
	Let us discuss admissibility.
	There are $2^d$ communicating classes, indexed by the subsets of $\{1,\ldots,d\}$ indicating which species are extinct, and the relation $(\leadsto)$ is the inclusion.
	Let $\mu\in \Pc(\R^d)$ be absolutely continuous with respect to the Lebesgue measure. Given $I\subseteq \{1,\ldots, d\}$, we set $A_{i,I}=\R_-$ if $i\in I$ and $A_{i,I}=\R_+$ if $i\notin I$. 
	The measure $\mu$ is admissible if and only if, for all $I_1,I_2\subseteq \{1,\ldots ,d\}$ such that $\mu(A_{1,I_1}\times\ldots \times A_{d,I_1})>0$ and $\mu(A_{1,I_2}\times\ldots \times A_{d,I_2})>0$, we have $I_1\subseteq I_2$ or $I_2\subseteq I_1$.
	
	We now consider the simple case $d=2$ and show that the rate function is not convex. Assume that $a_{1,1},a_{2,2}>0$ and that $g$ is positive on some interval $(0,\epsilon)$, and let us find two measures $\mu_1\in \Pc(\R_+\times \R_-)$ and $\mu_2\in \Pc(\R_-\times \R_+)$ such that $\IRL(\mu_1)$ and $\IRL(\mu_2)$ are both finite. For all $x\in K_1:=[\frac {r_1}{2a_{1,1}}, \frac {r_1}{a_{1,1}}]\times[-1,0] $, we have 
	\begin{equation*}
		p(x,K_1)\geq p\Big(\Big(\frac {r_1}{2a_{1,1}},0\Big), K_1\Big)=:\alpha_1>0.
	\end{equation*}
	Since $\Pc(K_1)$ is compact, the weak LDP upper bound yields
	\begin{equation*}
		-\inf_{\mu_1\in \Pc(K_1)}I(\mu_1)\geq \limsup_{n\to \infty}\frac 1n \log \P(L_n\in \Pc(K_1))\geq -\log \alpha_1>-\infty.
	\end{equation*}
	Therefore, there exists $\mu_1\in \Pc(\R_+\times \R_-)$ such that $I(\mu_1)<\infty$. By the same argument, there exists $\mu_2\in\Pc(\R_-\times \R_+)$ such that $I (\mu_2)<\infty$.
	Let $\mu=\frac12\mu_1+\frac12\mu_2$. The measure $\mu$ is not admissible, hence $I(\mu)=\infty$. 
	This shows that the rate function $I$ is not convex.
\end{example}

\begin{example}
	\label{ex: rorw}
	Let $d=1$. Let $0<\alpha<1$ and set $h:x\mapsto\mathbf 1_{(0,1)}(x)(1-\alpha)x^{-\alpha}$.
	Let $(E_n)$ be an i.i.d.~sequence of random variables of density $h$. 
	Let $\beta$ be a probability measure with bounded lower semicontinuous density with respect to the Lebesgue measure and let $(X_n)$ be the Markov chain defined by 
	\begin{equation*}
		X_1\sim \beta,\qquad 
		X_{n+1}=X_n+E_{n+1},\qquad n\geq 1.
	\end{equation*}
	In other words, the stochastic kernel is defined by the density 
	\begin{equation*}
		(x,y)\mapsto \rho(x,y):=h(y-x)=\mathbf1_{(x,x+1)}(y)\frac{1-\alpha}{(y-x)^\alpha}.
	\end{equation*} 
	Let us apply Theorem \ref{theo: main result}.
	Assumption \ref{hyp: pseudo lsc density} is satisfied because $\rho$ is lower semicontinuous on $(\R\cup \{\xinit\})\times \R$.
	The sequence $(X_n)$ is strictly increasing, thus there are no communicating classes and $C=\emptyset$, where $C$ is as in \eqref{eq: C}.
	Hence, Assumptions \ref{hyp: pseudo uniformity} and \ref{hyp: null border} are trivially satisfied.
	It remains to prove \ref{hyp: getting fast to classes}. In order to do so, we first consider the Markov chain $(Y_n)=(X_{kn})$, where $k=\lceil 1/(1-\alpha)\rceil$, which is easily shown to have bounded, lower-semicontinuous, kernel density.
	The Markov chain $(Y_n)$ is increasing.
	Let $U$ be an open interval and $\kappa>0$. 
	By Proposition~\ref{prop: rorw} below,
	\begin{equation}
		\label{eqloc: Mn(U) rorw}
		\lim_{n\to \infty}\frac 1n\log \P\Big(M_n(U)\geq \frac \kappa k\Big)=-\infty,
	\end{equation}
	where $M_n$ is the empirical measure of $(Y_n)$ up to time $n$.
	For $n\in \N$, since $(X_n)$ is increasing, if there are at least $\kappa n$ terms of $(X_1,\ldots, X_n)$ inside the interval $U$, then there are at least $\kappa n/k-1$ terms of $(Y_1,\ldots, Y_{\lfloor n/k\rfloor})$ inside $U$. Therefore,
	\begin{equation*}
		\P(L_n(U)\geq \kappa)\leq \P\Big(M_{\lfloor n /k\rfloor}(U)\geq \frac \kappa k-\frac kn\Big).
	\end{equation*}
	Therefore,
	\begin{equation}
		\label{eqloc: getting fast out of U sepcial exp case}
		\lim_{n\to \infty}\frac 1n\log \P(L_n(U)\geq \kappa)\leq \lim_{n\to \infty}\frac kn\log \P\Big(M_{n}(U)\geq \frac \kappa k\Big)=-\infty,
	\end{equation}
	further implying that Assumption~\ref{hyp: getting fast to classes} is satisfied.\footnote{For more details on why \eqref{eqloc: getting fast out of U sepcial exp case} implies \ref{hyp: getting fast to classes}, see the end of the proof of Proposition \ref{prop: rorw}.} By Theorem \ref{theo: main result}, $(L_n)$ satisfies the weak LDP with rate function $I\equiv \infty$.\footnote{Notice that, in this example, the weak LDP with rate function $I\equiv \infty$ can also be deduced directly from \eqref{eqloc: getting fast out of U sepcial exp case}.}
	\end{example}
	The following proposition is used in the example above. We will also use it later in Example~\ref{ex: one dimensional}. 
	\begin{prop}
		\label{prop: rorw}
		Let $d=1$.
		Assume that $(X_n)$ is increasing, almost surely.
	Assume, in addition, that $\rho$ is bounded on $(\R\cup \{\xinit\})\times \R$. 
	Then, for any open bounded interval $U\subseteq \R$ and any $\kappa>0$,  
	\begin{equation}
		\label{eq: getting out of U fast}
		\lim_{n\to \infty}\frac 1n\log \P(L_n(U)\geq \kappa)=-\infty.
	\end{equation} 
	In particular, Assumption~\ref{hyp: getting fast to classes} is satisfied and $(L_n)$ satisfies the weak LDP with rate function $I\equiv \infty$. The same conclusions hold if $(X_n)$ is strictly decreasing.
\end{prop}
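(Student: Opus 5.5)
Because the chain is monotone, the time it spends in the bounded interval $U$ is a single contiguous block of times. I will show that it is superexponentially unlikely for that block to have linear length, by bounding the volume of increasing trajectories confined to an interval. Write $U=(a,b)$ and $\ell=b-a$, and let $M$ be a bound for $\rho$ on $(\R\cup\{\xinit\})\times\R$. Assume $(X_n)$ is increasing; the decreasing case follows by the reflection $x\mapsto -x$, which preserves boundedness of the density. On the event $\{L_n(U)\geq\kappa\}$ there are at least $m:=\lceil\kappa n\rceil$ indices $i\leq n$ with $X_i\in U$. Since $(X_i)$ is increasing, these indices form an interval $\{s+1,\ldots,s+m'\}$ with $m'\geq m$. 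Hence
\begin{equation*}
\P(L_n(U)\geq\kappa)\leq\sum_{s=0}^{n-m}\P\big(a<X_{s+1}<X_{s+2}<\cdots<X_{s+m}<b\big).
\end{equation*}

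To bound each term, I will apply the Markov property at time $s$, conditioning on $X_s=x$ (with $x=\xinit$ when $s=0$). The resulting probability is
\begin{equation*}
\int_{a<y_1<\cdots<y_m<b}\rho(x,y_1)\rho(y_1,y_2)\cdots\rho(y_{m-1},y_m)\,\leb(\d y)\leq M^m\,\leb\{a<y_1<\cdots<y_m<b\}=\frac{M^m\ell^m}{m!}.
\end{equation*}
This bound is uniform in $x$, so integrating against the law of $X_s$ changes nothing. Therefore $\P(L_n(U)\geq\kappa)\leq n\,(M\ell)^m/m!$. Because $m\geq\kappa n$, Stirling's formula gives $\frac1n\log\big(n(M\ell)^m/m!\big)\leq \frac{\log n}{n}+\kappa\log(M\ell)-\kappa\log(\kappa n)+O(1)\to-\infty$, which proves \eqref{eq: getting out of U fast}. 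The ordered-simplex volume $\ell^m/m!$ is what produces the $m!$; the only point requiring care is the bookkeeping that makes the visiting times contiguous.

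Next I deduce Assumption~\ref{hyp: getting fast to classes}, and the same argument handles every $\mu$. Let $\mu\in\Pc(\R)$ and pick a bounded open interval $U$ with $\mu(U)>0$. Set $\kappa=\mu(U)/3$ and use Lemma~\ref{lem: dlp small implies similar support}, or more elementarily the portmanteau theorem: choose a compact $K\subseteq U$ with $\mu(K)\geq\frac23\mu(U)$ and $\delta=\min(\frac12 d(K,U^c),\kappa)$. Then every $\nu\in V:=\Bc_\LP(\mu,\delta)$ satisfies $\nu(U)\geq\kappa$. Consequently $\P(L_n\in V)\leq\P(L_n(U)\geq\kappa)$, and the superexponential decay transfers to $V$. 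This gives \eqref{eq: getting fast to classes} for all $\mu$, in particular for those with $\mu(C)<1$.

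Finally, the chain is strictly monotone, so $\tilde\rho(x,x)=0$ for every $x$. To see this, note that $\rho^k(x,\cdot)$ vanishes a.e.\ on $(-\infty,x]$, while $\tilde\rho^{-1}((0,\infty))$ is open by Lemma~\ref{lem: positive density}; so $\tilde\rho(x,x)>0$ would force positive mass below $x$. Hence there are no communicating classes, $C=\emptyset$, and Assumptions~\ref{hyp: pseudo uniformity} and~\ref{hyp: null border} hold trivially. Assumption~\ref{hyp: pseudo lsc density} is not available here, so I will not invoke Theorem~\ref{theo: main result}. The weak LDP with $I\equiv\infty$ instead follows directly from the previous paragraph. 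Every $\mu$ has $\overline s(\mu)=-\infty$, so the Ruelle--Lanford function exists and equals $\infty$ everywhere, and Lemma~\ref{lem: RL function implies LDP} concludes.
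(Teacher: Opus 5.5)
Your proof is correct, and its core estimate follows a genuinely different, more elementary route than the paper's.

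The paper's route is as follows:
\begin{enumerate}
\item restrict to short intervals $U=(x_0,x_0+a)$ with $\rho\leq 1/a$;
\item apply the strong Markov property at the hitting time $T$ of $U$;
\item dominate the joint law of the increments $Z_k=X_{T+k}-X_{T+k-1}$ on $(0,a)$ by that of i.i.d.\ uniform variables on $(0,a)$;
\item conclude with Cram\'er's theorem and $\Lambda^*(0)=\infty$;
\item cover a general $U$ by finitely many short intervals.
\end{enumerate}
You instead use contiguity of the visiting times, a union bound over at most $n$ entrance times, the ordinary Markov property at deterministic times, and the volume $\ell^m/m!$ of the ordered simplex. This avoids the stopping time, the large deviation theorem and the covering step, and it works for intervals of any length. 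It also gives the explicit decay $\P(L_n(U)\geq\kappa)\leq e^{-\kappa n\log n\,(1+o(1))}$. In fact the paper's event $\{Z_1+\cdots+Z_m<a\}$ is itself a simplex of volume $a^m/m!$, so Cram\'er's theorem is a detour there.

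Your deduction of Assumption~\ref{hyp: getting fast to classes} is also cleaner. Every Borel probability measure on $\R$ is inner regular (alternatively, use the portmanteau theorem), so your ball argument applies to every $\mu$. It produces one fixed neighborhood $V$ with $\frac1n\log\P(L_n\in V)\to-\infty$. The paper handles non-absolutely continuous $\mu$ through Proposition~\ref{prop: nonadmissible: abscont}, which imports the Donsker--Varadhan upper bound. Strictly speaking, that only yields $\overline s(\mu)=-\infty$, i.e.\ decay rates tending to $-\infty$ along shrinking balls, rather than a single neighborhood as required in \eqref{eq: getting fast to classes}.

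Your final paragraph on $C=\emptyset$ is unnecessary and internally inconsistent, for two reasons:
\begin{enumerate}
\item It relies on Lemma~\ref{lem: positive density}, which requires Assumption~\ref{hyp: pseudo lsc density} — precisely the assumption you then say is unavailable.
\item It uses monotonicity of the kernel from every starting point $x$, whereas the hypothesis only says that the chain started from $\beta$ is a.s.\ increasing.
\end{enumerate}
Since you prove \eqref{eq: getting fast to classes} for every $\mu$, Assumption~\ref{hyp: getting fast to classes} holds whatever $C$ is, and the weak LDP with $I\equiv\infty$ follows as you say. That paragraph can simply be deleted.
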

\begin{proof}
	Let $\kappa>0$.
	We first assume that $U$ is of the form $U=(x_0,x_0+a)$ where $x_0\in \R$ and $a>0$ is such that $\rho(x,y)\leq 1/a$ for all $x,y\in \R$. 
	Let $T$ denote the hitting time of $U$. If $T$ is infinite, then $L_n(U)=0$ for all $n\in \N$, hence we assume $T<\infty$.
	We set 
	\begin{equation*}
		Z_0=X_T,\qquad Z_k=X_{T+k}-X_{T+k-1},\qquad k\geq 1.
	\end{equation*}
	Since $(X_k)$ is (strictly) increasing, the increments $Z_k$ are positive for $k\geq 1$.
	We have
	\begin{align*}
		\P(L_n(U)\geq \kappa)\leq \P(Z_1+\ldots+Z_{\lceil \kappa n\rceil}< a).
	\end{align*}
	Let $\gamma$ denote the law of $Z_0$ and let $\varphi_k(z)=\rho(z_0+\ldots +z_{k-1},z_0+\ldots +z_{k})$ for $z\in \R^{ \lceil \kappa n\rceil+1}$, so that
	\begin{align*}
		\P(Z_1+\ldots+Z_{\lceil \kappa n\rceil}< a)
		&=\int_{\R^{\lceil \kappa n\rceil+1}}\mathbf 1_{(0,a)}\Bigg(\sum_{i=1}^{\lceil \kappa n\rceil}z_i\Bigg)\gamma(\d z_0)
		\prod_{k=1}^{{\lceil \kappa n\rceil}}\mathbf 1_{(0,a)}(z_k)\varphi_{k}(z)\leb(\d z_k).
	\end{align*}
	In this expression, the factors $\mathbf 1_{(0,a)}(z_k)$ underline the fact that the event  $\{Z_1+\ldots+Z_{\lceil \kappa n\rceil}<a  \}$ is incompatible with any event $\{Z_k\geq a\}$ because the increments $Z_k$ are positive.
	Since $\varphi_k(z)\leq 1/a$ by definition, we have
	\begin{align*}
		\P(Z_1+\ldots+Z_{\lceil \kappa n\rceil}<a )
		&\leq \int_{\R^{\lceil \kappa n\rceil+1}}\mathbf 1_{(0,a)}\Bigg(\sum_{i=1}^{\lceil \kappa n\rceil}z_i\Bigg)\gamma(\d z_0)
		\prod_{k=1}^{{\lceil \kappa n\rceil}}\frac 1a\mathbf 1_{(0,a)}(z_k)\leb(\d z_k)
		\\&=\P(Z'_1+\ldots+Z'_{\lceil \kappa n\rceil}< a),
	\end{align*}
	where $(Z'_k)$ is a sequence of i.i.d.~random variables of uniform law over $(0,a)$.
	Let $\epsilon>0$. For $n$ large enough, we have $a\leq \epsilon \lfloor \kappa n\rfloor$. By Cram\'er's Theorem \cite[Theorem 2.2.3]{DZ},
	\begin{equation}
		\begin{split}
		\label{eqloc: cramers theorem}
		\limsup_{n\to \infty}\frac 1n\log\P(Z'_1+&\ldots+Z'_{\lceil \kappa n\rceil}< a)
		\\&\leq
		\limsup_{n\to \infty}\frac 1n\log\P\Big(\frac 1{\lceil \kappa n\rceil}(Z'_1+\ldots+Z'_{\lceil \kappa n\rceil})< \epsilon\Big)
		=-\kappa \Lambda^*(\epsilon),
		\end{split}
	\end{equation}
	where 
	\begin{equation*}
		\Lambda^*(\epsilon)=\sup_{\lambda\in \R}\Bigg(\lambda \epsilon-\log\int_0^a \frac1ae^{\lambda x}\d x\Bigg)=\sup_{\lambda\in \R^*}\Big(\lambda \epsilon-\log \frac{e^{\lambda a}-1}{\lambda a}\Big).
	\end{equation*}
	Since $\Lambda^*$ is lower semicontinuous and $\Lambda^*(0)=\infty$, taking the limit as $\epsilon\to 0$ in~\eqref{eqloc: cramers theorem} yields
	\begin{equation*}
		\limsup_{n\to \infty}\frac 1n\log\P(Z'_1+\ldots+Z'_{\lceil \kappa n\rceil}< a)=-\infty.
	\end{equation*}
	This completes the proof of~\eqref{eq: getting out of U fast} for any $\kappa>0$ and any interval of the form $U=(x_0,x_0+a)$ where $x_0\in \R$ and $a>0$ is such that $\rho(x,y)\leq 1/a$ for all $x,y\in \R$.
	In the general case, writing $U$ as the union of overlapping such small intervals $U_1,\ldots, U_s$ and using~\eqref{eq: getting out of U fast} for each $U_i$ and $\kappa'=\kappa/s$ yields
	\begin{align*}
		\limsup_{n\to \infty}\frac 1n\log \P(L_n(U)\geq \kappa)
		&\leq \limsup_{n\to \infty}\frac 1n \log\sum_{i=1}^s\P\Big(L_n(U_i)\geq \frac \kappa s\Big)
		\\&=\max_{1\leq i\leq s}\limsup_{n\to \infty}\frac 1n\log \P\Big(L_n(U_i)\geq \frac \kappa s\Big)=-\infty.
	\end{align*}
	Therefore,~\eqref{eq: getting out of U fast} holds for any open bounded interval $U$.
	Let us now prove~\ref{hyp: getting fast to classes}. By the strict monotonicity of $(X_n)$, we have $C=\emptyset$ and $\mu(C)=0$ for all $\mu \in \Pc(\R)$. Proving~\ref{hyp: getting fast to classes} is proving Equation~\eqref{eq: getting fast to classes} for all $\mu\in \Pc(\R)$.
	Let $\mu\in \Pc(\R)$ and assume that $\mu\ll \leb$ (otherwise, Proposition~\ref{prop: nonadmissible: abscont} yields the conclusion).
	There exists an open bounded interval $U$ such that $\mu(U)>0$. By Lemma~\ref{lem: dlp small implies similar support}, there exist $\kappa>0$ and $\delta>0$ such that $\nu(U)\geq \kappa $ for all $\nu \in \Bc_\LP(\mu, \delta)$.
	Therefore, by~\eqref{eq: getting out of U fast},
		\begin{equation*}
		\limsup_{n\to \infty}\frac 1n\log \P(L_n\in \Bc_\LP (\mu, \delta))\leq \limsup_{n\to \infty}\frac 1n\log \P(L_n(U)\geq \kappa)=-\infty.
	\end{equation*}
	Equation~\eqref{eq: getting fast to classes} holds with $V=\Bc_\LP(\mu, \delta)$. As a consequence, the weak LDP holds with rate function $I=\IRL(\mu)\equiv \infty$.
\end{proof}
\begin{example}
	\label{ex: one dimensional}
	Let $d=1$ and $f:\R\to \R$ be a continuous nondecreasing function. Let $\varphi$ be a lower semicontinuous bounded probability density such that $\varphi^{-1}((0,\infty))=(-1,1)$.
	We consider a Markov chain whose stochastic kernel is given by
	\begin{equation*}
		\rho(x,y)=\varphi(y-f(x)),\qquad x,y\in \R.
	\end{equation*}
	In other words, for all $n\in \N$,
	\begin{equation*}
		X_{n+1}=f(X_n)+Z_{n+1},
	\end{equation*}
	where $(Z_i)$ is an i.i.d.~sequence of random variables of law $\varphi$ independent of $X_1$. 
	See Figure~\ref{fig: example dynamic system} for a visual representation of the dynamics. 
	\begin{figure}[!tb]
		\centering 
		\includegraphics[width=0.6\textwidth]{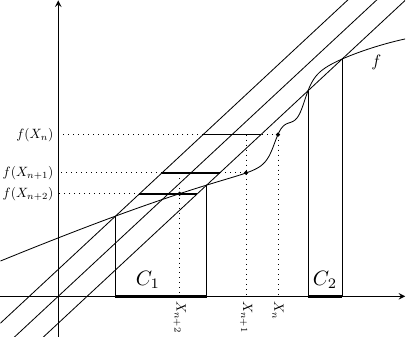}
		\captionsetup{width=.9\linewidth}
		\caption[width=0.8\textwidth]{The Markov chain of Example~\ref{ex: one dimensional} for an example function $f$. The three parallel lines are the lines of equations $y=x-1$, $y=x$ and $y=x+1$.
			The thick horizontal segment at $y=f(X_n)$ (resp. $y=f(X_{n+1})$ and $y=f(X_{n+2})$) denotes the support of $X_{n+1}$ (resp. $X_{n+2}$ and $X_{n+3}$).
			There are two classes $C_1$ and $C_2$, and $2\leadsto 1$. To the left of $C_1$, the Markov chain can only move to the right. Between $C_1$ and $C_2$ and to the right of $C_2$, the Markov chain can only move to the left.}
	\label{fig: example dynamic system}
	\end{figure}
	Let us study the behavior of such Markov chain. We shall begin the analysis by the observation that points $a\in \R$ such that $f(a)-a\leq -1$ or $f(a)-a\geq 1$ work like `ratchets' in the system.
	\begin{lem}
		\label{lem: ratchet prop}
		Let $a\in \R$. If $f(a)-a\leq -1$, then all $x\leq a$ satisfy $\tilde p(x,[a,\infty))=0$.
		 If $f(a)-a\geq 1$, then all $x\geq a$ satisfy $\tilde p(x,(-\infty,a])=0$.
	\end{lem}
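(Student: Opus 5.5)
We argue by induction on the number of steps, using that $f$ is nondecreasing and that $\varphi$ vanishes outside $(-1,1)$. Take the first case, $f(a)-a\leq -1$. It suffices to show that for every $x\leq a$, the measure $p(x,\cdot)$ is concentrated on $(-\infty,a)$. Indeed, if $p(x,\cdot)$ gives no mass to $[a,\infty)$ for every $x\leq a$, the Markov chain started from $x\leq a$ stays in $(-\infty,a)$ at every step. Hence $p^k(x,[a,\infty))=0$ for all $k\geq 1$, and therefore $\tilde p(x,[a,\infty))=\sum_k 2^{-k}p^k(x,[a,\infty))=0$.

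\textbf{One step.} Let $x\leq a$. Since $f$ is nondecreasing, $f(x)\leq f(a)\leq a-1$. The density $y\mapsto\rho(x,y)=\varphi(y-f(x))$ vanishes unless $y-f(x)\in(-1,1)$, that is, unless $y<f(x)+1\leq a$. So $p(x,[a,\infty))=\int_{[a,\infty)}\varphi(y-f(x))\,\d y=0$.

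\textbf{Induction on $k$.} Suppose $p^k(x,[a,\infty))=0$ for all $x\leq a$. Writing
\begin{equation*}
	p^{k+1}(x,[a,\infty))=\int p^k(x,\d z)\,p(z,[a,\infty)),
\end{equation*}
the integrand vanishes for $p^k(x,\cdot)$-almost every $z$. Such $z$ lie in $(-\infty,a)$ by the induction hypothesis, and for those $z$ the one-step bound gives $p(z,[a,\infty))=0$. Hence $p^{k+1}(x,[a,\infty))=0$, and summing over $k$ yields $\tilde p(x,[a,\infty))=0$.

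\textbf{Second case.} The case $f(a)-a\geq 1$ is symmetric. For $x\geq a$ we have $f(x)\geq f(a)\geq a+1$, so the support of $p(x,\cdot)$ lies in $(f(x)-1,\infty)\subseteq(a,\infty)$. The same induction then gives $\tilde p(x,(-\infty,a])=0$.

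There is no real obstacle here. The only point to keep straight is that the strict inequality coming from the open support $(-1,1)$ is what places $a$ itself outside the reachable set, which is why the closed half-lines $[a,\infty)$ and $(-\infty,a]$ appear in the statement.
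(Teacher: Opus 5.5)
Your proof is correct and follows the paper's argument: monotonicity of $f$ together with $\varphi^{-1}((0,\infty))=(-1,1)$ gives $p(x,[a,\infty))=0$ for every $x\leq a$, and induction on the number of steps gives $\tilde p(x,[a,\infty))=0$ (you write this induction out via Chapman--Kolmogorov, where the paper only says ``by induction''). One small quibble with your closing remark: the open support is not what handles the endpoint, because $p(x,\cdot)\ll\leb$ already gives $p(x,\{a\})=0$, so the closed half-lines would still work if $\varphi$ were positive on $[-1,1]$.
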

	\begin{proof}
		If $a\in \R$ is such that $f(a)-a\leq -1$, since $f$ is nondecreasing, all $x\leq a$ satisfy $f(x)+1\leq a$, implying that $p(x, [a,\infty))=0$. Thus, by induction, if $X_n\leq a$ for some $n$, then $X_k< a$ for all $k> n$. In other words,
		all $x\leq a$ satisfy $\tilde p(x,[a,\infty))=0$. The proof for the case $f(a)-a\geq 1$ is symmetrical.
	\end{proof}
	A first consequence of this observation is that $C\subseteq B:=\{x\in \R\ |\ f(x)-x\in(-1,1)\}$, where $C$ is as in~\eqref{eq: C}. Indeed, all $x\in \R\setminus B$ satisfy either $\tilde p(x,(-\infty,x])=0$ or $\tilde p(x, [x,\infty))=0$. In both cases, $\tilde \rho (x,x)=0$ thus $x\notin C$. 
	We now describe the class structure of this Markov chain in Proposition~\ref{prop: 1d example communicating classes}, and use Theorem~\ref{theo: main result (simplified)} to prove that $(L_n)$ satisfies the weak LDP in Proposition~\ref{prop: 1d example thm}.
	\begin{prop}
	\label{prop: 1d example communicating classes}
	The communicating classes are the connected components of $B$. 
	Two consecutive communicating classes $U_1,U_2$ with $\sup U_1\leq \inf U_2$ satisfy $U_1\leadsto U_2$ (resp. $U_2\leadsto U_1$) if and only if $f(x)-x\geq 1$ for all $x\in[\sup U_1,\inf U_2]$ (resp. $f(x)-x\leq -1$ for all $x\in[\sup U_1,\inf U_2]$).
	\end{prop}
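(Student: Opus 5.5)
The plan is to prove the two claims in turn: first that each connected component of $B$ is a communicating class, then the comparability criterion for consecutive components.

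\textbf{Positivity of the density.} The density $\rho(x,y)=\varphi(y-f(x))$ is positive exactly when $|y-f(x)|<1$. Since $f$ is continuous, the set $B=\{x: |f(x)-x|<1\}$ is open, so its connected components are open intervals. We already know $C\subseteq B$ by the ratchet Lemma~\ref{lem: ratchet prop}.

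\textbf{Components of $B$ are classes.} Let $U$ be a component of $B$ and let $x,y\in U$, say $x<y$ (the case $x=y$ is the statement $\tilde\rho(x,x)>0$). From any $z\in U$ the chain jumps with positive density to any point of $(f(z)-1,f(z)+1)$. This interval contains $z$ because $z\in B$, and so it contains a neighbourhood $(z-\eta_z,z+\eta_z)$ of $z$.
\begin{itemize}
\item To move from $x$ to $y$, cover the compact segment $[x,y]\subseteq U$ by finitely many such neighbourhoods. This gives a chain of points $x=z_0<z_1<\dots<z_m=y$ with $\rho(z_i,z_{i+1})>0$.
\item Lemma~\ref{lem: positive density} (points 3 and 4) then gives $\rho^m(x,y)>0$, hence $\tilde\rho(x,y)>0$. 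The same argument, moving leftwards, gives $\tilde\rho(y,x)>0$.
\item For $\tilde\rho(x,x)>0$, go from $x$ to a nearby point and back.
\end{itemize}
So $U$ is contained in a communicating class.

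\textbf{Maximality.} Suppose $U$ is properly contained in a class. Since classes lie in $C\subseteq B$, the class would contain a point $w\in B\setminus U$, say $w>\sup U$. Then $b:=\sup U\notin B$, so either $f(b)-b\geq1$ or $f(b)-b\leq-1$.
\begin{itemize}
\item If $f(b)-b\leq-1$, the ratchet lemma forbids passing from $U$ (which lies left of $b$) to $w$.
\item If $f(b)-b\geq1$, the lemma forbids returning from $w\geq b$ to $U$.
\end{itemize}
Either case contradicts mutual reachability. Hence the classes are exactly the components of $B$.

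\textbf{Order between consecutive components.} Let $U_1,U_2$ be consecutive components and set $J=[\sup U_1,\inf U_2]$. Since there are no components of $B$ strictly between them, $J\cap B=\emptyset$. So on $J$ the function $f(x)-x$ takes values in $(-\infty,-1]\cup[1,\infty)$. By continuity of $f$ and connectedness of $J$, the sign of $f(x)-x$ is constant on $J$: either $f(x)-x\geq1$ on all of $J$, or $f(x)-x\leq-1$ on all of $J$.

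\emph{Case $f(x)-x\geq1$ on $J$.} The ratchet at $a=\inf U_2$ shows that $U_2\not\leadsto U_1$. For $U_1\leadsto U_2$, start near $\sup U_1$. There $f(x)-x$ is close to $f(\sup U_1)-\sup U_1\geq1$, so the chain can move strictly right by an amount bounded below on the compact segment $J$. This uses $f(z)+1>z+1$ and continuity, which give a uniform step $\eta>0$ with $\rho(z,z')>0$ for $z'\in(z,z+\eta)$. After finitely many steps the chain enters $U_2$, and Lemma~\ref{lem: positive density} gives $\tilde p(x,U_2)>0$.

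\emph{Case $f(x)-x\leq-1$ on $J$.} This is symmetric and gives $U_2\leadsto U_1$ but not $U_1\leadsto U_2$.

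Since exactly one of the two cases holds, the "if and only if" follows.

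\textbf{Main obstacle.} The delicate step is the uniform rightward step across $J$ together with the endpoint behaviour. Near $\sup U_1$, where $f(x)-x$ may equal exactly $1$, the support of $p(x,\cdot)$ is $(f(x)-1,f(x)+1)$. This still contains points slightly to the right of $x$ as long as $f(x)>x$, which holds strictly on a neighbourhood of $J$. I would handle this by using the continuity of $f(x)-x$ and compactness of $J$ to obtain a uniform positive step size, and I would also treat the case where $J$ is a single point.
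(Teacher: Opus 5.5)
\textbf{The gap is in the proof of $U_1\leadsto U_2$ (and symmetrically $U_2\leadsto U_1$).} It rests on a false claim.

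On $J=[\sup U_1,\inf U_2]$ you have $f(z)-z\geq 1$. So the support $(f(z)-1,f(z)+1)$ of $p(z,\cdot)$ begins at $f(z)-1\geq z$, and wherever $f(z)-z>1$ it misses the whole interval $(z,f(z)-1]$. For example, $f(z)=z+\tfrac32$ gives support $(z+\tfrac12,z+\tfrac52)$. Hence, as soon as $f-\mathrm{id}>1$ somewhere on $J$, there is no uniform $\eta>0$ with $\rho(z,z')>0$ for all $z'\in(z,z+\eta)$. Likewise, your remark that $(f(x)-1,f(x)+1)$ contains points just to the right of $x$ whenever $f(x)>x$ holds only when $f(x)-x\leq 1$.

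Inside $J$ the chain is forced to make long jumps. So the sentence ``after finitely many steps the chain enters $U_2$'' is precisely what needs proof: nothing in your argument prevents the chain from jumping over $U_2$, which may be a very short interval. The real difficulty is therefore not near $\sup U_1$, as your last paragraph suggests, but at the arrival near $\inf U_2$.

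\textbf{The missing ingredient is the monotonicity of $f$,} which you never use in this part. Set $a=\sup U_1$ and $b=\inf U_2$; continuity forces $f(a)=a+1$ and $f(b)=b+1$. The paper proceeds as follows.
\begin{itemize}
\item Take $x_0\in U_1$ close to $a$ with $f(x_0)>a$, and follow the deterministic orbit $x_{k+1}=f(x_k)$.
\item Each transition has density $\varphi(0)>0$, so $\tilde\rho(x_0,x_k)>0$. While $x_k\in J$, each step has length at least $1$.
\item Let $n$ be the first index with $f(x_n)>b$. Then $x_n\leq b$, so monotonicity gives $b<f(x_n)\leq f(b)=b+1$.
\item Hence $(b,b+1)\subseteq (f(x_n)-1,f(x_n)+1)$. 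This interval meets $U_2$, which gives $\tilde p(x_0,U_2)>0$. The same argument covers the degenerate case $a=b$.
\end{itemize}

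\textbf{The rest of your proposal is correct and matches the paper in substance.} This covers $C\subseteq B$, maximality via the ratchet lemma, and constancy of the sign of $f(x)-x$ on $J$. One small repair is needed inside a component. The finite cover does not by itself give the chain $z_0<\dots<z_m$, since $(z-\eta_z,z+\eta_z)$ is reachable only from its centre. Instead, note that $1-|f(z)-z|$ is continuous and positive on $[x,y]$, hence bounded below by some $\eta>0$. Then every step of length less than $\eta$ is allowed in both directions. The paper uses an open--closed argument on $\{y\in U:\tilde\rho(x,y)>0\}$ here instead; your chain argument is a valid alternative once repaired.
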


\begin{proof}
	We begin by proving that, if $U$ is a connected component of $B$, then $\tilde \rho>0$ on $U^2$. Let $x\in U$ and let $A_x$ denote the open set $\{y\in U\ |\ \tilde \rho(x,y)>0\}$. 
	Since $\rho(x,x)=\varphi(x-f(x))>0$, the set $A_x$ contains $x$. 
	We prove that $A_x$ is closed. Let $z\in U\setminus A_x$. We have $\tilde \rho(z,z)>0$, thus, by Lemma~\ref{lem: positive density}, $\tilde \rho(\cdot,z)>0$ on a neighborhood $V\subseteq U$ of $z$. If some $y\in V$ satisfies $y\in A_x$, then both $\tilde \rho(x,y)>0$ and $\tilde \rho(y,z)>0$, thus $\tilde \rho(x,z)>0$ by the transitivity property of Lemma~\ref{lem: positive density}. This is a contradiction with $z\notin A_x$, hence we proved that $V\subseteq U\setminus A_x$. Therefore, $A_x$ is closed.
	Since $U$ is connected and $A_x$ is not empty, we have $A_x=U$, further implying that $\tilde \rho$ is positive on $U^2$.
	This proves that $C=B$. To complete the identification of communicating classes as the connected components of $B$, it only remains to prove that two such connected components are not parts of the same class.
	
	Let $U_1, U_2$ be two distinct connected components of $B$. We use Lemma~\ref{lem: ratchet prop} to prove that $U_1$ and $U_2$ are not part of the same communicating class by showing that $U_1\not \leadsto U_2$ or $U_2\not \leadsto U_1$. Without loss of generality, we assume that $a:=\sup U_1\leq \inf U_2<\infty$. Then, $f(a)-a\in\{-1,1\}$. 
	If $f(a)-a=-1$, it is impossible to reach $U_2$ from any point of $U_1$ because $U_2\subseteq [a,\infty)$. 
	If $f(a)-a=1$, it is impossible to reach $U_1$ from any point $x\geq a$ because $U_1\subseteq (-\infty,a]$; in particular, it is impossible to reach $U_1$ from any point of $U_2$. 
	This shows that the communicating classes are exactly the connected components of $B$.
	
	We now describe the partial order on the set of communicating classes. Let $U_1$ and $U_2$ be two consecutive connected components of $B$, such that $a:=\sup U_1\leq \inf U_2=:b$ and $f(x)-x\geq1$ for all $x\in [a,b]$.
	By continuity of $f$, we have $f(a)=a+1$ and $f(b)=b+1$. 
	Let $x_0\in U_1$ and let us show that $\tilde p(x_0,U_2)>0$. Since $U_1$ is a communicating class, we can choose $x_0$ arbitrarily close to $a$ without loss of generality. In particular, we can assume that $x_0$ is such that $f(x_0)>a$, since $f$ is continuous and $f(a)=a+1$.
	Let $(x_k)$ be the real sequence defined by $x_{k+1}=f(x_k)$. We have $\tilde \rho(x_0,x_k)>0$ and $x_{k+1}-x_k\geq1$ for all $k$ such that $x_k\leq b$. Let $n$ be the first index such that $f(x_{n})> b$. Since $f$ is nondecreasing, we have $b< f(x_{n})\leq f(b)=b+1$. In particular, we have $\rho(x_{n},y)>0$ for all $y\in (b,b+1)$. The set $(b,b+1)$ intersects $U_2$, thus $\tilde p(x_0,U_2)>0$. By a similar reasoning, we prove that $U_2\leadsto U_1$ if $f(x)-x\leq-1$ for all $x\in [a,b]$.
	\end{proof}
\begin{prop}
	\label{prop: 1d example thm}
	The Markov chain $(X_n)$ satisfies the assumptions of Theorem~\ref{theo: main result (simplified)}.
	The sequence $(L_n)$ satisfies the weak LDP with rate function $I$ satisfying the conclusions of Theorem~\ref{theo: main result}.
\end{prop}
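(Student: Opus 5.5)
We need to check the hypotheses of Theorem~\ref{theo: main result (simplified)}: that $\rho$ is lower semicontinuous and bounded on $(\R\cup\{\xinit\})\times\R$, together with Assumptions~\ref{hyp: null border} and~\ref{hyp: getting fast to classes}. The conclusion then follows directly from that theorem.

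\emph{Semicontinuity and boundedness.} For $x\in\R$ we have $\rho(x,y)=\varphi(y-f(x))$. Since $f$ is continuous and $\varphi$ is lower semicontinuous, this composition is lower semicontinuous, and it is bounded by $\sup\varphi$. The extra point $\xinit$ is isolated in $\R\cup\{\xinit\}$, and $\rho(\xinit,\cdot)$ is the density of $\beta$. So we need $\beta$ to have a bounded lower semicontinuous density, which is implicit in the setting, as in the other examples.

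\emph{Null boundary.} By Proposition~\ref{prop: 1d example communicating classes}, $C=B=\{x\mid f(x)-x\in(-1,1)\}$. This set is open because $f$ is continuous. Its boundary is contained in $\{x\mid f(x)-x=\pm1\}$. The main obstacle is that this level set can have positive Lebesgue measure: for example, $f(x)=x+1$ on an interval. My plan is to handle Assumption~\ref{hyp: null border} by adding a mild genericity hypothesis, namely that the level sets $\{f(x)-x=\pm1\}$ are Lebesgue-null. Alternatively, one can argue that if $f(x)-x\geq 1$ on an interval, then that interval is not in $\overline C$ except at its endpoints, so only the genuine boundary points of $B$ matter. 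Since $\partial B$ consists of accumulation points of $B$ at which $f-\mathrm{id}=\pm1$, I would first try to show it is countable under monotonicity of $f$. If that fails, I would fall back on the genericity assumption.

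\emph{Getting fast to classes.} Let $\mu$ satisfy $\mu(C)<1$. We may assume $\mu\ll\leb$ by Proposition~\ref{prop: nonadmissible: abscont}. Using the null boundary, $\mu$ charges the open set $\R\setminus\overline C$. Every point there satisfies $|f(x)-x|>1$. Pick a bounded open interval $U$ with $\mu(U)>0$ and $\overline U\subseteq\{f(x)-x>1\}$; the case $f(x)-x<-1$ is symmetric. By Lemma~\ref{lem: dlp small implies similar support}, there are $\kappa,\delta$ with $\nu(U)\geq\kappa$ on $\Bc_\LP(\mu,\delta)$, so it suffices to show that $\frac1n\log\P(L_n(U)\geq\kappa)\to-\infty$. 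Inside $U$, every step moves strictly right, since $X_{n+1}>f(X_n)-1>X_n$. By the ratchet Lemma~\ref{lem: ratchet prop} applied at $a=\inf U$, once the chain has entered $[\inf U,\infty)$ it stays there.

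The remaining issue is that the chain may leave $U$ to the right and never return, because increments are positive while it is in $\{f-\mathrm{id}>1\}$ and the ratchet at $\sup U$ forbids coming back. Hence, along the visits to $U$, the chain behaves like an increasing chain. I would then repeat the argument of Proposition~\ref{prop: rorw}: the successive visits to $U$ form a run of consecutive times. Using the bound $\rho\leq\sup\varphi$ and comparison with i.i.d.\ uniform increments, the probability of $\kappa n$ visits to an interval of length $a\leq1/\sup\varphi$ decays superexponentially. A covering of $U$ by finitely many such short intervals finishes the argument, as in that proof.
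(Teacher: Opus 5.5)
Your diagnosis that Assumption~\ref{hyp: null border} is the crux is right. There is a genuine gap there, and it lies in the statement itself. The route you plan to try first cannot succeed: monotonicity of $f$ does not make $\partial B$ countable, nor even Lebesgue-null. Let $F\subseteq[0,1]$ be a fat Cantor set with $0,1\in F$, set $G=(0,1)\setminus F$ and $f(x)=x+1-\frac12 d(x,\R\setminus G)$. Since $d(\cdot,\R\setminus G)$ is $1$-Lipschitz, $f$ is continuous and strictly increasing. Moreover $f(x)-x\in[\frac34,1]$, with $f(x)-x<1$ exactly on $G$. Thus $B=G$, and by Proposition~\ref{prop: 1d example communicating classes} the communicating classes are the gaps of $F$. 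Hence $\partial C=F$ has positive Lebesgue measure, so Assumption~\ref{hyp: null border} fails. The first assertion of the proposition is therefore false for this $f$, and Theorem~\ref{theo: main result (simplified)} cannot be invoked. The paper's own one-line justification (``$C$ is a countable union of open intervals, so $\partial C$ is countable'') is exactly the invalid inference. So the extra hypothesis you keep in reserve is not optional. Your condition $\leb(\{f-\mathrm{id}=\pm1\})=0$ suffices. So does requiring the components of $B$ to be locally finite (e.g.\ finitely many classes, as in Figure~\ref{fig: example dynamic system}), which makes $\partial C$ discrete.

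Granting Assumption~\ref{hyp: null border}, your treatment of Assumption~\ref{hyp: getting fast to classes} works after one correction. Points of $\R\setminus\overline C$ satisfy only $|f(x)-x|\geq1$: your own example $f(x)=x+1$ shows equality can hold on an open set, and then $\{f-\mathrm{id}>1\}$ may be empty. So take, as the paper does, a bounded interval $U\subseteq\R\setminus\overline C$ with $\mu(U)>0$; by connectedness it lies in $\{f-\mathrm{id}\geq1\}$ or in $\{f-\mathrm{id}\leq-1\}$. Nothing is lost. Since $\varphi$ vanishes off $(-1,1)$, we still have $X_{n+1}>f(X_n)-1\geq X_n$ almost surely while $X_n\in U$, and Lemma~\ref{lem: ratchet prop} applies at $\inf U$ and $\sup U$ by continuity. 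From there your argument differs from the paper's only in packaging:
\begin{itemize}
\item you rerun the proof of Proposition~\ref{prop: rorw} directly on the single run of consecutive visits to $U$, which avoids any auxiliary chain;
\item the paper builds an auxiliary increasing chain $(Y_n)$ (following $X$ inside $U$, taking uniform steps outside), applies Proposition~\ref{prop: rorw} as a black box, and compares $L_n(U)\leq M_n(U)$, which reuses a stated result.
\end{itemize}
You are also right that the density of $\beta$ must be assumed bounded and lower semicontinuous. The paper's ``by definition of $\varphi$'' overlooks this.

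Finally, your reduction to $\mu\ll\leb$ (implicit in the paper as well) does not yield Assumption~\ref{hyp: getting fast to classes} as literally stated. Proposition~\ref{prop: nonadmissible: abscont} gives only $\overline s(\mu)=-\infty$, not a single neighborhood with superexponential decay. The assumption can genuinely fail: take $f=1$ on $[0,2]$, $f(x)=x+1$ for $x\leq0$ and $f(x)=x-1$ for $x\geq2$, so that $C=(0,2)$, and let $\beta$ charge every $(0,\eta)$. Then every neighborhood $V$ of $\delta_0$ satisfies $\liminf_n\frac1n\log\P(L_n\in V)>-\infty$. This is harmless for the main theorem, which uses Assumption~\ref{hyp: getting fast to classes} only for measures charging $\R\setminus\overline C$ (Proposition~\ref{prop: nonadmissible: support}). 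For those measures your argument needs no absolute continuity, because Lemma~\ref{lem: dlp small implies similar support} holds for every $\mu$ by inner regularity.
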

	\begin{proof}
	The function $\rho$ is lower semicontinuous and bounded on $(\R\cup\{ \xinit\})\times \R$ by definition of $\varphi$.
	Since $C$ is the countable union of open intervals, $\partial C$ is countable thus~\ref{hyp: null border} is satisfied. 
	It remains to prove~\ref{hyp: getting fast to classes}.
	
	Let $\mu\in \Pc(\R)$ be absolutely continuous with respect to the Lebesgue measure and be such that there exists an open bounded interval $U\subseteq \R\setminus \overline C$ satisfying $\mu(U)>0$. We seek a neighborhood of $\mu$ satisfying~\eqref{eq: getting fast to classes}. 	
	Since $f$ is continuous, either $U\subseteq \{x\in \R\ |\ f(x)-x\geq1\}$ or $U\subseteq \{x\in \R\ |\ f(x)-x\leq-1\}$.
	Since the second case mirrors the first case, we only discuss the first case.
 	By Lemma~\ref{lem: dlp small implies similar support}, there exist $\kappa>0$ and $\delta>0$ such that $\{L_n\in \Bc_\LP (\mu, \delta)\}\subseteq \{L_n(U)\geq\kappa\}$. Let us prove that $\Bc_\LP(\mu, \delta)$ satisfies~\eqref{eq: getting fast to classes}.
		Let $T=\inf \{n\in \N\ |\ X_n \in U\}$.
		Since $\{L_n\in \Bc_\LP (\mu, \delta)\}\subseteq \{T<\infty\}$, we can assume that $T<\infty$.
		Let $(Y_n)$ be the Markov chain defined by
		\begin{equation*}
			Y_1=X_T,\qquad
			Y_{k+1}=
			\begin{cases}
				X_{T+k},\quad &\hbox{if $Y_k\in U$};
				\\Y_k+\xi_{k+1},\quad &\hbox{otherwise},
			\end{cases}
		\end{equation*}
		where $(\xi_{k})$ is an i.i.d.~sequence of uniform law on $(0,1)$ independent of $(X_n)$. 
		The Markov chain $(Y_n)$ satisfies the hypothesis of Proposition~\ref{prop: rorw}, thus 
		\begin{equation}
			\label{eqloc: Mk(U)}
			\lim_{k\to \infty}\frac 1k\log \P(M_k(U)\geq \kappa)=-\infty,
		\end{equation}
		where $M_k$ denotes the empirical measure of $(Y_k)$ up to time $k$. We now compare $L_n(U)$ and $M_n(U)$.
		Let $T'=\inf \{n\geq T\ |\ X_n \notin U\}$. We have $\{T'=\infty\}\subseteq \{\forall k\in \N,\ M_k(U)=1\}$ thus $T'$ is almost surely finite.
		By Lemma~\ref{lem: ratchet prop},
		we have $\tilde p(X_{T'},U)=0$,
		 implying that the times $n\in\N $ such that $X_n\in U$ are exactly the times $n\in \{T,\ldots, T'-1\}$.
		 Therefore, we always have $L_n(U)\leq M_n(U)$.
		Hence,
		\begin{align*}
			\P(L_{n}\in \Bc_\LP(\mu, \delta))
			&\leq \P(L_{n}(U)\geq \kappa)
			\leq \P(M_{n}(U)\geq \kappa).
		\end{align*}
		By~\eqref{eqloc: Mk(U)}, the neighborhood $\Bc_\LP(\mu, \delta)$ satisfies~\eqref{eq: getting fast to classes}.
	Assumption~\ref{hyp: getting fast to classes} is satisfied.
	By Theorem~\ref{theo: main result (simplified)}, $(L_n)$ satisfies the weak LDP with rate function satisfying the conclusions of Theorem~\ref{theo: main result}.
\end{proof}
\end{example}
In the special case $f=\mathrm{id}$, Example~\ref{ex: one dimensional} describes a random walk on $\R$ with bounded increments. Proposition~\ref{prop: 1d example communicating classes} shows that there is only one communicating class, which is the whole state space. Notice that neither (H*) of \cite{DV3} nor~\ref{assumption U} is satisfied. However, Theorem~\ref{theo: main result} applies.
\section*{Acknowledgments}
The author would like to express his gratitude to Noé Cuneo and Armen Shirikyan for valuable discussions and advice.
The author would also like to thank Pierre Petit for a helpful suggestion on an example.
\bibliographystyle{unsrt}
\bibliography{bf.bib}
\end{document}